\numberwithin{equation}{section} 
\newcommand{\RomanNumeralCaps}[1]
 {\MakeUppercase{\romannumeral #1}}
 \newcommand{\R}{{ I\!\!R}}
 \newcommand{\N}{{I\!\!N}}
 \newcommand{\Z}{\mathbb Z}
 \renewcommand{\H}{{\mathcal H}}
\newtheorem{theorem}{Theorem}[section]
\newtheorem{lemma}[theorem]{Lemma}
\newtheorem{proposition}[theorem]{Proposition}
\newtheorem{remark}[theorem]{Remark}
\newcommand{\tendsto}[1]{\renewcommand{\arraystretch}{0.5}
\begin{array}[t]{c}
\longrightarrow \\
{ \scriptstyle #1 }
\end{array}
\renewcommand{\arraystretch}{1}}
\newcommand{\weaktendsto}[1]{\renewcommand{\arraystretch}{0.5}
\begin{array}[t]{c}
\rightharpoonup \\
{ \scriptstyle #1 }
\end{array}
\renewcommand{\arraystretch}{1}}
\date{}
\author[1]{May ABDALLAH 
\thanks{May\textunderscore{}i\textunderscore{}abdallah@outlook.com}}
\author[2]{Mohamad DARWICH \thanks{Mohamad.Darwich@lmpt.univ-tours.fr}}
\author[3] {Luc MOLINET \thanks{Luc.Molinet@univ-tours.fr}}
\affil[1,3] {Institut Denis Poisson, Universit\'e de Tours, Universit\'e d'Orl\'eans, CNRS, Parc Grandmont, 37200 Tours, France}
\affil[1,2] {Laboratory of Mathematics-EDST, Department of Mathematics, Faculty of Sciences \RomanNumeralCaps{1}, Lebanese University, Beirut, Lebanon}
\begin{document}
\date{ }
\title{\bf Asymptotic stability of fast solitary waves to  the  Benjamin Equation}
\maketitle



\begin{abstract}
We prove the asymptotic stability of the high speed solitary waves to the Benjamin equation.  This is done by establishing a Liouville property  for the nonlinear evolution of the Benjamin equation around  these  solitary waves. To do this, inspired by \cite{Kenig Martel Robbiano 2011}, we make use of the KdV limit of the Benjamin equation together with known rigidity property of the KdV flow. The main difficulties are linked to the presence of the non-local operator $ \H $ as well as the non-positivity of the quadratic part of the energy in the case $ \gamma<0$ which is the physical case.
\end{abstract}
\smallskip
\noindent  {\footnotesize \textsc{Keywords}:  Benjamin equation, Asymptotic stability}.

\noindent {\footnotesize \textsc{AMS Subject Classifications (2020)}: 35B40, 35B53, 35Q53, 35Q35.}
\section{Introduction}\label{Intro3}
The Benjamin equation reads
\begin{equation}\label{MainEq}
\partial_t u + \ \partial_x^3 u  +  \gamma \, \H  \partial_x^2 u + u \partial_x u = 0
\end{equation}
where $ \gamma\in \R^* $, and $\H$ denotes the Hilbert transform defined as  the Fourier multiplier by $ i \xi \,\text{sgn}(\xi) $.  Note that with this definition $ \H u_x=-D_x u $ where $ D_x $ is the Fourier multiplier by $ |\xi| $.

This equation, with $ \gamma<0 $,  has been derived in \cite{Benjamin 1992} 
(see also  \cite{Benjamin 1996} and \cite{ABR}) as an asymptotic model for  the propagation    of long interfacial waves when the surface tension cannot be safely ignored. In this context,  $ u $ represents the vertical displacement of the interface between the two fluids. 

It is worth noticing that it reduces to the Korteweg-de Vries (KdV) equation
$$ u_t +  u_{xxx} +  u u_x =0$$
when $ \gamma=0 $. However,  whereas  the KdV  equations is a completely integrable systems,  the Benjamin equation is not known to be integrable. 

The existence, non existence, uniqueness  and orbital stability of solitary waves to \eqref{MainEq} has been studied in numerous papers (\cite{Benjamin 1996}, \cite{Benett}, \cite{Angulo},  \cite{Klein et al 2023}, \cite{ADM1}, \cite{HSS24}). To summarize,  solitary waves are known to exist for $ c>0 $ when $ \gamma>0 $ and for $ c>\gamma^2/4 $ when $ \gamma<0 $ whereas non existence is proven for $ c\le 0 $ when $ \gamma>0 $ and for $ c\le -\gamma^2/5 $ when $ \gamma<0 $. In particular, the existence or non existence of solitary waves for $ c\in ]-\gamma^2/5,\gamma^2/4] $ when $ \gamma<0 $ still seems to be an open question. The orbital stability and a kind  of uniqueness result of the solitary wave is proven for  $ c>0$ large  with respect to $\gamma^2 $  without any sign condition on $ \gamma $ and for $ c>0 $ small enough with respect to $\gamma^2 $ when $ \gamma>0 $. In the rest of the range of existence, the orbital stability of the set of ground states is proved. 

In this paper, our aim is to prove that fast enough solitary waves are actually asymptotically stable in the energy space $ H^1(\mathbb{R}) $ without any sign condition on $ \gamma $. It is well known that the Cauchy problem associated with \eqref{MainEq} is globally well-posed in $H^1(\mathbb{R})$ (see \cite{Kenig Ponce Vega 1991}) and even in $ H^s(\mathbb{R}) $ for $s\ge 0$ (see \cite{linares}). 
 We recall that \eqref{MainEq} enjoys the following conservation laws
$$ \text{Energy:} \quad E(u(t)) = \int_{\mathbb{R}} \bigg(  \frac{1}{2} (\partial_x u)^2+ \frac{\gamma}{2} (D_x^{\frac{1}{2}} u )^2 - \frac{1}{6} u^3 \bigg) dx \quad  = E(u(0)),$$
and 
$$ \text{Mass:} \quad M(u(t)) = \frac{1}{2} \ \int_{\mathbb{R}} u^2 dx \quad = M(u(0)).$$
Note, in particular, that the quadratic part of the energy is not positively defined when $ \gamma<0 $. 
A solitary wave solution to \eqref{MainEq} is any traveling wave solution of the form
$$ u(t,x)= \varphi_{\gamma,c}(x - x_0 - c t)   \in C(\R;H^1(\R)) $$
where $c>0, \ x_0 \in \mathbb{R}$, and $\varphi_{\gamma,c} \in H^1(\R) $ solves
\begin{equation}\label{eqQ}
 c \ \varphi_{\gamma,c}- \ \varphi_{\gamma,c}^{''} - \gamma \ \H \varphi_{\gamma,c}^{'} - \frac{1}{2} \varphi_{\gamma,c}^2 =0 \ .
 \end{equation}
 In \cite{Benett} and later in \cite{ADM1}, it is proven that there exists $ r_2>0 $ such that for $ \gamma\in \R^* $ and $ c>r_2 \gamma^2 $, \eqref{eqQ} has a solution $ \varphi_{\gamma,c}\in H^\infty(\R)$ 
 that is orbitally stable in $ H^1(\R) $ and unique in some sense. More precisely, the uniqueness in \cite{Benett}, inducted  by the Implicit Function Theorem, is a uniqueness in an $H^1_e(\R) $\footnote{ $H^1_e(\R) $ stand for the closed subspace of $ H^1(\R) $ composed of even function} neighborhood of the soliton of speed $ c$ of the KdV equation whereas the uniqueness in \cite{ADM1} is a uniqueness among the even ground state solutions to \eqref{eqQ}, that is the solutions to \eqref{eqQ} that moreover
   solves the constraint minimization problem  
   $$ \inf \bigg\{ \frac{1}{2} \int_{\mathbb{R}} ( \varphi_x^2+\gamma (D_x^{\frac{1}{2}} \varphi)^2+c \varphi^2  ) dx\; ; \varphi \in H^1_e(\R) \; \text{such  that} \int_{\R} \varphi^3 = \lambda \bigg\} $$ for some $ \lambda>0$. However, it is quite easy to prove that at least for $ r_2>0 $ large enough these two solutions do coincide (see Proposition \ref{der} in Section \ref{subsectionsmooth}). In this work, we denote by $ Q_{\gamma,c}\in H^\infty(\R) $ this even smooth solution to \eqref{eqQ}.
    \begin{remark}
It is worth noticing that, in sharp contrast to the KdV or Benjamin-Ono soliton, numerical simulations in \cite{Klein et al 2023} seem to indicate that for $ \gamma<0 $, $ Q_{\gamma,c} $ has a negative part even for $ \gamma $ close to $0$.  This is due to the competition between the third and the second order operator in the linear part of the Benjamin equation.  

\end{remark}
 Our first result is a nonlinear Liouville property for  global solutions to \eqref{MainEq} with a uniform in time $ H^1$-decay  (up to space translation) in an $H^1$-neighborhood of $ Q_{\gamma,1} $  when $ |\gamma| $ small enough.

\begin{theorem}(Nonlinear Liouville Property)\label{NL}
There exists $ \varepsilon_0, \gamma_0>0 $ such that for any $ |\gamma|< \gamma_0$, any solution $u \in C(\mathbb{R}; H^1(\mathbb{R})) \cap L^{\infty}(\mathbb{R}, H^1(\mathbb{R}))$ of
\begin{equation}\label{eqbeta}
\partial_t u + u_{3x} +\gamma \H u_{xx} + u u_x = 0
\end{equation}
that satisfies 
\begin{enumerate}
\item there exists $ x_0 \, :\, \R \to \R $ such that 
\begin{equation}\label{hyp1}
\sup_{t\in\R} \|u(t)-Q_{\gamma,1}(\cdot-x_0(t))\|_{H^1} < \varepsilon_0 \ ,
\end{equation}
\item $ \forall \delta>0 $, $ \exists R>0 $ such that 
\begin{equation}\label{hyp2}
\forall t\in \R , \quad \int_{|x-x_0(t)|>R} u^2(t,x)+u_x^2(t,x) \, dx < \delta 
\end{equation}
\end{enumerate}
is of the form 
$$
u(t,\cdot)= Q_{\gamma,c}(\cdot-y-ct)) 
$$
 for some $ y\in \R $ and $ c>0 $ with $ |c-1|\lesssim \varepsilon $.
\end{theorem}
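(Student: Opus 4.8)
The plan is to follow the rigidity scheme of Martel--Merle for the KdV equation, but since the KdV virial computation does not survive the non-local term, I would replace it by a limiting argument $\gamma\to0$ that reduces the statement to the known rigidity of the KdV flow, in the spirit of \cite{Kenig Martel Robbiano 2011}. By Proposition~\ref{der} the profiles $Q_{\gamma,c}$ depend smoothly on $(\gamma,c)$ with values in $H^\infty(\R)$, are exponentially localized uniformly in $(\gamma,c)$ near $(0,1)$ for $|\gamma|$ small, and $Q_{\gamma,c}\to Q_{0,c}$ (the KdV soliton of speed $c$) as $\gamma\to0$. Using \eqref{hyp1} and the implicit function theorem I would first write, for $\varepsilon_0$ small,
\[
u(t,\cdot)=Q_{\gamma,c(t)}(\cdot-\tilde x_0(t))+\eta(t,\cdot),
\]
with $C^1$ parameters satisfying $\sup_t(\|\eta(t)\|_{H^1}+|c(t)-1|)\lesssim\varepsilon_0$ and the orthogonality conditions $\eta(t)\perp \partial_x Q_{\gamma,c(t)}(\cdot-\tilde x_0(t))$ and $\eta(t)\perp (\partial_c Q_{\gamma,c})|_{c=c(t)}(\cdot-\tilde x_0(t))$. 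Inserting this in \eqref{eqbeta} and projecting gives the equation of $\eta$, schematically
\[
\partial_t\eta=-\partial_x\bigl(L_{\gamma,c(t)}\eta\bigr)+\dot c\,(\partial_cQ_{\gamma,c})|_{c=c(t)}+(\dot{\tilde x}_0-c)\,\partial_x(Q_{\gamma,c(t)}+\eta)+\tfrac12\partial_x(\eta^2),
\]
with the modulation estimate $|\dot c(t)|+|\dot{\tilde x}_0(t)-c(t)|\lesssim \int_\R\eta^2(t,x)\,e^{-|x-\tilde x_0(t)|}\,dx$, where $L_{\gamma,c}=c-\partial_x^2+\gamma D_x-Q_{\gamma,c}$. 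For $|\gamma|$ small the spectrum of $L_{\gamma,c}$ (one simple negative eigenvalue, kernel spanned by $\partial_xQ_{\gamma,c}$, a gap above $0$) is obtained by perturbation from the classical KdV operator $L_{0,c}=c-\partial_x^2-Q_{0,c}$; together with the two orthogonality conditions this yields the coercivity $\langle L_{\gamma,c(t)}\eta(t),\eta(t)\rangle\gtrsim\|\eta(t)\|_{H^1}^2$, uniformly in $|\gamma|$ small. This is where the indefiniteness of the quadratic part of $E$ for $\gamma<0$ is bypassed.

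\emph{Monotonicity and local smoothing.} Next, exploiting \eqref{hyp2} together with the conservation of mass and energy, I would establish the almost-monotonicity of the localized mass (and of $E+cM$) on the right of the soliton; the non-local term $\gamma\H\partial_x^2 u$ is controlled through commutator bounds for the Hilbert transform, and it is precisely here that the smallness of $\gamma$ is used to keep these contributions subordinate. This produces uniform-in-time control of weighted norms of $\eta$ and, most importantly, a Kato-type local smoothing bound
\[
\int_{\R}\int_{\R}\bigl(\eta_x^2+\eta^2\bigr)(t,x)\,e^{-|x-\tilde x_0(t)|}\,dx\,dt\ \lesssim\ \sup_{t\in\R}\|\eta(t)\|_{H^1}^2 ,
\]
with all implied constants uniform for $|\gamma|$ small. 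Being quadratic in $\eta$, this bound is compatible with the renormalization performed below.

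\emph{Reduction to KdV rigidity.} Arguing by contradiction, suppose the theorem fails; then there are $\gamma_n\to0$, $\varepsilon_n\to0$, and solutions $u_n$ of \eqref{eqbeta} with $\gamma=\gamma_n$ satisfying \eqref{hyp1}--\eqref{hyp2} with $\varepsilon_n$, but with remainders $\eta_n\not\equiv0$. Set $a_n=\sup_t\|\eta_n(t)\|_{H^1}\le C\varepsilon_n\to0$, choose $t_n$ with $\|\eta_n(t_n)\|_{H^1}\ge a_n/2$, and let $w_n(t,x)=a_n^{-1}\eta_n\bigl(t+t_n,\,x+\tilde x_{0,n}(t+t_n)\bigr)$. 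Then $\|w_n\|_{L^\infty_tH^1_x}=1$, $\|w_n(0)\|_{H^1}\ge1/2$, $w_n$ obeys the two orthogonality conditions and the uniform spatial localization and local smoothing bound above, and it solves $\partial_tw_n=-\partial_x\bigl(L_{\gamma_n,c_n}w_n\bigr)+b_n\cdot(\text{translation and scaling directions})+o(1)$, the $o(1)$ gathering the renormalized nonlinearity $\tfrac{a_n}{2}\partial_x(w_n^2)$ and the parts of the modulation terms of size $O(a_n)$. Using the local smoothing bound and the equation to upgrade the weak-$H^1$ convergence to strong convergence in $C_{\mathrm{loc}}(\R_t;L^2_{\mathrm{loc}}(\R_x))$ — the non-local term being a bounded perturbation of Airy — I would pass to the limit: since $\gamma_n\to0$, $c_n\to1$ and $a_n\to0$, $w_n$ converges to some $w_\infty$ with $\|w_\infty(0)\|_{H^1}\ge1/2$, uniformly bounded, spatially localized, orthogonal to $\partial_xQ_{0,1}$ and $(\partial_cQ_{0,c})|_{c=1}$, and solving the linearized KdV equation around the speed-$1$ soliton (together with its own modulation terms). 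By the known rigidity property of the KdV flow this forces $w_\infty\equiv0$, a contradiction. Hence $\eta\equiv0$; substituting $u(t)=Q_{\gamma,c(t)}(\cdot-\tilde x_0(t))$ into \eqref{eqbeta} and using \eqref{eqQ} gives $\dot c\equiv0$ and $\dot{\tilde x}_0\equiv c$, so $u(t,\cdot)=Q_{\gamma,c}(\cdot-y-ct)$ with $|c-1|\lesssim\varepsilon_0$, as claimed.

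\emph{Main obstacle.} The delicate point is the non-local operator $\gamma\H\partial_x^2$. On the one hand it rules out a direct virial identity for the Benjamin equation, since the commutator of $\H\partial_x^2$ with the spatial cut-offs is itself a non-local first order operator rather than a harmless multiplication operator as in the KdV case; this is the structural reason for routing the proof through the $\gamma\to0$ limit rather than through a Benjamin virial identity. On the other hand, every estimate entering the argument — the modulation and the coercivity of $L_{\gamma,c}$, the monotonicity and the local smoothing bound, and the strong local compactness needed to extract $w_\infty$ — must be made quantitatively uniform in $\gamma$ near $0$, which forces one to track the Hilbert-transform commutators carefully and to combine the non-positivity issue for $\gamma<0$ with the perturbative control by the KdV operator. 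The compactness step itself rests on the Kato smoothing bound, whose proof in the presence of $\H$ is the main technical hurdle.
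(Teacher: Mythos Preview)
Your overall scheme --- contradiction, renormalization of the remainder, passage to the limit $\gamma\to0$, and reduction to the linear Liouville property of KdV --- is exactly the paper's. Two points, however, are genuine gaps rather than details.

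\medskip

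\textbf{1. The global Kato smoothing estimate.} You claim
\[
\int_{\R}\int_{\R}\bigl(\eta_x^2+\eta^2\bigr)(t,x)\,e^{-|x-\tilde x_0(t)|}\,dx\,dt\ \lesssim\ \sup_{t\in\R}\|\eta(t)\|_{H^1}^2
\]
with constants uniform in $\gamma$. This is precisely what the paper cannot obtain. The non-local commutators $[\H,\psi]$ are not localized, and the monotonicity identities for mass and for $E+cM$ only close when the cut-off expands sub-linearly, at rate $t^{3/4}$; this yields algebraic decay $R^{-1/4}$ for the normalized remainder (the paper's Proposition~\ref{propdecay}), not exponential. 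A global-in-time exponentially weighted smoothing bound would require decay strong enough to make the time integral converge, and $R^{-1/4}$ does not suffice. Your proposal does not explain how to overcome this; the paper explicitly flags it as the reason the argument must be run at the $H^1$ level with a different endgame.

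\medskip

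\textbf{2. The final contradiction.} From weak-$H^1$ convergence and strong convergence in $C_{\mathrm{loc}}(\R_t;L^2_{\mathrm{loc}})$ you cannot conclude $\|w_\infty(0)\|_{H^1}\ge\tfrac12$: the derivative part of the $H^1$ norm can drop under weak limits, so $w_\infty\equiv0$ is perfectly compatible with $\|w_n(0)\|_{H^1}\ge\tfrac12$ for all $n$. The paper closes this as follows. Once $\tilde w\equiv0$ is known, one has $\tilde w_n\to0$ in $C([-1,1];L^2)$ (using the uniform $R^{-1/4}$ decay to globalize the $L^2_{\mathrm{loc}}$ convergence). A \emph{local-in-time} Kato identity on $[0,\delta]$ then gives
\[
\int_0^\delta\int_{-R}^R(\partial_x\tilde w_n)^2\ \longrightarrow\ 0,
\]
so there exist $t_{n,\delta}\in[0,\delta]$ with $\|\partial_x\tilde w_n(t_{n,\delta})\|_{L^2}<\tfrac18$ for $n$ large. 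Independently, one shows $\bigl|\frac{d}{dt}E_n(\tilde w_n)\bigr|\lesssim1$ uniformly in $n$ (here $E_n$ is the rescaled Benjamin energy), which propagates the smallness from $t_{n,\delta}$ back to $t=0$: for $\delta$ small, $\|\tilde w_n(0)\|_{H^1}^2\le\tfrac18+C\delta+o(1)$, contradicting $\|\tilde w_n(0)\|_{H^1}\ge\tfrac12$. This two-step device (local Kato plus energy-derivative control) is the missing ingredient in your argument.

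\medskip

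A minor remark: you modulate with a time-dependent speed $c(t)$ and two orthogonality conditions, whereas the paper fixes $c_n$ once and for all by matching $\|Q_{\gamma_n,c_n}\|_{L^2}=\|u_{0,n}\|_{L^2}$, which produces the second (approximate) orthogonality $\bigl|\int\eta_n Q_{\gamma_n,c_n}\bigr|\lesssim\|\eta_n\|_{L^2}^2$ for free via mass conservation. Either route is viable; the paper's choice avoids tracking $\dot c$.
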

As a consequence of this nonlinear Liouville theorem, we obtain the following asymptotic stability result for fast enough even solitary waves to the Benjamin equation \eqref{MainEq}.
\begin{theorem}\label{Main}
There exists $ K>0 $ such that for any $ \gamma\in \R^* $, the  even  solitary wave $ Q_{\gamma,c}$ to \eqref{MainEq} (constructed in \cite{Benett} or \cite{ADM1}) with speed $ c>K \gamma^2 $ is $ H^1$-asymptotically stable. More precisely, for any such $ c $ there exists $ \varepsilon>0 $ such that if 
$$
\|u_0-Q_{\gamma,c}\|_{H^1} < \varepsilon 
$$
then there exist $c^* $ with  $ |c-c^*|\ll c $ and  a \ $ C^1$-function $ x :\, \R \to \R $ \ with $ \lim_{x\to+\infty} \dot{x}(t)=c^* $ such that the unique solution $ u\in C(\R;H^1(\R)) $ to \eqref{MainEq} satisfies 
$$
u(t,\cdot+x(t)) \weaktendsto{t\to +\infty} Q_{\gamma,c^*} \quad \text{in} \; H^1(\R)  \; .
$$
Moreover, 
$$
\lim_{t\to+\infty} \|u(t)-Q_{\gamma,c^*}(\cdot-x(t))\|_{H^1(]\frac{c\, t}{2},+\infty[)} =0 .
$$
In particular, $u(t)-Q_{\gamma,c^*}(\cdot-x(t)) $ converges uniformly to $0 $ on $ ]\frac{c \, t}{2},+\infty[$.
\end{theorem}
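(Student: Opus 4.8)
The plan is to derive Theorem~\ref{Main} from the nonlinear Liouville property of Theorem~\ref{NL} by the now-classical scheme of Martel--Merle, combining (i) orbital stability, (ii) the modulation / decomposition of the solution around the soliton family, (iii) a monotonicity (virial / almost-monotonicity of the mass and energy on the right) argument yielding uniform localization of the asymptotic object, and (iv) a compactness argument producing a limit profile to which the Liouville theorem applies. Throughout we work with $\gamma$ fixed and, after the scaling $x\mapsto \sqrt c\,x$, $t\mapsto c^{3/2} t$, $u\mapsto c\,u$, we may reduce to $c=1$ with $\gamma$ replaced by $\gamma/\sqrt c$, so that $c>K\gamma^2$ becomes the smallness condition $|\gamma|<\gamma_0$ needed in Theorem~\ref{NL}; this is the reason the Liouville statement is only for speed close to $1$ and small $|\gamma|$.

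\medskip

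First, I would record the \emph{orbital stability} of $Q_{\gamma,c}$ (from \cite{Benett},\cite{ADM1}) in the quantitative form: for $\varepsilon$ small there exist a $C^1$ shift $x(t)$ and a $C^1$ speed parameter $c(t)$ with $|c(t)-c|\ll c$ and
$$
\sup_{t\ge 0}\|u(t,\cdot+x(t))-Q_{\gamma,c(t)}\|_{H^1}\lesssim \varepsilon,
$$
obtained by modulation: choosing $x(t),c(t)$ so that the remainder $\eta(t):=u(t,\cdot+x(t))-Q_{\gamma,c(t)}$ is orthogonal to $\partial_x Q_{\gamma,c(t)}$ and to $\partial_c Q_{\gamma,c(t)}$ in $L^2$, which is possible by the implicit function theorem since the relevant Gram matrix is invertible (this uses nondegeneracy of the linearized operator around $Q_{\gamma,c}$, valid for $c$ large with respect to $\gamma^2$, and the smooth dependence from Proposition~\ref{der}). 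The modulation equations then give $|\dot x(t)-c|\lesssim \|\eta(t)\|_{H^1}$ and $|\dot c(t)|\lesssim \|\eta(t)\|_{H^1}^2$, so in particular $c(t)$ has bounded variation on $[0,\infty)$ and converges to some $c^*$ with $|c^*-c|\ll c$, and $\dot x(t)\to c^*$.

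\medskip

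Second, the core is a \emph{monotonicity} argument: one shows that a suitable portion of the mass (and, combined with the conserved energy, of the $H^1$-norm) to the right of a point moving at speed slightly below $c/2$ is almost non-increasing. Concretely, with a smooth cutoff $\psi$, $0\le\psi\le1$, $\psi'\ge0$, $\psi=0$ on $(-\infty,-1)$, $\psi=1$ on $(1,\infty)$, and exponential tails, one studies $I_{x_1}(t)=\int u^2(t,x)\,\psi(x-x_1-\tfrac{c}{2}t)\,dx$ and its $H^1$-analogue built from the energy density; the dispersive term $\partial_x^3$ gives a favorable sign for right-moving weights (as for KdV), the nonlocal term $\gamma\H\partial_x^2$ contributes an error controlled using that $\gamma$ is small and that $\H$ is bounded on $L^2$ plus commutator estimates for $[\H,\psi]$, and the nonlinearity is controlled by the $L^\infty$-smallness of $\eta$ on the region where $\psi'$ lives. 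Summing over a sequence of such points and letting them go to $-\infty$ yields the uniform localization estimate (the analogue of hypothesis~\eqref{hyp2}) for the rescaled solution $v(t,x)=u(t,x+x(t))$, together with the convergence on $]\tfrac{ct}{2},+\infty[$ claimed at the end of Theorem~\ref{Main} once the limit is identified.

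\medskip

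Third, \emph{compactness and rigidity}: using the uniform $H^1$-bound and the uniform localization just obtained, the sequence $v(t_n,\cdot)$ for $t_n\to+\infty$ is precompact in $H^1$ (localized $L^2$-compactness plus the energy bound), so a subsequence converges strongly to some $v_\infty\in H^1$ close to $Q_{\gamma,1}$. Running the Benjamin flow from $v_\infty$ produces a global solution $\tilde u$ which, by continuity of the flow and the uniform-in-time estimates, again satisfies the uniform $H^1$-closeness \eqref{hyp1} to the soliton family and the localization \eqref{hyp2}; hence Theorem~\ref{NL} forces $\tilde u(t)=Q_{\gamma,c_\infty}(\cdot-y-c_\infty t)$ for some $c_\infty,y$, and by $L^2$-conservation and the orthogonality conditions carried to the limit one gets $c_\infty=c^*$ and $y=0$, i.e. $v_\infty=Q_{\gamma,c^*}$. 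Since the limit is the same along every subsequence, $v(t)=u(t,\cdot+x(t))\rightharpoonup Q_{\gamma,c^*}$ weakly in $H^1$ as $t\to+\infty$ (weak, not strong, because a small amount of mass is radiated to the left and escapes any fixed window). The last two displays of the theorem then follow: the $H^1$-convergence on $]\tfrac{ct}{2},+\infty[$ is exactly the content of the monotonicity step applied to $\eta$, and uniform convergence to $0$ there is Sobolev embedding.

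\medskip

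The main obstacle is the monotonicity step under the two structural difficulties flagged in the abstract. The nonlocal operator $\gamma\H\partial_x^2$ does not localize well: when one integrates by parts against the moving weight $\psi$, the term $\gamma\int (\H u_x) u\,\psi' $ and the commutator $[\H,\psi]$ are only borderline controllable, and one must exploit in an essential way that after rescaling $\gamma$ is small (proportional to $c^{-1/2}$) to absorb these into the good terms; this is precisely why the result is restricted to fast solitary waves. The second difficulty is that for $\gamma<0$ the quadratic form in the energy, $\tfrac12\int u_x^2+\tfrac{\gamma}{2}\int (D_x^{1/2}u)^2$, is not positive, so the energy alone does not control $\|u_x\|_{L^2}$ on the right; one repairs this by adding a multiple of the localized mass (the $D_x^{1/2}$ term is lower order and, on the relevant frequency range, dominated by $\|u_x\|_{L^2}+\|u\|_{L^2}$ with a small constant when $|\gamma|$ is small), so that an appropriate combination $\mathcal{F}_{x_1}(t)=E\text{-density localized}+\theta\, I_{x_1}(t)$ is both coercive in $H^1$ on the right and almost non-increasing. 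Carrying both fixes simultaneously through the cutoff computations, while keeping all constants uniform in the (large) speed, is the technical heart of the argument; the compactness and rigidity steps are then standard given Theorem~\ref{NL}.
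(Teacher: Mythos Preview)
Your overall scheme is correct and matches the paper's strategy, but there is one genuine gap and one technical mis-step.

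\medskip

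\textbf{The gap.} You write that the monotonicity argument, ``summing over a sequence of such points and letting them go to $-\infty$'', yields the uniform localization \eqref{hyp2} for $v(t,\cdot)=u(t,\cdot+x(t))$, and hence precompactness of $\{v(t_n)\}$ in $H^1$. This is not true: the original solution does \emph{not} satisfy \eqref{hyp2}, because mass and energy radiate to the left and never come back. The monotonicity only controls the \emph{right} tail of $v$; the left tail of $v$ is in general of order $\varepsilon$ uniformly in $t$ and does not decay. What one must show is that the \emph{weak limit} $\tilde u$ (obtained along a subsequence via weak continuity of the flow) satisfies \eqref{hyp2}. Right-decay of $\tilde u$ is inherited from the right-monotonicity of $u$. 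Left-decay of $\tilde u$ is a separate step: the paper proves it by contradiction, comparing the localized functionals $H^{R,r}$ and $H^{R,l}$ (built from $\theta_0 M+E$) along the sequence $t_{n_k}$, and uses a local Kato smoothing effect to produce times $\tau_{n_k}$ at which $u(t_{n_k}+\tau_{n_k},\cdot+x(\cdot))$ is close to $\tilde u(\tau_{n_k},\cdot+\tilde x(\cdot))$ in $H^1$ on half-lines $[-R,\infty)$, which transfers the almost-monotonicity information and forces a contradiction. You do not mention Kato smoothing at all; without it you cannot close the compactness step, and the same device is needed a second time at the end to upgrade the $L^2$ convergence on $]ct/2,\infty[$ to $H^1$.

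\medskip

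\textbf{The technical mis-step.} Your monotonicity sketch uses a fixed cutoff $\psi$ with ``exponential tails''. With the nonlocal term $\gamma\H\partial_x^2$, the relevant commutators $[\H,\psi]$ and $\partial_x[\H,\psi]\partial_x$ only obey polynomial bounds (cf.\ \eqref{comut}--\eqref{besov1}); integrating them in time against a fixed weight diverges. The paper follows \cite{MMT} and \cite{Gustafson et al. 2009}: the cutoff is rescaled so that its transition zone widens like $(R+|t_0-t|)^{3/4}$, which makes the commutator errors integrable and yields decay $R^{-1/4}$ rather than exponential. This is why the paper repeatedly emphasizes ``non-optimal'' polynomial decay for compact solutions; your exponential-tail picture does not survive the Hilbert transform.

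\medskip

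\textbf{A smaller difference.} You modulate with two parameters $(x(t),c(t))$ and define $c^*=\lim c(t)$ via $|\dot c|\lesssim\|\eta\|^2$. The paper instead fixes $c_*$ a priori by $\|Q_{\gamma,c_*}\|_{L^2}^2=\limsup_{t\to\infty}\|u(t)\|_{L^2(x>t/2)}^2$ and modulates only in translation with the single orthogonality $\int\eta\,Q'_{\gamma,c_*}=0$. Both are legitimate, but your route requires the Gram matrix $\bigl(\langle\partial_x Q,\partial_x Q\rangle,\langle\partial_x Q,\partial_c Q\rangle;\ldots\bigr)$ to be nondegenerate and the estimate $|\dot c|\lesssim\|\eta\|^2$, which you have not justified here; the paper's choice avoids this and makes the identification $c_\infty=c_*$ at the end a direct $L^2$ argument.
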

To prove our result, we start by establishing almost monotonicity estimates based on the conservation of the mass  and the energy.  These estimates are technically complicated here due to the non-local nature of the operator $ \H $ that can moreover be assigned with a positive or a negative sign. To overcome this difficulty, we follow the approach introduced in \cite{MMT} (see also \cite{Gustafson et al. 2009} where an almost  monotonicity result associated to the $ L^2$-norm for the Benjamin-Ono equation is proved) by  using  cut-off functions whose supports expand sub-linearly at the rate $O(t^{3/4})$. We will particularly need some commutator estimates  including one that improves a similar estimate in  \cite{Gustafson et al. 2009}. With these monotonicity results in hand, we are able to prove that any $ H^1$-compact global solution to \eqref{MainEq} has an $ H^1$-norm that decays at least as  $ \, |x|^{-1/4} $. At this stage, it is worth noticing that unfortunately our monotonicity result does not enable us to get the  optimal decay rate for  $ L^2$-compact global solutions. 

Then, we prove the nonlinear Liouville property for $ |\gamma|$ small enough  by a contradiction argument making use of the linear Liouville property of the KdV equation around the soliton (cf. \cite{Martel 2006}). This type of approach was  introduced in \cite{Kenig Martel Robbiano 2011}. Note that we have some  additional difficulties here. First, we cannot get a global estimate by the so-called Kato smoothing effect since we do not prove sufficient decay on compact global solutions. This is actually  the reason why we have to work in the $ H^1$-level. Our argument then combines the $ H^1$-almost monotonicity result with local Kato smoothing effect and an estimate on the derivative of the energy. Second, the equation has no dilation symmetry. Therefore, to normalize the solitary wave profile so that its $ L^2$-norm is the same as that of the initial data, we cannot just rely on  a dilation argument but we have to use a uniform lower bound for $\frac{d}{dc} \| Q_{\gamma,c}\|^2_{L^2} $. To obtain this lower bound, we rely on the implicit function theorem making use of the properties of the second derivative of the action functional $ E+cM $ associated to the KdV equation at the soliton profile. 

Finally, with the nonlinear Liouville theorem in hand, we prove the asymptotic stability result by a classical approach (see for instance \cite{MM1}, \cite{MM2}) with some additional technical difficulties due to the non-local operator $\H $ and the non-positivity of the quadratic part of the energy in the case $ \gamma<0 $.

\section{$L^2$-almost monotonicity to the right of a mainly localized solution }
In this section, we adapt the approach followed in \cite{Gustafson et al. 2009} to our situation.
We will frequently make  use of  the following commutator estimates:
\begin{lemma}\label{Commu Lemma}
There exists a constant $C >0$ such that if $f $ is any continuously differentiable function with $ f' \in L^\infty$ then
\begin{equation}\label{comut}
 \|[\H , f] \ u_x\|_{L^2} + \|[\H \partial_x, f] \ u\|_{L^2} \leq C \ \|f'\|_{L^\infty} \ \|u\|_{L^2} \ , \quad \quad \forall u\in L^2(\R) \; .
 \end{equation}
For  $0< \varepsilon \ll 1$, there exists a constant $ C>0 $ such that if $ f\in W^{3,\infty}(\R)$  then  
\begin{equation} \label{besov1}
 \|\partial_x([\H,f]u_x)\|_{L^2_x}  \lesssim \Bigl(  \|D_x^{2-\varepsilon}f\|_{L^\infty}+ \|D_x^{2+\varepsilon}f\|_{L^\infty}\Bigr) 
  \|u\|_{L^2} \ , \quad \forall u\in L^2(\R) \; .
\end{equation}
\end{lemma}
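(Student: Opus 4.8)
\noindent\emph{Plan.} The two inequalities rely on different ingredients, so I would treat them separately.

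\smallskip
\noindent\emph{Proof of \eqref{comut}.} I would start from the singular integral representation $\H v(x)=\frac1\pi\,\mathrm{p.v.}\!\int\frac{v(y)}{x-y}\,dy$, which gives
\[
[\H,f]u_x(x)=\frac1\pi\,\mathrm{p.v.}\!\int\frac{f(y)-f(x)}{x-y}\,u_y(y)\,dy .
\]
Since $y\mapsto\frac{f(y)-f(x)}{x-y}$ extends continuously across $y=x$ (with value $f'(x)$ there), an integration by parts in $y$ — no boundary term for Schwartz $u$, then density — yields
\[
[\H,f]u_x=-\H(f'u)-C_f u,\qquad C_f u(x):=\mathrm{p.v.}\!\int\frac{f(y)-f(x)}{(x-y)^2}\,u(y)\,dy .
\]
Now $\|\H(f'u)\|_{L^2}\le\|f'\|_{L^\infty}\|u\|_{L^2}$, and $C_f$ is, up to a sign, the first Calder\'on commutator, hence bounded on $L^2$ with $\|C_f\|_{L^2\to L^2}\lesssim\|f'\|_{L^\infty}$ by Calder\'on's commutator theorem. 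This gives the bound for $[\H,f]u_x$; for the other term, the identity $[\H\partial_x,f]u=\H(f'u)+[\H,f]u_x$ reduces it to the same two estimates.

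\smallskip
\noindent\emph{Proof of \eqref{besov1}.} The plan rests on an exact formula for the bilinear symbol: a direct Fourier computation gives, with the convention of the paper,
\[
\widehat{\partial_x\big([\H,f]u_x\big)}(\xi)=\frac{-i}{2\pi}\int\mu(\xi,\eta)\,\widehat f(\xi-\eta)\,\widehat u(\eta)\,d\eta,\qquad \mu(\xi,\eta)=|\xi|\eta-\xi|\eta| .
\]
The point is that $\mu$ vanishes unless $\xi$ and $\eta$ have opposite signs, in which case $|\xi|+|\eta|=|\xi-\eta|$, so $|\xi|,|\eta|\le|\xi-\eta|$ and $|\mu(\xi,\eta)|=2|\xi||\eta|\le\frac12|\xi-\eta|^2$; that is, the commutator forces $f$ to carry the top frequency, and does so with a symbol quadratic in that frequency. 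I would then use a homogeneous Littlewood--Paley decomposition $f=\sum_j f_j$, $f_j=\dot P_j f$. For a single block, the integrand above lives on $\{|\xi-\eta|\sim 2^j\}$, hence $\widehat{\partial_x([\H,f_j]u_x)}$ is supported in $\{|\xi|\lesssim 2^j\}$, only $|\eta|\lesssim 2^j$ contributes, and $|\mu|\lesssim 2^{2j}$ there. Inserting smooth cutoffs $\chi(\xi/2^j)\chi(\eta/2^j)$ equal to $1$ on the relevant region and splitting $\mu=|\xi|\eta-\xi|\eta|$, one writes $\partial_x([\H,f_j]u_x)$ as a finite combination of terms $c\,2^{2j}\,a(D_x)\big[f_j\,b(D_x)u\big]$ with $\|a\|_{L^\infty}+\|b\|_{L^\infty}\lesssim 1$; composing these $L^2$-bounded Fourier multipliers with multiplication by $f_j\in L^\infty$ gives
\[
\big\|\partial_x\big([\H,f_j]u_x\big)\big\|_{L^2}\lesssim 2^{2j}\,\|f_j\|_{L^\infty}\,\|u\|_{L^2}.
\]
Summing in $j$ with $2^{2j}\|f_j\|_{L^\infty}\sim\|D_x^2 f_j\|_{L^\infty}\lesssim\min\big(2^{\varepsilon j}\|D_x^{2-\varepsilon}f\|_{L^\infty},\,2^{-\varepsilon j}\|D_x^{2+\varepsilon}f\|_{L^\infty}\big)$ makes $\sum_{j\ge 0}$ converge against $\|D_x^{2+\varepsilon}f\|_{L^\infty}$ and $\sum_{j<0}$ against $\|D_x^{2-\varepsilon}f\|_{L^\infty}$ — which is exactly why the two off-integer exponents $2\pm\varepsilon$ appear. (Legitimacy of the homogeneous decomposition for $f\in W^{3,\infty}$ uses that $[\H,\cdot]$ annihilates constants, so only the killed polynomial part is lost.)

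\smallskip
\noindent\emph{Where the difficulty sits.} The Fourier computation, the frequency bookkeeping and the $L^2$-boundedness of the one-dimensional multipliers are routine; the only non-elementary external input is Calder\'on's commutator theorem used for \eqref{comut}. The delicate step in \eqref{besov1} is the per-block estimate: one must keep the trilinear structure $a(D_x)\big[f_j\,b(D_x)u\big]$ and \emph{not} pass to absolute values inside the $\eta$-integral, since that is what allows the weak norm $\|f_j\|_{L^\infty}$ to control the middle factor; and the $2^{2j}$ loss must be balanced by exactly one extra, resp. one fewer, derivative on $f$, which is what produces the right-hand side $\|D_x^{2-\varepsilon}f\|_{L^\infty}+\|D_x^{2+\varepsilon}f\|_{L^\infty}$ rather than, say, $\|f'''\|_{L^\infty}$.
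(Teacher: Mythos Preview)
Your argument is correct and matches the paper's approach: for \eqref{comut} the paper merely cites references (your Calder\'on--commutator proof is the standard content of those), while for \eqref{besov1} the paper's proof in the appendix is the same strategy---identify the opposite-sign frequency constraint (there via the projectors $P_\pm$ rather than your explicit symbol $\mu$), do a homogeneous Littlewood--Paley decomposition, and sum with the $\varepsilon$ slack. The only cosmetic differences are that the paper also dyadically decomposes $u$ (unnecessary, as your per-block bound shows) and handles the point you raise about $f\in W^{3,\infty}$ by first truncating to $f_R=f\,\eta(\cdot/R)$ and passing to the limit $R\to\infty$.
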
 
The first estimate can be found in  \cite{Albert 1999} or [\cite{Albert Bona Saut 1961}, Lemma 3.2] whereas the proof of the second estimate is given  in the appendix.

In the sequel, we will make use of a non-negative, non-decreasing function $ \chi \in C^\infty_b(\R) $ such that  $x\mapsto \sqrt{\chi'(x)}$ belongs to  $C^\infty_b(\mathbb{R})$ with $\chi(x)=0$ if $x\le 0$, and $\chi(x)=1$ if $x\ge 1$. 

\begin{proposition} \label{L2Monot} 
Let $ |\gamma| \le 1/2  $ and $ u\in C(\R;H^1(\R)) $ be a solution to \eqref{MainEq} such that there exist $x\,:\, \R\to \R $ of class $ C^1 $ with 
   $ \inf_{\R} \dot{x}\ge 5/6 $  and $ R_0\ge 1  $ with
 \begin{equation}\label{loc}
\|u(t)\|_{L^\infty(|x-x(t)|>R_0)} \le2^{-6} \quad  \forall t\in\R .
 \end{equation}
  For  $ R>0 $, $ t_0\in\R $ and $\vartheta\in [\frac{3}{8},\frac{5}{8}]$,  we set 
     \begin{equation}\label{defI}
 I^{+ R}_{t_0} (t)=\frac{1}{2}\int_{\R} u^2(t) \chi \Bigl(\frac{\cdot-(x(t_0) + R +\vartheta (t-t_0))}{(R+\frac{1}{8}|t_0-t|)^{3/4}}\Bigr) \ .
 \end{equation} 
 and
  \begin{equation}\label{defI2}
 I^{- 2R}_{t_0} (t)=\frac{1}{2}\int_{\R} u^2(t) \chi \Bigl(\frac{\cdot-(x(t_0) - 2R +\vartheta (t-t_0))}{(R+\frac{1}{8}|t_0-t|)^{3/4}}\Bigr) \ .
 \end{equation} 
 Then we have 
 \begin{equation}
I^{+R}_{t_0}(t_0)-I^{+R}_{t_0}(t)\le K_0 R^{-1/4} , \quad \quad \forall t\le t_0 \quad  \label{mono}
\end{equation}
and 
 \begin{equation}
I^{-R}_{t_0}(t)-I^{-R}_{t_0}(t_0)\le K_0 R^{-1/4} , \quad \quad \forall t\ge t_0 \quad  \label{mono2}
\end{equation}
for some constant $ K_0>0 $ that only depends on $M(u)$ and $R_0$. 
\end{proposition}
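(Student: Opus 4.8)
The plan is to establish a differential inequality for $t\mapsto I^{+R}_{t_0}(t)$ (resp. $I^{-R}_{t_0}(t)$) by differentiating in $t$, using the equation \eqref{MainEq}, and integrating the resulting bound in time. I will write $\psi(t,x)=\chi\bigl((x-(x(t_0)+R+\vartheta(t-t_0)))/\sigma(t)^{3/4}\bigr)$ with $\sigma(t)=R+\frac18|t_0-t|$, and carry out the computation of $\frac{d}{dt}I^{+R}_{t_0}(t)$. Two sources of $t$-dependence appear: the translation of the center at speed $\vartheta$, and the sublinear spreading of the width at rate $\sigma^{3/4}$. The spreading term produces a factor $\sigma'(t)\sigma(t)^{-7/4}\simeq \sigma(t)^{-7/4}$ times $\int u^2\,|y\chi'(y)|$, which after using $|y\chi'(y)|\lesssim \sqrt{\chi'}$ (legitimate since $\sqrt{\chi'}\in C^\infty_b$, so $\chi'$ vanishes to infinite order at the endpoints) is controlled by $\sigma(t)^{-7/4}M(u)$; integrating $\int_{-\infty}^{t_0}\sigma(t)^{-7/4}\,dt\lesssim R^{-3/4}$, which is better than the claimed $R^{-1/4}$. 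So the spreading term is harmless and the real work is the drift term.

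For the drift term, substituting $\partial_t u=-u_{3x}-\gamma\H u_{xx}-uu_x$ and integrating by parts moves everything onto derivatives of $\psi$, which are $O(\sigma^{-3/4})$, $O(\sigma^{-3/2})$, $O(\sigma^{-9/4})$ for the first, second, third derivatives. The dangerous terms are the ones carrying a single derivative of $\psi$: from $u_{3x}$ one gets (after three integrations by parts) $-\tfrac32\int u_x^2\,\partial_x\psi+\text{(lower order)}$, from $\gamma\H u_{xx}$ one gets a term of the form $\gamma\int (D_x^{1/2}u)^2\,\partial_x\psi$ up to commutators, and from $uu_x$ a term $-\tfrac13\int u^3\,\partial_x\psi$; against these we have the favorable term $-\vartheta\int u^2\,\partial_x\psi$ coming from the motion of the center. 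On the region $|x-x(t)|>R_0$ one has $|u|\le 2^{-6}$, so $|\tfrac13\int u^3\partial_x\psi|\le 2^{-6}\cdot\tfrac13\int u^2|\partial_x\psi|$, and since $\inf\dot x\ge 5/6$ while $\vartheta\le 5/8$, the center of $\psi$ stays to the right of $x(t)+R_0$ for $t\le t_0$ (using $R\ge$ some fixed constant and $\sigma^{3/4}\ll \sigma$), so $\partial_x\psi$ is supported where the $L^\infty$-smallness holds. The key algebraic point is then that
\[
-\tfrac32\int u_x^2\,\partial_x\psi+\gamma\int (D_x^{1/2}u)^2\,\partial_x\psi-\tfrac13\int u^3\,\partial_x\psi-\vartheta\int u^2\,\partial_x\psi \le C\,\sigma^{-3/2}M(u)
\]
because $\partial_x\psi\ge 0$, $\vartheta\ge\tfrac38$, $|\gamma|\le\tfrac12$, and $|u|\le 2^{-6}$ together force the integrand to be $\le 0$ pointwise up to the $(D_x^{1/2}u)^2$ term, which is handled by Lemma \ref{Commu Lemma} (moving $\partial_x\psi$ through $\H\partial_x$ at the cost of $\|\partial_x\psi\|_{L^\infty}\cdot\|\partial_x\psi\|_{C^1}$-type commutator bounds that are $O(\sigma^{-3/2})$ and $O(\sigma^{-3})$). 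All remaining terms carry at least two derivatives of $\psi$, hence are $O(\sigma^{-3/2})M(u)$, and $\int_{-\infty}^{t_0}\sigma(t)^{-3/2}\,dt\lesssim R^{-1/2}\le R^{-1/4}$; integrating gives \eqref{mono}, and \eqref{mono2} follows by the same computation with the sign of $t-t_0$ reversed and with the $-2R$ center chosen so that $\partial_x\psi$ is again supported in the localization region for $t\ge t_0$ (here one uses $\inf\dot x\ge 5/6>\vartheta$ so the soliton outruns the cutoff to the left).

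The main obstacle is the term generated by $\gamma\H u_{xx}$: unlike the local terms it does not have a sign and the non-local operator does not commute with the cutoff. The resolution is exactly the commutator estimate \eqref{comut}, which lets one write $\int \gamma\,\H u_{xx}\,u\,\psi = \gamma\int (\H\partial_x u)\,\partial_x(u\psi) + \text{comm} = -\gamma\int D_x^{1/2}u\cdot D_x^{1/2}(u\,\partial_x\psi\,\text{-pieces}) +\dots$; the pieces with $\partial_x\psi$ landing on $u$ are absorbed because the coefficient $|\gamma|\le 1/2$ is small compared with $\vartheta\ge 3/8$ after using $|u|\le 2^{-6}$ on the support, and the commutator pieces are $O(\|\sqrt{\partial_x\psi}'\|_{L^\infty}^2)M(u)=O(\sigma^{-3/2})M(u)$, where one needs $\sqrt{\partial_x\psi}$ to be smooth — which is guaranteed by the hypothesis that $\sqrt{\chi'}\in C^\infty_b$. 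One also has to check that the second estimate \eqref{besov1} of Lemma \ref{Commu Lemma} is what controls the highest commutator term $\partial_x([\H,\psi]u_x)$ coming from $\gamma\H u_{2x}$ after the first integration by parts, again giving an $O(\sigma^{-3/2})$ bound. Tracking the constant, it depends only on $M(u)$ and on $R_0$ (through the measure of the transition region where $|u|$ need not be small, which is why $R_0$ enters $K_0$), which is the stated dependence.
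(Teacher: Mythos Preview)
Your overall strategy matches the paper's: differentiate $I^{+R}_{t_0}$ in time, use the equation, control the Hilbert contributions via the commutator estimates of Lemma~\ref{Commu Lemma}, use the localization hypothesis~\eqref{loc} on the support of $\partial_x\psi$ for the cubic term, and integrate in time. But there is a genuine miscalculation in your treatment of the spreading term, and it is precisely this term that dictates the rate $R^{-1/4}$.

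You claim the spreading contribution is $\sigma'\sigma^{-7/4}\int u^2|y\chi'(y)|\sim\sigma^{-7/4}M(u)$, hence integrates to $R^{-3/4}$ and is ``harmless''. This is not right. With $y=(x-c(t))/\sigma^{3/4}$ one computes
\[
\partial_t y \;=\; -\vartheta\,\sigma^{-3/4}\;-\;\tfrac{3}{4}\,\sigma'\,(x-c)\,\sigma^{-7/4}
\;=\; -\vartheta\,\sigma^{-3/4}\;-\;\tfrac{3}{4}\,\sigma'\,y\,\sigma^{-1},
\]
so the spreading piece of $\tfrac12\int u^2\,\partial_t\psi$ is of order $\sigma^{-1}\int u^2\,|y\chi'(y)|\,dx$, not $\sigma^{-7/4}$ (the missing $\sigma^{3/4}$ is hidden in $x-c=y\,\sigma^{3/4}$). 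Since $\int_{-\infty}^{t_0}\sigma(t)^{-1}\,dt$ diverges, this term cannot be integrated directly. The paper's remedy is Young's inequality,
\[
\frac{\theta\,|y|}{\sigma}\;\le\;\frac{2^{-2}}{\sigma^{\theta}}\;+\;\frac{\theta^2 y^2}{\sigma^{2-\theta}},
\]
so that the first piece multiplied by $\chi'(y)$ is a small multiple of $\Psi_x$ and is absorbed into the favorable drift term $-\tfrac{\vartheta}{2}\int u^2\Psi_x$, while the second piece is $O(\sigma^{\theta-2})M(u)$. With $\theta=3/4$ this integrates to $R^{-1/4}$, which is exactly the rate in \eqref{mono}. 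In other words, the spreading term is the bottleneck, not a harmless lower-order remainder; your commutator and $\Psi_{xxx}$ pieces are indeed $O(\sigma^{-3/2+})$ and integrate to better than $R^{-1/4}$, but they do not account for the final rate. A smaller point: the Hilbert contribution is not organized as $\gamma\int(D_x^{1/2}u)^2\partial_x\psi$; rather one uses antisymmetry to write $\gamma\int\H u_x\,u_x\,\Psi=-\tfrac{\gamma}{2}\int u_x[\H,\Psi]u_x$ (controlled by \eqref{besov1}) and handles $\gamma\int\H u_x\,u\,\Psi_x$ via $h=\sqrt{\Psi_x}$ and \eqref{comut}, producing $\tfrac{|\gamma|}{4}\int u^2\Psi_x+|\gamma|\int u_x^2\Psi_x$ plus $O(\sigma^{-2\theta})$, which are absorbed because $|\gamma|\le\tfrac12$.
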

\begin{proof}
 We start by  proving \eqref{mono}; the proof of \eqref{mono2} will follow exactly as the latter only up to slight modifications. We set 
 \begin{equation} \label{defPsi}
 \Psi(t,x) = \chi \Bigl(\frac{\cdot-(x(t_0) + R +\vartheta (t-t_0))}{(R+\frac{1}{8}(t_0-t))^\theta}\Bigr) 
 \end{equation}
 where $1/2<\theta<1 $ will be chosen later. 
 It is worth noticing that, for $ R\ge 1$,
 \begin{equation}\label{Psit22}
\Psi(t_0,x)=\chi\Bigl( \frac{x -x(t_0)-R)}{R^\theta}\Bigr)=1 \quad \text{ for } x\ge x(t_0)+2 R
 \end{equation}
 and for $ t\le t_0 $, 
 \begin{equation}\label{Psit}
\Psi(t,x)=\left\{ \begin{array}{rcl} 0  & \text{ for }  & x\le x(t_0) +R +\vartheta (t-t_0) \\
  1 &  \text{ for }  & x\ge  x(t_0) +R +\vartheta (t-t_0) +R^\theta+ (t_0-t)^{\theta}
 \end{array}
 \right. \; .
\end{equation}
 To simplify the notation, we set 
  $ \displaystyle y:=\frac{x-(x(t_0) +R +\vartheta (t-t_0))}{(R+\frac{1}{8}(t_0-t))^\theta}$. We  thus have 
  $\Psi(t,x) = \chi(y)$ and,   for any $ k\in \mathbb{N} $, 
\begin{equation}\label{deriv}
\frac{\partial^k}{\partial_x^k} \Psi (t,x)= \frac{1}{(R+\frac{1}{8}(t_0-t))^{k \theta}} \  \chi^{(k)}(y) \; .
\end{equation}
Moreover, for $ t\neq t_0$, 
\begin{align}
\partial_t \Psi (t,x)&=- \vartheta(R+\frac{1}{8}(t_0-t))^{-\theta} \chi'(y) + \frac{\theta \ y}{8(R+\frac{1}{8}(t_0-t))} \chi'(y) \nonumber \\
&= -\vartheta \Psi_x(t,x) + \frac{\theta \ y}{8(R+\frac{1}{8}(t_0-t))} \chi'(y)
.\label{psit}
\end{align}
Note also that,  for any $t\in\R $, $ \Psi(t)\in W^{3,\infty}(\R) $ and that the identity \eqref{deriv} with $ k=1 $ leads to 
$$
{\mathcal F}_x(\Psi_x(t,\cdot))(\xi) =e^{i(x(t_0)+R +\vartheta (t_0-t))\xi} \widehat{\chi'}((R+\frac{1}{8}(t_0-t))^\theta \, \xi) \ .
$$
Besides, we use the change of variable $\eta:= ((R+\frac{1}{8}(t_0-t))^\theta \, \xi$, which then gives $d \eta = ((R+\frac{1}{8}(t_0-t))^{\theta} d \xi$, and recall that $D_x^s \psi_x = \mathcal{F}_x^{-1} (|\xi|^s \ \hat{\psi}(\xi))$. Therefore, for $ s\ge 0 $, we get
$$
D^s_x \Psi_x(x) = (R+\frac{1}{8}(t_0-t))^{-\theta(s+1)}  \int_{\R} \displaystyle e^{i \frac{x-x(t_0)-R -\vartheta(t_0-t)}{(R+\frac{1}{8}(t_0-t))^\theta} \eta }
|\eta|^s \ \widehat{\chi'}(\eta) \, d \eta 
$$
which ensures that, for $ \delta\ge 1 $, the following holds 
\begin{equation}\label{deriv2}
\|D^{\delta}_x \Psi \|_{L^\infty_x} \lesssim (R+\frac{1}{8}(t_0-t))^{-\theta \delta} \; .
\end{equation}
In the sequel of this proof, $ C=C(\chi)  $ denotes a positive constant depending on $ \chi $ that can change from line to line. 
Differentiating $I^{+R}_{t_0}(t)$ with respect to time with \eqref{psit} in hand, we get for any $ t<t_0 $, 
\begin{eqnarray}\nonumber
\displaystyle \frac{d}{dt}I^{+R}_{t_0}(t) &=& \int_{\mathbb{R}} u_t \ u \ \Psi \ dx \ + \ \frac{1}{2} \int_{\mathbb{R}} u^2 \partial_t \Psi \ dx \nonumber \\
\displaystyle &=& \int u_{xx} (u_x \Psi + u \Psi_x) \ + \ \gamma \int \H u_x (u_x \Psi + u \Psi_x) \ - \ \frac{1}{3} \int \partial_x (u^3) \ \Psi \nonumber \\
\displaystyle && -\frac{\vartheta}{2} \int u^2 \ \Psi_x +\frac{1}{2(R+\frac{1}{8}(t_0-t))} \int \theta \ y \ u^2 \ \chi'(y) \nonumber \\
\displaystyle &=& -\frac{\vartheta}{2}  \int u^2 \ \Psi_x-\frac{3}{2} \int u_x^2 \Psi_x +\frac{1}{2} \int u^2 \Psi_{xxx} + \gamma \int \H u_x (u_x \Psi + u \Psi_x)  \nonumber \\
\displaystyle &&+ \frac{1}{3} \int u^3 \Psi_x +\frac{1}{2^4(R+\frac{1}{8}(t_0-t))} \int \theta \ y \ u^2 \ \chi'(y) \ dx .\label{main}
\end{eqnarray}
Firstly, it follows from  \eqref{deriv} that
 \begin{equation} \label{Est1}
 \frac{1}{2} \int u^2 \ \Psi_{xxx} \leq  \frac{1}{(R+\frac{1}{8}(t_0-t))^{3 \theta}} \|\chi^{'''}\|_{L^{\infty}} \int u^2 \leq C \  \frac{M(u)}{(R+\frac{1}{8}(t_0-t))^{3 \theta}} \ .
 \end{equation}
To treat terms involving the Hilbert transform, we will use commutator estimates.
For the first term, we notice that 
\begin{eqnarray*}
\displaystyle \int \H u_x \Psi u_x &=& - \int u_x \H(\Psi u_x) = - \int \H u_x \Psi u_x - \int u_x [\H, \Psi] u_x
\end{eqnarray*}
so that 
\begin{equation}\label{tgtg}
 \gamma \int \H u_x \Psi u_x = -\frac{\gamma}{2} \int u_x [\H, \Psi] u_x \ .
\end{equation}
On the other hand, integrating by parts and making use of \eqref{besov1} and \eqref{deriv2}, we get 
\begin{eqnarray} 
\bigg | \int u_x [\H, \Psi] u_x \bigg | =  \bigg | \int u \partial_x( [\H, \Psi] u_x) \bigg |  &\lesssim&  \|u(t)\|^2_{L^2_x} 
(\|D_x^{2-\varepsilon}\Psi(t)\|_{L^\infty_x}+\|D_x^{2+\varepsilon}\Psi(t)\|_{L^\infty_x}) \nonumber \\
& \lesssim& \frac{\|u(t)\|^2_{L^2_x}}{(R+\frac{1}{8}(t_0-t))^{(2-\varepsilon) \theta}}\;. \label{Est2}
 \end{eqnarray}
 To treat the second term, it  is convenient to introduce $ h= \sqrt{\partial_x \Psi} $ . Note that according to the definition of $ \chi $, $x\mapsto h(x) $ belongs to $ C^\infty(\R) $  and $\displaystyle \int_{\mathbb{R}} h^2 =1$. By Cauchy-Schwarz and Young's inequalities, the commutator estimate
 \eqref{comut}, \eqref{deriv2}  and the fact that $\H$ is unitary in $L^2$, we get
\begin{eqnarray}\nonumber
\bigg | \displaystyle \int \H u_x \Psi_x u\bigg | &=& \bigg | \int  \H u_x \ h^2 u\bigg | = \bigg |\int h \ u \ [h, \H] u_x + \int h \ u \ \H(h u_x)\bigg | \nonumber \\
\displaystyle &\leq & \|h\|_{L^\infty} \|u\|_{L^2} \|[h, \H] u_x\|_{L^2} + \frac{1}{4} \|h \ u\|^2_{L^2} + \|\H(h \ u_x) \|^2_{L^2} \nonumber \\
\displaystyle &\leq & \|h\|_{L^\infty} \|u\|_{L^2} \| h_x\|_{L^\infty} \|u\|_{L^2} + \frac{1}{4} \|h \ u\|^2_{L^2} +  \|h \ u_x \|^2_{L^2} \nonumber \\
\displaystyle &\le& C\, \frac{\|u\|_{L^2}^2}{(R+\frac{1}{8}(t_0-t))^{2\theta}}  + \frac{1}{4} \int h^2 u^2 \ +  \int h^2 u_x^2 \ .\nonumber \end{eqnarray}
Thus, 
\begin{equation}\label{Est3}
\bigg |\gamma \int \H u_x \Psi_x u \bigg |\leq C\, |\gamma| \frac{\|u\|^2_{L^2}}{(R+\frac{1}{8}(t_0-t))^{2\theta}} + \frac{|\gamma|}{4} \int u^2 \ \Psi_x  + |\gamma|\int u^2_x \ \Psi_x .
\end{equation}
Now, in view of \eqref{Psit}, for $ t\le t_0$,  it holds 
\begin{equation}\label{support}
\text{supp}\,  \Psi_x(t,.) \subset [x(t_0)+R+\vartheta(t-t_0),+\infty[ \subset [x(t)+R,+\infty[ 
\end{equation}
since $ \dot{x}\ge 5/6\ge 3/4 $. 
Therefore, according to \eqref{loc}, for $R\ge R_0$ we have 
\begin{equation}\label{est4}
\frac{1}{3} \int u^3 \ \Psi_x \leq \frac{1}{3} \|u\|_{L^{\infty}(supp \Psi_x)} \int u^2 \ \Psi_x \leq 2^{-6} \int u^2 \ \Psi_x dx \ .
\end{equation}
Finally, for the last term in \eqref{main}, we use Young's inequality to get
\begin{equation}\label{young}
 \frac{\theta |y|}{(R+\frac{1}{8}(t_0-t))} \leq \frac{2^{-2}}{(R+\frac{1}{8}(t_0-t))^{\theta}} + \frac{ \theta^2 \ y^2}{(R+\frac{1}{8}(t_0-t))^{2-\theta}} 
\end{equation}
and thus 
$$\bigg | \int \frac{\theta \ y \ u^2 \ \chi'(y)}{2^4(R+\frac{1}{8}(t_0-t))} \ dx\bigg | \leq 2^{-6} \int \frac{\chi'(y)}{(R+\frac{1}{8}(t_0-t))^{\theta}} \ u^2 \ dx \ + \ \frac{2^{-4}\theta^2}{(R+\frac{1}{8}(t_0-t))^{2-\theta}} \int y^2 \ \chi'(y) \ u^2 \ dx .
$$
Recalling that  supp $\chi' \subset [0,1]$ and so  $\| y^2 \ \chi'\|_{L^\infty} \lesssim 1$, we eventually obtain
\begin{eqnarray} 
\bigg |\displaystyle \int \frac{\theta \ y}{2(R+\frac{1}{8}(t_0-t))} \ u^2 \ \chi'(y) \ dx\bigg | &\leq & 2^{-6} \int u^2 \ \Psi_x \ dx + \frac{C \ \|u\|^2_{L^2}}{(R+\frac{1}{8}(t_0-t))^{2-\theta}} \ . \quad \quad \label{est5}
\end{eqnarray}
 Gathering \eqref{Est1}, \eqref{Est2}, \eqref{Est3}, \eqref{est4} and \eqref{est5}, we end up with
\begin{eqnarray}
\displaystyle \frac{d}{dt} I^{+R}_{t_0}(t) &\leq & \frac{4 |\gamma| -6}{4} \int u_x^2(t) \ \Psi_x(t) \ dx \ + \bigg ( \frac{|\gamma|}{4} +2^{-6}+2^{-6} -\frac{\vartheta}{2}  \bigg) \int u^2(t) \ \Psi_x(t) \ dx \nonumber \\
\displaystyle & + & C\, M(u(t))\, \Bigl(  (R+\frac{1}{8}(t_0-t))^{-(2-\varepsilon) \theta} +  (R+\frac{1}{8}(t_0-t))^{ \theta-2}\Bigr) \nonumber \\ 
\nonumber \\
\displaystyle &\leq & -\int u_x^2(t) \Psi_x(t) \ dx -2^{-5} \int u^2(t) \Psi_x(t) \ dx \nonumber \\
\displaystyle & &+ C\, M(u(t))\,\Bigl(  (R+\frac{1}{8}(t_0-t))^{-(2-\varepsilon) \theta} +  (R+\frac{1}{8}(t_0-t))^{ \theta-2}\Bigr)
\label{finalder}
\end{eqnarray}
where we used that $R+\frac{1}{8}(t_0-t)\ge R_0\ge 1 $ and $ |\gamma|\le 1/2$.
Integrating this differential inequality on $ ]t,t_0[ $ for $ t<t_0 $, we eventually get 
$$
 I^{+R}_{t_0}(t_0) - I^{+R}_{t_0}(t) \le  C\, M(u)\, (R^{1-(2-\varepsilon) \theta}+R^{\theta-1}) \; .
$$
Clearly, the decay in the above relation  is optimal for $ \theta = \frac{2}{3}+ $, while for $ \theta= \frac{3}{4} $ it  leads to \eqref{mono} for $ R>R_0\ge 1 $. The case $ 0<R\le R_0 $ then follows directly from the conservation of the $ L^2$-norm. 

Analogously, the proof of  \eqref{mono2} follows the same lines by replacing $ \Psi $ by $  \tilde{\Psi} $ defined by 
 \begin{equation}\label{defPsi2}
 \tilde{\Psi}(t,x) = \chi \Bigl(\frac{\cdot-(x(t_0) -2 R +\vartheta (t-t_0))}{(R+\frac{1}{8}(t_0-t))^\frac{3}{4}}\Bigr) .
 \end{equation}
 This time for $ t\ge t_0 $, it holds 
 \begin{equation}\label{Psit2}
\tilde{\Psi}(t,x)=\left\{ \begin{array}{rcl} 0  & \text{ for }  & x\le x(t_0) -2R +\vartheta (t-t_0) \\
  1 &  \text{ for }  & x\ge  x(t_0) -2R +\vartheta (t-t_0) + R^\frac{3}{4} + (t_0-t)^\frac{3}{4}
 \end{array}
 \right. \; .
\end{equation}
 Since for $ R >0 $ large enough, $ \tau \mapsto -R+R^\frac{3}{4} + \tau^\frac{3}{4}-\frac{1}{8} \tau$  is  clearly negative on $\R_+ $, we deduce that there exists $ \tilde{R}_0 \ge R_0 $ such that for all $ R\ge  \tilde{R}_0$, 
 $$
  \tilde{\Psi}(t,x)=1 \quad \text{for} \; t\ge t_0 \quad \text{and} \quad   x\ge  x(t_0) -R +(\vartheta+\frac{1}{8}) (t-t_0) . 
  $$
   In particular, as $ \dot{x}(t) \ge 5/6 > \vartheta+\frac{1}{8}$, it follows that 
   \begin{equation}\label{dodo}
    \tilde{\Psi}_x(t,x)=0 \quad \text{for} \; t\ge t_0 \quad \text{and} \quad   x\ge  x(t) -\tilde{R}_0 \; .
   \end{equation}
   According to \eqref{loc}, we recover that, as soon as $ R\ge \tilde{R}_0$, $|u(t,x)| < 2^{-6} $ for any $ x\in \text{supp}\,  \tilde{\Psi}_x(t) $ but this time for $ t\ge t_0$. Then, exactly the same calculations as above lead to  \eqref{mono2} which completes the proof of the proposition.
 \end{proof}
\section{Almost monotonicity for an energy at the right of a mainly localized solution}
In this section, we prove the almost monotonicity of the localized quantity $ 4M +E $ at the right of a mainly localized solution. We follow the same kind of arguments as in the preceding section but with more technical difficulties.
\begin{proposition} \label{H1Monot} 
(Almost Monotonicity at the $ H^1$-level)\\
For a solution $ u\in C(\R;H^1(\R)) $ to \eqref{MainEq}, $t, \ t_0 \in \mathbb{R}$ and $\vartheta \in [\frac{3}{8},\frac{5}{8}] $, we set 
\begin{equation}\label{deftEnergy}
J_{t_0}^{+R}(t) :=   \int  \Bigl(\frac{1}{2} u_x^2 \ - \frac{\gamma}{2} u \H u_x \  - \frac{1}{6} \ u^3 \Bigr)(t) \ 
\chi \Bigl(\frac{\cdot-(x(t_0) + R +\vartheta(t-t_0))}{(R+\frac{1}{8}|t_0-t|)^{3/4}}\Bigr) 
\end{equation}
and 
\begin{equation}\label{deftEnergy2}
J_{t_0}^{-2R}(t) :=   \int  \Bigl(\frac{1}{2} u_x^2 \ - \frac{\gamma}{2} u \H u_x \  - \frac{1}{6} \ u^3 \Bigr)(t) \ 
\chi \Bigl(\frac{\cdot-(x(t_0) -2 R +\vartheta(t-t_0))}{(R+\frac{1}{8}|t_0-t|)^{3/4}}\Bigr) \ .
\end{equation}
For $|\gamma|\le 1/2$ and under the same assumptions as in Proposition \ref{L2Monot}, there exist $C>0$ only depending on $ M(u_0)$, $E(u_0) $ and $ R_0 $  such that for all $R>0$ and $t\le  t_0$,
\begin{equation}
4 I^{+R}_{t_0}(t_0)+J^{+R}_{t_0}(t_0) \leq  4 I^{+R}_{t_0}(t) +J^{+R}_{t_0}(t)  +C \ R^{-\frac{1}{4}} \sup_{t \in \mathbb{R}} \|u(t)\|^2_{H^1} (1+\|u(t)\|^2_{H^1}) \label{monotH1}
\end{equation}
whereas 
for all $ R>0 $ and $ t\ge t_0$ 
\begin{equation}
4 I^{-2R}_{t_0}(t_0)+J^{-2R}_{t_0}(t_0) \leq  4 I^{-2R}_{t_0}(t) +J^{-2R}_{t_0}(t)  +C \ R^{-\frac{1}{4}} \sup_{t \in \mathbb{R}} \|u(t)\|^2_{H^1} (1+\|u(t)\|^2_{H^1})\ .  \label{monotH12}
\end{equation}
\end{proposition}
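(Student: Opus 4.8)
The plan is to mimic the differentiation-in-time argument of Proposition~\ref{L2Monot}, but now at the level of the localized energy density $\frac12 u_x^2 - \frac\gamma2 u\H u_x - \frac16 u^3$, and then to add four times the $L^2$-monotonicity estimate~\eqref{mono}--\eqref{mono2} to absorb the bad terms. Concretely, I would first set $\Psi(t,x)=\chi\bigl((x-(x(t_0)+R+\vartheta(t-t_0)))/(R+\tfrac18(t_0-t))^{\theta}\bigr)$ with $1/2<\theta<1$ chosen at the end (and $\theta=3/4$ for the final constants), record again the pointwise identities \eqref{deriv}, \eqref{psit} and the Fourier-side bound \eqref{deriv2} for $\|D_x^\delta\Psi\|_{L^\infty}$, so that all ``error'' weights decay like $(R+\tfrac18(t_0-t))^{-\theta}$ or better. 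Then I would compute $\frac{d}{dt}\int(\frac12 u_x^2-\frac\gamma2 u\H u_x-\frac16 u^3)\Psi\,dx$, substituting the equation $u_t=-u_{xxx}-\gamma\H u_{xx}-uu_x$ and integrating by parts. The key algebraic point, exactly as in the KdV/Benjamin monotonicity lore, is that the ``top-order'' local term coming from $\int u_{xt}u_x\Psi$ after using the equation produces $-\frac32\int u_{xx}^2\Psi_x$ (up to commutators and lower order), which is a \emph{good} negative term; the genuinely dangerous leftovers are of the form $\int u_x^2\Psi_x$, $\int u^2\Psi_x$ and cubic terms $\int u^3\Psi_x$, $\int u^4\Psi_x$, plus Hilbert-transform commutators.

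Next I would estimate the non-local contributions. The terms $\gamma\int \H u_{xx}(u_x\Psi+\dots)$ and $\gamma\int u\H u_x\,\partial_t\Psi$-type expressions are handled by exactly the commutator toolkit of Lemma~\ref{Commu Lemma}: writing $\int \H u_{xx}u_x\Psi = -\frac12\int u_x[\,\H,\Psi]u_{xx}$ or passing a derivative onto $\Psi$ and invoking \eqref{besov1} and \eqref{deriv2}, one bounds everything by $(R+\tfrac18(t_0-t))^{-(2-\varepsilon)\theta}\|u\|_{H^1}^2$ or, when only one $\Psi$-derivative is available, by $\|u\|_{H^1}^2$ times $\sqrt{\Psi_x}$-localized quantities that can be absorbed. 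Wherever a factor $\sqrt{\Psi_x}=h$ can be extracted (for the first-order-in-$\Psi_x$ pieces), I would use $\int h^2 =1$, $\|h\|_{L^\infty}+\|h_x\|_{L^\infty}\lesssim(R+\dots)^{-\theta/2}$ and Young's inequality to trade, say, $|\gamma|\int\H u_x\,\Psi_x\,u|$ against $\frac{|\gamma|}4\int u^2\Psi_x + |\gamma|\int u_x^2\Psi_x + C|\gamma|(R+\dots)^{-2\theta}\|u\|_{L^2}^2$, precisely as in \eqref{Est3}. For the cubic/quartic localized terms I would use the localization hypothesis \eqref{loc}: on $\mathrm{supp}\,\Psi_x$ one has, for $R\ge R_0$, $|u|\le 2^{-6}$ (by the support inclusion \eqref{support} and $\dot x\ge 5/6$), so $|\int u^3\Psi_x|\le 2^{-6}\int u^2\Psi_x$, $|\int u_x^2 u\Psi_x|\le 2^{-6}\int u_x^2\Psi_x$, and likewise for $\int u^4\Psi_x$; this is where the smallness of the cubic nonlinearity on the support of $\Psi_x$ is exploited. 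The $\partial_t\Psi$ contribution splits via \eqref{psit} into a $-\vartheta\Psi_x$ piece and a $\frac{\theta y}{8(R+\dots)}\chi'(y)$ piece, the latter being treated by the same Young-inequality trick as in \eqref{young}--\eqref{est5}, contributing only $2^{-6}\int(\dots)\Psi_x$ plus an integrable tail $C\|u\|_{H^1}^2(R+\dots)^{\theta-2}$.

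Collecting everything, I expect to arrive at a differential inequality of the shape
\begin{equation}
\frac{d}{dt}\bigl(4I^{+R}_{t_0}+J^{+R}_{t_0}\bigr)(t)\;\ge\; c_0\!\int\!\bigl(u_{xx}^2+u_x^2+u^2\bigr)(t)\,\Psi_x(t)\,dx\;-\;C\,\sup_{\R}\|u\|_{H^1}^2(1+\|u\|_{H^1}^2)\,(R+\tfrac18(t_0-t))^{-(2-\varepsilon)\theta}
\label{finalderH1}
\end{equation}
for some $c_0>0$, valid for $t<t_0$, where the factor $4$ in front of $I^{+R}_{t_0}$ has been tuned so that the $\frac{4|\gamma|-6}{4}\int u_x^2\Psi_x$ and $(\frac{|\gamma|}4+2^{-6}+\cdots-\frac\vartheta2-2)\int u^2\Psi_x$ coefficients inherited from both the $L^2$- and the energy-computations are strictly negative when $|\gamma|\le 1/2$ and $\vartheta\ge 3/8$. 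Dropping the manifestly non-negative localized term $c_0\int(u_{xx}^2+u_x^2+u^2)\Psi_x$ and integrating \eqref{finalderH1} over $]t,t_0[$ then yields, with $\theta=3/4$, the bound $4I^{+R}_{t_0}(t_0)+J^{+R}_{t_0}(t_0)\le 4I^{+R}_{t_0}(t)+J^{+R}_{t_0}(t)+C\,R^{-1/4}\sup_{\R}\|u\|_{H^1}^2(1+\|u\|_{H^1}^2)$, i.e.\ \eqref{monotH1}; the case $0<R\le R_0$ follows, as before, from conservation of $M$ and $E$ (the localized quantity differing from a conserved one by a term controlled by $R_0$-independent constants). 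The estimate~\eqref{monotH12} is obtained verbatim with $\Psi$ replaced by $\tilde\Psi$ of \eqref{defPsi2}, using \eqref{dodo} to guarantee $\tilde\Psi_x=0$ on $\{x\ge x(t)-\tilde R_0\}$ for $t\ge t_0$ so that the localization hypothesis again applies on $\mathrm{supp}\,\tilde\Psi_x$, and reversing the sign of the time integration. The main obstacle I anticipate is the careful bookkeeping of the Hilbert-transform commutator terms at the energy level: unlike in Proposition~\ref{L2Monot}, we now face $\gamma\int\H u_{xx}\,(u_x\Psi + u\Psi_x)$ together with the $\gamma\,u\H u_x$ contribution to $\partial_t J$, and one must check that every such term is either a good negative term, or of commutator type bounded by \eqref{besov1}/\eqref{comut} with a weight $\le(R+\dots)^{-\theta}$, or absorbable into $c_0\int(u_{xx}^2+u_x^2+u^2)\Psi_x$ after extracting $h=\sqrt{\Psi_x}$ and using Young — in particular verifying that no term of the form $\int u_{xx}u_x\,(\text{non-local, }\Psi_x)$ survives with an uncontrolled sign, which is the reason the coefficient $4$ (rather than a smaller one) in front of $I^{\pm R}_{t_0}$ is needed.
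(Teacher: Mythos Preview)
Your proposal is correct and follows essentially the same approach as the paper's proof: you differentiate $J^{+R}_{t_0}$ in time, split into the three contributions coming from $\frac12 u_x^2$, $-\frac\gamma2 u\H u_x$, $-\frac16 u^3$, extract the good term $-\frac32\int \Psi_x u_{xx}^2$, handle all Hilbert-transform pieces by the commutator estimates \eqref{comut}--\eqref{besov1} (with the $h=\sqrt{\Psi_x}$ trick where needed), use the localization \eqref{loc} on $\mathrm{supp}\,\Psi_x$ for the cubic/quartic terms, and then add $4\,\frac{d}{dt}I^{+R}_{t_0}$ from \eqref{finalder} to absorb the remaining $\int u^2\Psi_x$ and $\int u_x^2\Psi_x$ terms before integrating in time with $\theta=3/4$. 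One slip to fix: your displayed inequality \eqref{finalderH1} should read $\le -c_0\int(\dots)\Psi_x + C(\dots)$ rather than $\ge c_0\int(\dots)\Psi_x - C(\dots)$; with the correct sign, dropping the (now non-positive) localized term and integrating over $]t,t_0[$ indeed yields \eqref{monotH1}.
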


\begin{proof} We first prove \eqref{monotH1}. Consider $\Psi $ as that defined in \eqref{defPsi}.  
 We compute 
\begin{eqnarray} 
\frac{d}{dt} J^{+R}_{t_0}(t) & =&  \frac{1}{2}  \frac{d}{dt}\int \Psi \ u_x^2 \ - \frac{\gamma}{2} \frac{d}{dt}\int \Psi \ u \H u_x \  - \frac{1}{6} \frac{d}{dt}\int \Psi \ u^3 \nonumber \\
&:= &  \Theta_1 -\frac{\gamma}{2} \Theta_2 + \Theta_3 \ . \label{noH1}
\end{eqnarray} 
We are going to develop these contributions one by one 
starting with 
$$ \Theta_1 = \frac{1}{2} \frac{d}{dt} \int \Psi \ u_x^2
 = \int \Psi \ u_x \ u_{xt} \ + \frac{1}{2} \int \Psi_t \ u_x^2 := B + C \; .$$
By integration by parts, we  get 
\begin{eqnarray}\nonumber
\displaystyle B &=&  \int \Psi \ u_x \ \partial_x (- u_{3x} - \gamma \H u_{2x} - u \ u_x) \nonumber \\
\displaystyle &=& - \int \Psi u_x u_{4x} \ - \gamma \int \Psi u_x \H u_{3x} \ - \int \Psi u_x ( u \ u_x)_x \nonumber \\
\displaystyle & = & \int \Psi_x u_x u_{3x} \ + \int \Psi u_{2x} u_{3x} \ + \gamma \int \Psi_x u_x \H u_{2x} \ + \gamma \int \Psi u_{2x} \H u_{2x} \ - \int \Psi u_x (u \ u_x)_x \nonumber \\
\displaystyle &=& - \int \Psi_{2x} u_x u_{2x}  \ -\int \Psi_x u^2_{2x} \ + \frac{1}{2} \int \Psi \partial_x (u_{2x}^2) \ + \gamma \int \Psi_x u_x \H u_{2x} \ + \gamma \int \Psi u_{2x} \H u_{2x} \nonumber \\
\displaystyle & & - \int \Psi u_x (u \ u_x)_x \nonumber \\
\displaystyle &=& \frac{1}{2} \int \Psi_{3x} u_x^2 \ - \frac{3}{2} \int \Psi_x u^2_{2x} \ + \gamma \int \Psi_x u_x \H u_{2x} \ + \gamma \int \Psi u_{2x} \H u_{2x} \ - \int \Psi u_x (u  u_x)_x \, \nonumber
\end{eqnarray}
where we notice that the first term in the right-hand side of the last inequality can be estimated thanks to \eqref{deriv} by 
$$ \frac{1}{2} \bigg | \int \Psi_{3x}u_x^2 \bigg | \leq \frac{1}{2} \|\Psi_{3x}\|_{L^\infty} \ \|u_x\|^2_{L^2} \leq \frac{c_1}{(R +\frac{1}{8} (t_0 -t))^{3 \theta}} \|u\|^2_{H^1} .$$
On the other hand, according to \eqref{psit} and  \eqref{young}, we can see that
\begin{eqnarray}\nonumber 
\displaystyle C &=& \frac{1}{2} \int \Psi_t u_x^2 = \frac{1}{2} \bigg [ -\vartheta\int \Psi_x u_x^2 \ +\frac{1}{8}\int \frac{\theta y}{R+\frac{1}{8}(t_0 -t)} \chi'(y) u_x^2 \bigg ] \nonumber \\
\displaystyle & \leq & (-\frac{\vartheta}{2}+2^{-6})\int \Psi_x u_x^2 +\frac{1}{8} \frac{2 \theta^2}{(R+\frac{1}{8}(t_0 -t))^{2-\theta}} \int y^2 \chi'(y) u_x^2 \bigg] \nonumber \\
\displaystyle & \leq &  (-\frac{\vartheta}{2}+2^{-6}) \int \Psi_xu_x^2 \ +\frac{c_2}{(R+\frac{1}{8}(t_0 -t))^{2-\theta}} \|u_x\|^2_{L^2}  \ . \nonumber
\end{eqnarray}
Eventually, the first term $\Theta_1$  thus  satisfies 
\begin{align}
\displaystyle \Theta_1  \le& - \frac{3}{2}   \int \Psi_x u^2_{2x}-(\frac{\vartheta}{2}-2^{-6}) \int \Psi_x u_x^2  + \gamma \int \Psi_x u_x \H u_{2x} \ + \gamma \int \Psi u_{2x} \H u_{2x} \nonumber  \\
\displaystyle &- \int \Psi u_x (u u_x)_x+ \bigg [ \frac{c_1}{(R + \frac{1}{8}(t_0 -t))^{3 \theta}} + \frac{c_2}{(R+\frac{1}{8}(t_0 -t))^{2-\theta}} \bigg] \|u\|^2_{H^1} \ . \quad \quad \label{1-H1}
\end{align}
We now move  on to 
\begin{eqnarray*}\nonumber
\displaystyle \Theta_2 = \frac{d}{dt} \int \Psi u \H u_x & = & \int \Psi_t u  \H u_x \ + \int \Psi u_t \H u_x \ + \int \H(\Psi u)_x u_t \nonumber \\
\displaystyle &:=& D + E + F \ .\nonumber
\end{eqnarray*}
Using \eqref{psit} and \eqref{young} first, then  the commutator estimate \eqref{comut} and the fact that $\H$ is $L^2$-unitary along with Young's inequalities, $D$ can be controlled by
\begin{eqnarray} 
\displaystyle |D|  &=& \bigg|-\frac{3}{4}\int \Psi_x u \H u_x \quad + \int \frac{\theta y }{R+\frac{1}{8}(t_0-t)} \chi'(y) u \H u_x \bigg| \nonumber \\
\displaystyle & \leq &\bigg|  \int \Psi_x u \H u_x \bigg|  \quad + \ \frac{c_2 \ \|u\|_{L^2} \|u_x\|_{L^2}}{(R+\frac{1}{8}(t_0-t))^{2-\theta}} \nonumber \\
\displaystyle & \leq & \bigg|  \int \sqrt{\Psi_x} u [\H, \sqrt{\Psi_x}] u_x \ + \ \int \sqrt{\Psi_x} u \H (\sqrt{\Psi_x} u_x)  \bigg| \ + \ \frac{c_2 \ \|u\|^2_{H^1}}{(R+\frac{1}{8}(t_0-t))^{2-\theta}} \nonumber \\
\displaystyle & \leq & C\,   \|\sqrt{\Psi_x}\|_{L^\infty} \|(\sqrt{\Psi_x})'\|_{L^\infty} \|u\|^2_{L^2} + \frac{1}{8} \int \Psi_x u_x^2 \ + \ 2 \int \Psi_x u^2 \ + \ \frac{c_2 \ \|u\|^2_{H^1}}{(R+\frac{1}{8}(t_0-t))^{2-\theta}} \nonumber \\
\displaystyle & \leq & \frac{1}{8} \int \Psi_x u_x^2 \ + \ 2 \int \Psi_x u^2 \ + \ \frac{c_2 \ \|u\|^2_{H^1}}{(R+\frac{1}{8}(t_0-t))^{2-\theta}}+ \frac{c_3 \ \|u\|^2_{L^2}}{(R+\frac{1}{8}(t_0-t))^{2\theta}} \; .\label{estD} 
\end{eqnarray}
Whereas, the second term  $E$ contributing in $\Theta_2$ can be developed as
\begin{eqnarray}
\displaystyle E &=& \int  \Psi(- u_{3x} -\gamma \H u_{2x} - u u_x) \H u_x \nonumber \\
\displaystyle & =&- \int \Psi u_{3x} \H u_x \ - \gamma  \int \Psi  \H u_{2x} \H u_x - \int \Psi u u_x \H u_x  \nonumber \\
\displaystyle &:=& E_1 + \gamma E_2 - \int \Psi u u_x \H u_x \ .\label{E1}
\end{eqnarray}
Firstly, 
\begin{eqnarray}
\displaystyle E_1 &=& \int \Psi u_{2x} \H u_{2x} \ - \int \H(\Psi_x u_{2x}) \,  u_x \nonumber \\
\displaystyle &= &  \int \Psi u_{2x} \H u_{2x} \ - \int \H (\sqrt{\Psi_x} (\sqrt{\Psi_x} u_x)_x) u_x \ + \int \H (\sqrt{\Psi_x} (\sqrt{\Psi_x})_x u_x) u_x \nonumber \\
\displaystyle &=& \int \Psi u_{2x} \H u_{2x} \ - \int \H (\sqrt{\Psi_x} (\sqrt{\Psi_x} u_x)_x) u_x \ + \int \H (\sqrt{\Psi_x} (\sqrt{\Psi_x})_x u_x) u_x \nonumber \\
\displaystyle &:=& \int \Psi u_{2x} \H u_{2x} + E_1^2 + E_1^3 \ .\label{E2}
\end{eqnarray}
We rewrite $ E_1^2 $ in the following way
\begin{eqnarray}\nonumber
\displaystyle E_1^2 &=& - \int \H (\sqrt{\Psi_x} (\sqrt{\Psi_x} u_x)_x) u_x \nonumber \\
\displaystyle &=& -\int [\H, \sqrt{\Psi_x}] (\sqrt{\Psi_x} u_x)_x u_x \ + \int \H (\sqrt{\Psi_x} u_x )_x \sqrt{\Psi_x} u_x \nonumber \\
\displaystyle & = & -\int [\H, \sqrt{\Psi_x}] (\sqrt{\Psi_x} u_x)_x u_x \ - \int \H (\sqrt{\Psi_x} u_x ) (\sqrt{\Psi_x} u_x)_x \nonumber \\
\displaystyle & = & -\int [\H, \sqrt{\Psi_x}] (\sqrt{\Psi_x} u_x)_x u_x \ -  \int \H (\sqrt{\Psi_x} u_x ) (\sqrt{\Psi_x} u_{2x}) - \int \H (\sqrt{\Psi_x} u_x ) ((\sqrt{\Psi_x})_x u_x) \ . \nonumber
\end{eqnarray}
Hence,  by the commutator estimate \eqref{comut}, \eqref{deriv2},  Holder's and Young's Inequalities, we get 
\begin{eqnarray}\nonumber
\displaystyle |E_1^2| & \leq & C\, \| (\sqrt{\Psi_x})_x\|_{L^\infty} \|\sqrt{\Psi_x} u_x\|_{L^2} \|u_x\|_{L^2} +  \|\sqrt{\Psi_x} u_x\|_{L^2}  \|\sqrt{\Psi_x} u_{2x}\|_{L^2}
\nonumber \\
& &  +  \displaystyle \|\sqrt{\Psi_x} u_x\|_{L^2} \ \|(\sqrt{\Psi_x})_x u_x\|_{L^2} \nonumber \\
\displaystyle & \leq &  \int \Psi_x u^2_{2x} +  \frac{1}{4}\int \Psi_x u_x^2 \ +
 (C+1) \|\sqrt{\Psi_x}\|_{L^2}  \|(\sqrt{\Psi_x})_x\|_{L^\infty} \|u_x\|_{L^2}^2  \nonumber \\
\displaystyle & \leq & \int \Psi_x u^2_{2x} +  \frac{1}{4} \int \Psi_x u_x^2 \ + \frac{c_4 \ \|u_x\|^2_{L^2}}{(R+\frac{1}{8}(t_0-t))^{2\theta}} \ . \label{E3}
\end{eqnarray}
In the same way, 
\begin{eqnarray}\nonumber
\displaystyle |E_1^3| & \leq & \|\sqrt{\Psi_x} (\sqrt{\Psi_x})_x u_x \|_{L^2} \ \|u_x\|_{L^2} \nonumber \\
 &\leq & \|\sqrt{\Psi_x}\|_{L^2}  \|(\sqrt{\Psi_x})_x\|_{L^\infty} \|u_x\|_{L^2}^2\le  \frac{c_4}{(R+\frac{1}{8}(t_0-t))^{2\theta}} \|u_x\|^2_{L^2} \ .\label{E4}
 \end{eqnarray}
Now, using  the fact that $ \H^2 = -Id$, we get
\begin{eqnarray*}\nonumber
\displaystyle E_2 &=& \int \H(\Psi \H u_{2x}) u_x = \int \H ( [\Psi,\H] u_{2x} )  u_{x}-\int \Psi u_{2x} u_x \\
& = &  \int \partial_x ([\Psi, \H] u_{2x}) \H u +\frac{1}{2} \int \Psi_x u_x^2 
\end{eqnarray*}
whereas the commutator estimate \eqref{besov1} together with  \eqref{deriv2} leads to 
\begin{eqnarray}
\displaystyle |E_2| &\le & \frac{1}{2} \int \Psi_x u_x^2 + \frac{c_5}{(R+\frac{1}{8}(t_0-t)^{})^{(2-\varepsilon)\theta}} \|u_x\|_{L^2}\|u\|_{L^2} \; . \label{E5}
\end{eqnarray}
Gathering \eqref{E1}-\eqref{E5}, we thus infer that 
\begin{align}
\displaystyle E =\int \Psi u_{2x} \H u_{2x} - \int \Psi u u_x \H u_x + \Lambda_1(u) \label{EstE}
\end{align}
where, recalling that $ R+\frac{1}{8}(t_0-t)\ge 1 $, it holds
\begin{align*}
 |\Lambda_1(u)| & \le \ \int \Psi_x u^2_{2x} + (  \frac{1}{4} +\frac{|\gamma|}{2}) \int \Psi_x u_x^2   +   \frac{c \ \|u\|^2_{H^1}}{(R+\frac{1}{8}(t_0-t))^{2\theta-\varepsilon}} \, .
\end{align*}
We still have to study the contribution of $F$;
\begin{eqnarray}\nonumber
\displaystyle F &=&  \int \H(\Psi \ u)_x (- u_{3x} - \gamma \H u_{2x} - u u_x) \nonumber \\
\displaystyle &=& - \int (\Psi u)_x \H u_{3x} \ - \gamma \int (\Psi u)_x \H^2 u_{2x} \ - \int \H (\Psi u)_x u u_x \nonumber \\
\displaystyle &=& \int (\Psi u)_{2x} \H u_{2x} \ + \gamma \int (\Psi u)_x u_{2x} \ -\int  \H (\Psi u)_x u u_x \nonumber \\
\displaystyle &:=& F_1 + \gamma F_2 + F_3 \ .\nonumber
\end{eqnarray}
We decompose $F_1$ as
\begin{eqnarray}\nonumber
\displaystyle F_1 &=& \int \Psi u_{2x} \H u_{2x} \ + 2 \int \Psi_x u_x \H u_{2x} \ + \int \Psi_{2x}\,  u \H u_{2x} \ ,
\end{eqnarray}
and notice that  the last term in the above right-hand side can be estimated as follows
\begin{eqnarray}\nonumber
\displaystyle \Bigl|\int \Psi_{2x}\,  u \H u_{2x}\Bigr| &=& \bigg | - \int \Psi_{3x} u \H u_x \ - \int \Psi_{2x} u_x \H u_x \bigg | \nonumber \\
\displaystyle & \leq & ( \|\Psi_{3x}\|_{L^\infty} + \|\Psi_{2x} \|_{L^\infty} ) \|u\|^2_{H^1} \nonumber \\
\displaystyle & \leq & \frac{c_1 \ \|u\|^2_{H^1}}{(R + (t_0 -t))^{3 \theta}} \ + \frac{c_5 \ \|u\|^2_{H^1}}{(R + (t_0 -t))^{2 \theta}} \ .\nonumber
\end{eqnarray}
On the other hand, $ F_2 $ shall be decomposed as 
\begin{eqnarray}\nonumber
\displaystyle F_2 &=& \int (\Psi u)_x u_{2x} = \int \Psi_x u u_{2x} \ + \int \Psi u_x u_{2x} \nonumber \\
\displaystyle &=& - \int \Psi_{2x} u u_x - \int \Psi_x u_x^2 \ -\frac{1}{2} \int \Psi_x u_x^2 \nonumber \\
\displaystyle &=& \frac{1}{2} \int \Psi_{3x} u^2 \ -\frac{3}{2} \int \Psi_x u_x^2  \nonumber
\end{eqnarray}
which leads to 
\begin{eqnarray}\nonumber
\displaystyle |\gamma F_2| &\leq & \frac{3 |\gamma|}{2} \int \Psi_x u_x^2 \ + \frac{|\gamma|}{2} \ \|\Psi_{3x}\|_{L^\infty} \ \|u\|^2_{L^2} \ .\nonumber
\end{eqnarray}
Hence, $ F $ satisfies 
\begin{align}
\displaystyle F =  \int \Psi u_{2x} \H u_{2x}  + 2 \int \Psi_x u_x \H u_{2x}  -\int  \H (\Psi u)_x u u_x + \Lambda_2(u) 
\label{estF}
\end{align}
with 
\begin{align*}
|\Lambda_2(u)|& \le   \frac{3 |\gamma|}{2} \int \Psi_x u_x^2  + \frac{c \ \|u\|^2_{H^1}}{(R + (t_0 -t))^{2 \theta}}  \; .
\end{align*}
Shortly, the contribution of the second term in \eqref{noH1} can be seen by gathering \eqref{estD}, \eqref{EstE}, and \eqref{estF} as
\begin{align}
\displaystyle -\frac{\gamma}{2} \Theta_2 
  \le  & \frac{\gamma}{2} \int  \H (\Psi u)_x u u_x \ -\frac{\gamma}{2} \int \H(\Psi u u_x) u_x \ -\gamma \int \Psi u_{2x} \H u_{2x} \ -\gamma \int \Psi_x u_x \H u_{2x} \nonumber \\
 & + |\gamma|  \int \Psi_x u^2_{2x} +3 |\gamma| \int \Psi_x u_x^2 +  \frac{c \ \|u\|^2_{H^1}}{(R+\frac{1}{8}(t_0-t))^{2-\theta}}  +  \frac{c \ \|u\|^2_{H^1}}{(R+\frac{1}{8}(t_0-t))^{(2-\varepsilon)\theta}} \, \label{2-H1}
\end{align}
Lastly, we tackle $\Theta_3$ which we develop as 
\begin{eqnarray}
 \Theta_3&=& -\frac{1}{6} \frac{d}{dt} \int \Psi u^3 = -\frac{1}{6} \int \Psi_t u^3 \ -\frac{1}{2} \int \Psi u^2 u_t := G + H \ . \nonumber
\end{eqnarray}
By  \eqref{psit}, \eqref{young} and \eqref{loc}, $G$ can be  controlled for $ R\ge R_0 $  in the following way
\begin{eqnarray}
\displaystyle |G|& = & \frac{1}{6} \bigg | -\frac{3}{4} \int \Psi_x u^3 \ + \int \frac{\theta \ y}{(R+\frac{1}{8}(t_0-t))} \chi'(y) \ u^3 \bigg | \nonumber \\
\displaystyle & \leq & \frac{1}{4}  \ \|u\|_{L^\infty (\text{supp}\,  \Psi_x(t))} \int \Psi_x u^2 \ + \frac{c_2}{(R+\frac{1}{8}(t_0-t))^{2-\theta}} \|u\|_{L^\infty (\text{supp}\, \Psi_x(t))}\ \|u\|^2_{L^2}  \nonumber \\
\displaystyle & \leq & 2^{-8}  \int \Psi_x u^2+ \frac{c}{(R+\frac{1}{8}(t_0-t))^{2-\theta}}  \|u\|^2_{L^2} \ , 
\label{estG}
\end{eqnarray}
where in the last step we make use of \eqref{support}.\\
Last in order, we proceed to estimate the final term $H$. 
\begin{eqnarray}\nonumber
\displaystyle H &=& \frac{1}{2} \int \Psi u^2 u_{3x} \ + \frac{\gamma}{2} \int \Psi u^2 \H u_{2x} \ + \frac{1}{2} \int \Psi u^3 u_x := H_1 + H_2 + H_3 \ ,\nonumber
\end{eqnarray}
where, integrating by parts and  making use of  \eqref{deriv2}, \eqref{support}  and \eqref{loc} we get for $ R\ge R_0 $  that
\begin{eqnarray}\nonumber
\displaystyle H_1 &=& - \frac{1}{2} \int \Psi_x u^2 u_{2x} \ - \int \Psi u u_x u_{2x} = \frac{1}{2} \int \Psi_{2x} u^2 u_x \ + \int \Psi_x u u_x^2 \ + \frac{1}{2} \int \Psi_x u u_x^2 \ + \frac{1}{2} \int \Psi u_x^3 \nonumber \\
\displaystyle & =& \frac{1}{2} \int \Psi_{2x} u^2 u_x \ + \frac{3}{2} \int \Psi_x u u_x^2 \ + \frac{1}{2} \int \Psi u_x^3 \nonumber \\
\displaystyle &\leq & \frac{1}{2} \|\Psi_{2x}\|_{L^\infty} \|u\|_{L^\infty}  \|u\|_{L^2} \|u\|_{H^1} \ + \frac{3}{2}  \|u\|_{L^\infty (\text{supp}\,  \Psi_x(t))} \int \Psi_x u_x^2 \ + \frac{1}{2} \int \Psi u_x^3  \nonumber\\
\displaystyle &\leq & \frac{1}{2} \int \Psi u_x^3+ 2^{-5}  \int \Psi_x u_x^2 \ +  \frac{c\, \|u\|_{H^1}^3}{(R+\frac{1}{8}(t_0-t))^{2\theta}} \label{estH1}
\; .
\end{eqnarray}
On the other hand,  \eqref{comut}, \eqref{support}  and \eqref{loc} lead to
\begin{eqnarray}\nonumber
\displaystyle H_2 &=& -\frac{\gamma}{2} \int \Psi_x u^2 \H u_x \ - \gamma \int \Psi u u_x \H u_x \nonumber \\
&=& -\frac{\gamma}{2} \int \sqrt{\Psi_x}  u^2 \, [\sqrt{\Psi_x},\H] u_x)  -\frac{\gamma}{2} \int \sqrt{\Psi_x}  u^2 \H (\sqrt{\Psi_x} u_x) \ - \gamma \int \Psi u u_x \H u_x \nonumber \\
\displaystyle & \leq &8 |\gamma| \int \Psi_x u^4 +\frac{|\gamma|}{2^4}  \int \Psi_x u_x^2+ \|[\sqrt{\Psi_x},\H] u_x\|_{L^2}^2 +\gamma \int \H (\Psi u u_x) u_x  \nonumber \\
\displaystyle & \leq & \gamma \int \H (\Psi u u_x) u_x+\frac{|\gamma|}{2^4} \int \Psi_x u_x^2+8 |\gamma| \|u\|_{L^\infty(\text{supp} \Psi'(t))}^2 \int \Psi_x u^2 + \|\partial_x(\sqrt{\Psi_x})\|_{L^\infty}^2 \|u\|_{L^2}^2 \nonumber \\
\displaystyle & \leq & \gamma \int \H (\Psi u u_x) u_x+\frac{|\gamma|}{2^4} \int \Psi_x u_x^2+2^{-8} \int \Psi_x u^2 + \frac{c\, \|u\|_{L^2}}{(R+\frac{1}{8}(t_0-t))^{3\theta}}\label{estH2}
\end{eqnarray}
while 
$$ H_3 = \int \Psi u^3 u_x = - \frac{1}{4} \int \Psi_x u^4 \quad \leq 0 \ .$$
Therefore, we end up with
\begin{eqnarray} \nonumber
\displaystyle H & \leq &  \frac{1}{2} \int \Psi u_x^3 +\gamma \int \H (\Psi u u_x) u_x +   \frac{1}{6} \int \Psi_x u_x^2 + 2^{-8}  \int \Psi_x u^2 
+ \frac{c\, \|u\|_{H^1}^2
(1+\|u\|_{H^1})}{(R+\frac{1}{8}(t_0-t))^{2\theta}} \ . \\
\label{estH}
\end{eqnarray}
In consequence, combining \eqref{estG} and \eqref{estH}, $\Theta_3$ is estimated by
\begin{eqnarray}
\displaystyle \Theta_3 &\leq & 
  \frac{1}{2} \int \Psi u_x^3 +\gamma \int \H (\Psi u u_x) u_x +   \frac{1}{6} \int \Psi_x u_x^2 + 2^{-6}  \int \Psi_x u^2 \nonumber \\
& & + \frac{c\, \|u\|_{H^1}^2
(1+\|u\|_{H^1})}{(R+\frac{1}{8}(t_0-t))^{2\theta}} + \frac{c\, \|u\|_{L^2}^2}{(R+\frac{1}{8}(t_0-t))^{2-\theta}} \ . \label{3-H1}
\end{eqnarray}
Drawing things to a close, we look at \eqref{1-H1}, \eqref{2-H1}, and \eqref{3-H1}; \eqref{noH1} now gives
\begin{eqnarray}
\frac{d}{dt} J^{+R}_{t_0}(t) & =&\Theta_1 -\frac{\gamma}{2} \Theta_2 + \Theta_3\nonumber \\
 &\leq & \frac{\gamma}{2} \bigg [ \int \H(\Psi u)_x u u_x \ + \int \H (\Psi u u_x) u_x \bigg ] + \frac{1}{2} \int \Psi u_x^3 \ - \int \Psi u_x (u u_x)_x \nonumber \\
\displaystyle &&+2^{-6} \int \Psi_x u^2 -(\frac{\vartheta}{2}-\frac{1}{6}-2^{-6}-3|\gamma|) \int \Psi_x u_x^2 -  (\frac{3}{2}-|\gamma|) \int \Psi_x u_{2x}^2 \nonumber \\
\displaystyle &&+ \frac{c\|u\|_{H^1}^2
(1+\|u\|_{H^1})}{(R+ (t_0 -t))^{(2-\varepsilon) \theta}} \ + \frac{c \ \|u\|^2_{H^1}}{(R+\frac{1}{8}(t_0-t))^{2-\theta}} \; .\label{finH1}
\end{eqnarray}
Regarding the first line of \eqref{finH1}, we start as follows
\begin{eqnarray} 
 \frac{\gamma}{2} \int \H(\Psi u)_x u u_x &+& \frac{\gamma}{2} \int \H (\Psi u u_x) u_x  = \frac{\gamma}{2} \int \H  (\Psi_x u+\Psi u_x) u u_x  -\frac{\gamma}{2} \int \Psi u u_x \H u_x \nonumber \\
 &=&  \frac{\gamma}{2} \int   u u_x \, [\H\partial_x  ,\Psi] u  +\frac{\gamma}{2}  \int   \H (\Psi_x u) u u_x  \nonumber \\
  &=&  -\frac{\gamma}{4} \int   u^2 \, \partial_x([\H\partial_x  ,\Psi] u)  -\frac{\gamma}{4}  \int   \H (\Psi_{2x} u) u^2
     -\frac{\gamma}{4}  \int   \H (\Psi_{x} u_x) u^2 \nonumber \\
 &=&  -\frac{\gamma}{4} \int   u^2 \, \partial_x([\H\partial_x  ,\Psi] u)  +\frac{\gamma}{4}  \int   \Psi_{2x}\,  u  \H(u^2)
     -\frac{\gamma}{4}  \int  [\H,\sqrt{\Psi_x}] ( \sqrt{\Psi_{x}}  u_x) \, u^2 \nonumber \\
     & & -\frac{\gamma}{4}  \int \H( \sqrt{\Psi_{x}} u_x) \sqrt{\Psi_x}u^2\nonumber \\
     &: =& \frac{\gamma}{4}(A_1+A_2+A_3+A_4) \; .
\end{eqnarray}
Then, \eqref{besov1} and \eqref{deriv2} together lead to 
\begin{eqnarray*}
|A_1|+|A_2| &\lesssim & \|\partial_x([\H\partial_x  ,\Psi] u)\|_{L^2} \ \|u^2\|_{L^2_x} + \|\Psi_{2x}\|_{L^\infty_x} \ \|u\|_{L^2_x} \ \|u^2\|_{L^2_x}\\ 
& \lesssim &  (\|D_x^{2-\varepsilon}\Psi\|_{L^\infty_x}+ \|D_x^{2+\varepsilon}\Psi \|_{L^\infty_x}+  \|\Psi_{2x}\|_{L^\infty_x} ) \|u\|_{L^2}^2 \ \|u\|_{H^1} \\
&\lesssim &  \frac{\|u\|_{H^1}^3}{(R+\frac{1}{8}(t_0-t))^{(2-\varepsilon)\theta}} \ ,
\end{eqnarray*}
while \eqref{comut} together with Young's inequality leads to 
\begin{eqnarray*}
|A_3| &\lesssim & \|\partial_x(\sqrt{\Psi_x})\|_{L^\infty_x} \  \|(\sqrt{\Psi_x} u_x\|_{L^2_x} \ \|u^2\|_{L^2_x}
\\ 
& \le  & 2^{-8} \int_{\R} \Psi_x u_x^2 +  \frac{c \ \|u\|_{H^1}^4}{(R+\frac{1}{8}(t_0-t))^{3\theta}}\; .
\end{eqnarray*}
Finally, by Young's inequality, \eqref{loc} and \eqref{support}, for $ R\ge R_0 $ it holds
\begin{eqnarray*}
|A_4|& \le&  \frac{1}{2} \|u\|_{L^\infty (\text{supp}\,  \Psi_x(t))} \Bigl(\int_{\R} \Psi_x u^2+\int_{\R} \Psi_x u_x^2 \Bigr) \\
& \le & 2^{-7} \Bigl(\int_{\R} \Psi_x u^2+\int_{\R} \Psi_x u_x^2 \Bigr) \ .
\end{eqnarray*}
Gathering the above estimates, for  $ R\ge R_0 $, we get 
\begin{eqnarray*}
\Bigl| \frac{\gamma}{2} \int \H(\Psi u)_x u u_x &+ &\frac{\gamma}{2} \int \H (\Psi u u_x) u_x\Bigr|  \leq  2^{-8} \Bigl(\int_{\R} \Psi_x u^2+\int_{\R} \Psi_x u_x^2 \Bigr)+  \frac{c \|u\|_{H^1}^3(1+\|u\|_{H^1})}{(R+\frac{1}{8}(t_0-t))^{(2-\varepsilon)\theta}} \ .
\end{eqnarray*}
On the other hand, thanks to \eqref{loc} and  \eqref{Psit}, the remaining part of the right-side  of the first line in \eqref{finH1} goes for $ R\ge R_0 $  like
\begin{eqnarray}
\displaystyle \frac{1}{2} \int \Psi u_x^3 \ - \int \Psi u_x (u u_x)_x &=& - \frac{1}{2} \int \Psi u_x^3 \ - \int \Psi u u_x u_{2x} = -\frac{1}{2} \int \Psi u_x^3 \ -\frac{1}{2} \int \Psi u \partial_x(u_x^2)  \nonumber \\
\displaystyle &= &  -\frac{1}{2} \int \Psi u_x^3 \ + \frac{1}{2} \int \Psi_x u u_x^2 \ + \frac{1}{2} \int \Psi u_x^3 \nonumber \\
\displaystyle & \leq & \frac{1}{2} \|u\|_{L^\infty(\text{supp} \, \Psi_x(t))} \int \Psi_x u_x^2 \nonumber \\
\displaystyle &\leq & 2^{-7} \int \Psi_x u_x^2 \ .\nonumber
\end{eqnarray}
Recalling \eqref{finH1} and using that $ |\gamma|\le 1/2$, we finally end up for $ R\ge R_0 $ with
\begin{eqnarray}
\frac{d}{dt} J_{t_0}^{+R}(t)&\leq & 2^{-5} \int \Psi_x u^2 +2 \int \Psi_x u_x^2 -   \int \Psi_x u_{2x}^2 \nonumber \\
\displaystyle &+& \frac{c\|u\|_{H^1}^2
(1+\|u\|_{H^1}^2)}{(R+ (t_0 -t))^{(2-\varepsilon) \theta}} \ + \frac{c \ \|u\|^2_{H^1}}{(R+\frac{1}{8}(t_0-t))^{2-\theta}} \; .
\label{final H11}
\end{eqnarray}
Combining this last estimate with \eqref{finalder}, we thus get 
\begin{eqnarray}
\frac{d}{dt} (4 I_{t_0}^{+R}(t)+J_{t_0}^{+R}(t))&\leq & -2^{-4} \int \Psi_x u^2 -2\int \Psi_x u_x^2 -   \int \Psi_x u_{2x}^2 \nonumber \\
\displaystyle &+& \frac{c\|u\|_{H^1}^2
(1+\|u\|_{H^1}^2)}{(R+ (t_0 -t))^{(2-\varepsilon) \theta}} \ + \frac{c \ \|u\|^2_{H^1}}{(R+\frac{1}{8}(t_0-t))^{2-\theta}} \; 
\label{final H1}
\end{eqnarray}
which consequently yields \eqref{monotH1} by taking $\theta=3/4 $ and integrating between $ t $ and $t_0 $. Finally, \eqref{monotH12} follows by exactly the same calculations, replacing $ \Psi $ by $\tilde{\Psi} $ defined in \eqref{defPsi2} and recalling that  \eqref{dodo} together with \eqref{loc}  ensure that  $|u(t,x)| < 2^{-6} $ for $ x\in \text{supp}\,  \tilde{\Psi}_x(t) $  with $ t\ge t_0$.
\end{proof}
\section{The Nonlinear Liouville theorem}

\subsection{Some properties of the branches of solitary waves for $ |\gamma| $ small}\label{subsectionsmooth}

\begin{proposition}\label{der}
There exist $ \gamma_0>0, \alpha_0>0 $, $M_0$ and $ \delta_0>0 $  such that 
\begin{equation}\label{smooth}
(\gamma,c) \mapsto Q_{\gamma,c}  \text{ is of class $C^1$ from }
 ]-\gamma_0,\gamma_0[ \times ]1-\delta_0, 1+\delta_0[ \text{ into } 
H^2(\R) \; .
\end{equation}
with $  \forall (\gamma,c_*)\in  ]-\gamma_0,\gamma_0[ \times ]1-\delta_0, 1+\delta_0[$, 
\begin{equation}\label{tr2}
\frac{d}{dc}_{|c=c_*} \| Q_{\gamma,c}\|_{L^2}^2>\alpha_0  
\end{equation}
and 
\begin{equation}\label{tr3}
 \sup_{x\in\R}  \Bigl(x^2 |Q_{\gamma,c}(x)|+ |x^3| (|Q_{\gamma,c}'(x)|+|Q_{\gamma,c}''(x)| )\Bigr) \le M_0\; .
\end{equation}
Moreover, for any $   (\gamma,c)\in ]-\gamma_0,\gamma_0[ \times ]1-\delta_0, 1+\delta_0[$, $Q_{\gamma,c} $ is orbitally stable.
\end{proposition}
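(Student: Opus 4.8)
The plan is to construct the branch $(\gamma,c)\mapsto Q_{\gamma,c}$ by the Implicit Function Theorem applied to \eqref{eqQ} in the even Sobolev scale, to deduce \eqref{tr2} by differentiating that equation in $c$, to prove \eqref{tr3} by a weighted fixed-point argument, and to obtain orbital stability from the known results. Concretely, working in the even subspaces $H^2_e(\R)\subset H^2(\R)$ and $L^2_e(\R)\subset L^2(\R)$, I would consider
$$ F(\gamma,c,\varphi)=c\varphi-\varphi''-\gamma\H\varphi'-\tfrac12\varphi^2 , $$
which, since $\H\partial_x=-D_x$ preserves evenness and maps $H^2$ into $H^1$ and $H^2(\R)$ is an algebra, is a $C^\infty$ (in fact polynomial) map from $\R\times\R\times H^2_e(\R)$ into $L^2_e(\R)$. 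At the base point $(0,1,Q_{0,1})$, with $Q_{0,1}=3\,\mathrm{sech}^2(\cdot/2)$ the KdV soliton, $F=0$ and $D_\varphi F(0,1,Q_{0,1})=L:=-\partial_x^2+1-Q_{0,1}$. Using the classical spectral picture of $L$ on $L^2(\R)$ -- self-adjoint, essential spectrum $[1,+\infty)$, a single negative eigenvalue, $\ker L=\mathrm{span}\{Q_{0,1}'\}$ with $Q_{0,1}'$ odd -- and that $L$ is a relatively compact perturbation of the isomorphism $-\partial_x^2+1:H^2(\R)\to L^2(\R)$, one sees that $L:H^2_e(\R)\to L^2_e(\R)$ is injective and Fredholm of index $0$, hence an isomorphism. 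The IFT then gives $\gamma_0,\delta_0>0$ and a $C^1$ (indeed $C^\infty$) map $(\gamma,c)\mapsto Q_{\gamma,c}\in H^2_e(\R)$ solving \eqref{eqQ} with $Q_{0,1}$ the KdV soliton at the base point; elliptic regularity promotes $Q_{\gamma,c}$ to $H^\infty(\R)$, and, shrinking $\gamma_0$ if necessary, the uniqueness in the IFT identifies it with the branch of \cite{Benett}, while its closeness to the stable KdV soliton together with the variational characterization forces it to coincide with the even ground state of \cite{ADM1}.

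For \eqref{tr2} I would set $L_{\gamma,c}:=-\partial_x^2+c-\gamma\H\partial_x-Q_{\gamma,c}=S_c''(Q_{\gamma,c})$ with $S_c=E+cM$; this is an isomorphism $H^2_e\to L^2_e$ depending continuously on $(\gamma,c)$ near $(0,1)$. Differentiating \eqref{eqQ} in $c$ yields $L_{\gamma,c}\,\partial_c Q_{\gamma,c}=-Q_{\gamma,c}$, whence
$$ \frac{d}{dc}\|Q_{\gamma,c}\|_{L^2}^2=2\langle Q_{\gamma,c},\partial_c Q_{\gamma,c}\rangle_{L^2}=-2\,\langle L_{\gamma,c}^{-1}Q_{\gamma,c},Q_{\gamma,c}\rangle_{L^2}, $$
a continuous function of $(\gamma,c)$. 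At $(\gamma,c)=(0,1)$ it equals the classical Weinstein/Grillakis--Shatah--Strauss quantity $-2\langle L^{-1}Q_{0,1},Q_{0,1}\rangle_{L^2}$ of KdV, which is strictly positive -- most directly because $\|Q_{0,c}\|_{L^2}^2=c^{3/2}\|Q_{0,1}\|_{L^2}^2$ has slope $\tfrac32\|Q_{0,1}\|_{L^2}^2>0$ at $c=1$. Continuity then gives $\alpha_0>0$ after a final shrinking of $\gamma_0,\delta_0$.

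For \eqref{tr3} I would rewrite \eqref{eqQ} as $Q_{\gamma,c}=(c-\partial_x^2)^{-1}\bigl(\gamma\H Q_{\gamma,c}'+\tfrac12 Q_{\gamma,c}^2\bigr)$, the convolution kernel $K_c(x)=\tfrac1{2\sqrt c}e^{-\sqrt c|x|}$ being exponentially localized. Since $Q_{\gamma,c}\in H^\infty$ and, away from the non-local term, the equation forces exponential decay, one first gets $Q_{\gamma,c},Q_{\gamma,c}',Q_{\gamma,c}''\in L^1(\R)$ with $\int_\R Q_{\gamma,c}'=0$, so $\H Q_{\gamma,c}'(x)=O(|x|^{-2})$, the $|x|^{-1}$ term of the Hilbert-transform expansion carrying the vanishing factor $\int Q_{\gamma,c}'$. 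A contraction in the weighted spaces $\{f:\sup_x(1+x^2)|f(x)|<\infty\}$ and $\{f:\sup_x(1+|x|^3)|f(x)|<\infty\}$, propagating the weights through $K_c\ast(\cdot)$ and through one and two derivatives of the identity, closes the estimate with $M_0$ uniform over the compact parameter box; alternatively one invokes the decay statements in \cite{Benett}, \cite{ADM1} and tracks their uniformity. Orbital stability is then immediate, since $]-\gamma_0,\gamma_0[\times]1-\delta_0,1+\delta_0[$ lies in the range $c>r_2\gamma^2$ treated in \cite{Benett} and \cite{ADM1}; alternatively it follows from the Grillakis--Shatah--Strauss criterion, using that $L_{\gamma,c}$ is self-adjoint with essential spectrum $[c-\gamma^2/4,+\infty)$ away from $0$, exactly one negative eigenvalue and kernel $\mathrm{span}\{Q_{\gamma,c}'\}$ (a small perturbation of that of $L$), together with \eqref{tr2}.

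The step I expect to be the main obstacle is the uniform decay \eqref{tr3}: one must show that the non-local term degrades the otherwise exponential decay to exactly $|x|^{-2}$ -- which relies on the vanishing mean of $Q_{\gamma,c}'$ and on precise Hilbert-transform asymptotics at infinity -- and then carry the $x^2$ and $|x|^3$ weights through the convolution and through two derivatives while keeping every constant uniform in $(\gamma,c)$. A secondary difficulty is the identification of the IFT branch with the ground state of \cite{ADM1}, which needs a short uniqueness argument near the KdV soliton.
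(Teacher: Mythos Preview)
Your proposal is correct and follows essentially the same approach as the paper: the Implicit Function Theorem applied to $F(\gamma,c,\psi)=c\psi-\psi''-\gamma\H\psi'-\tfrac12\psi^2$ on $H^2_e(\R)$ at the KdV soliton for \eqref{smooth}, continuity from the KdV value for \eqref{tr2}, and references to the literature for \eqref{tr3} and orbital stability. The only notable difference is that the paper handles the identification of the IFT branch with the ground states of \cite{ADM1} via the scaling $(\gamma,c,\psi)\mapsto(|\lambda|\gamma,\lambda^2 c,\lambda^2\psi(\lambda\cdot))$ to reduce to $c=1$, whereas you argue directly from IFT uniqueness and $H^2$-convergence of the ground states to $Q_{KdV}$; and the paper simply defers \eqref{tr3} to \cite{ADM1} (noting uniformity follows by inspection of that proof) rather than sketching the weighted fixed-point argument you outline.
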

\begin{proof} The proof of the existence, orbital stability and the decay of these solitary waves can be found for instance in \cite{ bonachen} or \cite{ADM1} (see also \cite{ABR}, \cite{Angulo}) . Even if the uniformity of the constant $M_0 $ appearing in \eqref{tr3} for $ (\gamma,c) $ in the above set is not clearly established in these papers, following step by step the proof of (\cite{ADM1}, Proposition 4.1) one can easily check that it holds, for instance,  as soon as we take $ \delta_0\le 1/4 $ and $ \gamma_0\le 1 $. 

 Now, we need to prove  the $C^1$-regularity of $(\gamma, c)\mapsto Q_{\gamma,c} $ that seems not reachable by the approach followed in \cite{ADM1}. 
 To this aim we give another proof of the existence of solitary wave profiles based on an  application of the Implicit Function Theorem as in \cite{ABR} (see also \cite{Maris} or \cite{Kabakouala Molinet 2018} for a similar application). 
 
For $ s\ge 0  $, we denote by $H^s_e(\R)=\left\{u\in H^{s}(\mathbb{R}):u(-\cdot)=u(\cdot)\right\}$ the closed subspace of even functions of $ H^s(\R) $. 
For $\gamma>0$, we define the map 
$$
T \quad :\quad 
\begin{array}{rcl}
\R^2 \times H^2_e(\R)&\rightarrow&  H^0_e(\R) \\
 (\gamma, c,\psi)&\mapsto  & c\psi - \partial^{2}_{x}\psi-\gamma \H \partial_{x}\psi-\frac{1}{2}\psi^{2}
 \end{array} \; .
 $$
We then have   $T(0,1,Q_{KdV})=0$, where $ Q_{KdV} $ is the  profile of the KdV soliton of speed $1$. 
 Moreover,  $T$ is  of class $C^{1}$ and it holds
\begin{equation}\label{exi1}
\partial_{\psi}T(0,1,Q_{KdV})=\mathcal{L}\vert_{H^2_e(\R)} ,
\end{equation}
%
where $\mathcal{L}_{\vert_{H^2_e(\R)}}$ denotes the restriction on $H^2_e(\R)$ of
$$
\begin{array}{rcl}
\mathcal{L} \; :  H^2(\R) & \rightarrow & L^2(\R) \ , \\
 \psi & \mapsto  & \psi - \partial^{2}_{x}\psi-Q_{KdV} \psi 
 \end{array} \; .
 $$
Clearly, the spectrum of $ \mathcal{L}\vert_{H^2_e(\R)}$  is included in the spectrum of  $\mathcal{L}$.
Now, it is well-known (see for instance \cite{Bona et al 1987} or \cite{Weinstein87}) that the essential spectrum of $\mathcal{L}$ is $ [1,+\infty[ $ and that $ \ker \mathcal{L}={\rm span}\,   (Q'_{KdV}) $. Since $ Q'_{KdV} $ is an odd function, it follows that $ \mathcal{L}\vert_{H^2_e(\R)}$ is an isomorphism from 
$H^2_e(\R) $ into $ H^0_e(\R) $. Therefore, the Implicit Function Theorem ensures that there exist  $ \gamma_0, \ \delta_0, \ \mu_0 > 0 $,  and a $ C^1$-map 
$$ R \, :\, ]-\gamma_0, \gamma_0[\times ]1-\delta_0, 1+\delta_0[ \ \rightarrow H^2_e(\R) $$
 such that for all $(\gamma,c,\psi) \in  
 ]-\gamma_0, \gamma_0[\times ]1-\delta_0, 1+\delta_0[ \times B(Q_{KdV},\mu_0)_{H^2_e(\R)} $,
 $$
 T(\gamma,c,\psi)=0 \Leftrightarrow \psi=R(\gamma,c) \; .
 $$
 For $ (\gamma,c)\in ]-\gamma_0, \gamma_0[\times ]1-\delta_0, 1+\delta_0[ $, we set $ \tilde{Q}_{\gamma,c} = R(\gamma,c) $. By construction,  $ \tilde{Q}_{\gamma,c}\in H^2_e(\R) $ is an even solution to \eqref{eqQ}.
 Now, according to \cite{ADM1}, there exists $ \tilde{\gamma}_0 >0 $ such that for $ |\gamma|< \tilde{\gamma}_0 $ there exists a unique even ground state solution $ Q_{\gamma,1} $  to \eqref{eqQ} with $c=1 $. Moreover,  $ Q_{\gamma,1} $ is orbitally stable
  and $  Q_{\gamma,1}\tendsto{\gamma\to 0} Q_{KdV} $ in $H^2(\R) $\footnote{Only the $ H^1(\R) $ convergence is actually  proven in \cite{ADM1}. However, since the boundedness of $ \{Q_{\gamma,1},  |\gamma|< \tilde{\gamma}_0\} $  in $H^1(\R) $ leads directly to the boundedness of this same set in $ H^3(\R) $, the convergence in $ H^2(\R) $ follows easily } . Since $ \psi $ is a solution to \eqref{eqQ} with $ c=1 $ and $ \gamma=\beta $ if and only if $ x\mapsto \lambda^2 \psi (\lambda\cdot ) $ is a solution to  \eqref{eqQ} with $ c=\lambda^2 $ and $ \gamma= |\lambda| \beta$, we deduce that for $ \epsilon>0 $ small enough there exist $ \gamma_{\epsilon}, \delta_{\epsilon}>0 $ such that for all $ |\gamma|< \gamma_\epsilon $ and all
  $c\in ]1-\delta_0, 1+\delta_0[ $, \eqref{eqQ} has a unique even ground state solution $ Q_{\gamma,c} $ that is orbitally stable and satisfies 
 $ \|Q_{\gamma,c} -Q_{KdV} \|_{H^2_e} < \epsilon $. Therefore, by uniqueness there exists $  \tilde{\gamma}_0, \ \tilde{\delta}_0>0 $ such that for 
   all $ |\gamma|<  \tilde{\gamma}_0 $ and all
  $c\in ]1- \tilde{\delta}_0, 1+ \tilde{\delta}_0[ $, $\tilde{Q}_{\gamma,c}=Q_{\gamma,c} $ and thus $ (c,\gamma)\mapsto Q_{\gamma,c} $ is of class $ C^1$ on this open set with values in $ H^2(\R) $.
 We thus infer from classical results on the KdV equation (see for instance \cite{Bona et al 1987} or  \cite{Weinstein87}) and  the $ C^1$-smoothness of $ (\gamma,c) \mapsto Q_{\gamma,c}$  that  there exist $  \check{\gamma}_0, \check{\delta}_0>0 $ such that for 
   all $ |\gamma|<  \check{\gamma}_0 $ and all
  $c_0\in ]1-\check{\delta}_0, 1+\check{\delta}_0[ $, 
  $$
\frac{d}{dc}_{\vert c=c_0}\|Q_{\gamma,c}\|_{L^2}^2= 2 (\partial_{c_{\vert c=c_0}} Q_{\gamma,c}, Q_{\gamma,c_0}) \ge   (\partial_{c_{\vert c=1}}Q_{c,KdV}, Q_{c,KdV}) >0 \; ,
  $$
  and 
  \begin{equation}\label{derder}
  \|Q_{\gamma,c}-Q_{\gamma,1}\|_{H^2} \le \sup_{c_*\in ]1-\check{\delta}_0, 1+\check{\delta}_0[ }
  \|\partial_{c_{\vert c=c_*}} Q_{\gamma,c}\|_{H^2} |c-1| \le 2 \|\partial_{c_{\vert c=1}} Q_{KdV,c}\|_{H^2}|c-1| \; .
 \end{equation}

 \end{proof}
\subsection{Almost monotonicity for a perturbed equation and $ H^1$-decay of its $H^1(\R) $-compact solutions}
We will need the following monotonicity result for a slight linear modification of  \eqref{MainEq}. 
\begin{proposition}\label{propdecay}
Let $ 0<b<2^{-6} $, $ x(\cdot) $ be a $ C^1$-function,  and $ a(\cdot) $ be a continuous function on $ \R $ with $ |\dot{x}(\cdot)-1|\ll 1  $ and $|a(\cdot)| \ll 1 $.
Let also $ |\gamma|\ll 1 $, $ |c-1|\ll 1$  and let $ {\tilde v}\in C(\R;H^1(\R)) $ be a solution to 
\begin{equation}\label{eqv}
\partial_t {\tilde v} + \partial^3_x{\tilde v}+\gamma \H \partial_x^2  {\tilde v} - \dot{x}(t)  \partial_x {\tilde v}+ a(t) Q_{\gamma,c}'+ \partial_x ( {\tilde v}  Q_{\gamma,c}) +  \frac{b}{2} \partial_x (  {\tilde v}^2)=0
\end{equation}
with $\displaystyle  \sup_{t\in\R} \|{\tilde v}(t)\|_{H^1} \lesssim 1 $ such that   $ \forall \delta>0 $, $ \exists R_\delta>0 $ such that 
\begin{equation}\label{dse}
\forall t\in \R , \quad \int_{|x|>R_\delta} {\tilde v}^2(t,x)+{\tilde v}_x^2(t,x) \, dx < \delta \; .
\end{equation}
Then  there exists a  constant $ C>0 $, that does not depend on $b$, $\gamma$, $c$, $x(\cdot)$, $a(\cdot) $ or the  map $ \delta\mapsto R_\delta $ such that for any $ R>0 $ it holds 
$$
\forall t\in \R , \quad \int_{|x|>R} {\tilde v}^2(t,x)+{\tilde v}_x^2(t,x) \, dx < C R^{-1/4} \; .
$$
\end{proposition}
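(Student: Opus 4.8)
The plan is to transfer the $L^2$- and $H^1$-almost-monotonicity estimates of Propositions \ref{L2Monot} and \ref{H1Monot} to the perturbed equation \eqref{eqv}, and then to run a bootstrap turning the purely qualitative localisation \eqref{dse} into the quantitative decay $R^{-1/4}$; note that the whole difficulty is that the constant in the conclusion may not see the (unquantified) map $\delta\mapsto R_\delta$, so the bootstrap must forget the input. First, \eqref{dse} together with the Sobolev embedding $H^1\hookrightarrow L^\infty$ (used on half-lines after a localisation) yields a fixed $R_0\ge 1$ with $\|{\tilde v}(t)\|_{L^\infty(|x|>R_0)}\le 2^{-6}$ for all $t$; this replaces the localisation hypothesis \eqref{loc}, now with the trivial centre $x(\cdot)\equiv 0$, the genuine drift being carried inside \eqref{eqv} by the term $-\dot x(t)\partial_x{\tilde v}$.

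I would then repeat the computations of Sections 2 and 3 with $u$ replaced by ${\tilde v}$ and with the cutoffs $\Psi$, $\tilde\Psi$ of \eqref{defPsi}, \eqref{defPsi2} now centred at $\pm R$ instead of $x(t_0)\pm R$. The linear Benjamin part and the non-local terms are handled exactly as before via \eqref{comut}, \eqref{besov1}, \eqref{deriv2}. The new terms are: $(i)$ the transport $-\dot x\,\partial_x{\tilde v}$, which upon integration against ${\tilde v}\Psi$ produces $-\tfrac{\dot x}{2}\int{\tilde v}^2\Psi_x$ (and its $H^1$-analogues); since $\Psi_x\ge 0$ and $\dot x>0$ this only \emph{reinforces} the coercive term $-\tfrac{\vartheta}{2}\int{\tilde v}^2\Psi_x$ coming from $\partial_t\Psi$, and $|\dot x-1|\ll1$ means no harmful correction appears; $(ii)$ the cubic $\tfrac b2\partial_x({\tilde v}^2)$, treated like the Benjamin nonlinearity $uu_x$ using $b<2^{-6}$ and the $L^\infty$-smallness of ${\tilde v}$ on $\mathrm{supp}\,\Psi_x$; $(iii)$ the terms $a(t)Q_{\gamma,c}'$ and $\partial_x({\tilde v}Q_{\gamma,c})$, where the point is that $\mathrm{supp}\,\Psi$ and $\mathrm{supp}\,\Psi_x$ are far to the right of the origin, so by the uniform decay \eqref{tr3} the factors $Q_{\gamma,c}$, $Q_{\gamma,c}'$ are small there: part of $\partial_x({\tilde v}Q_{\gamma,c})$ is absorbed into the coercive $\int{\tilde v}^2\Psi_x$ (after fixing $R_0$ large) and the remainder, together with $a(t)Q_{\gamma,c}'$, integrates in time to an acceptable error thanks to the $|x|^{-3}$-decay of $Q_{\gamma,c}'$, $|a|\ll1$ and $\sup_t\|{\tilde v}(t)\|_{H^1}\lesssim1$. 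As in \eqref{finalder} and \eqref{finH1}, taking $\theta=3/4$ this gives, on any time interval along which $\mathrm{supp}\,\Psi_x\subset\{|x|>R_0\}$, almost-monotonicity for $I^{+R}_{t_0}$ and $4I^{+R}_{t_0}+J^{+R}_{t_0}$ (and their $-2R$ counterparts) with error $\lesssim R^{-1/4}$, keeping the coercive gains $\int{\tilde v}^2\Psi_x$ and $\int{\tilde v}_x^2\Psi_x$.

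The smallness of $|\gamma|$ enters through coercivity of $4M+E$ (quadratic part with symbol $2+\tfrac12\xi^2+\tfrac\gamma2|\xi|\gtrsim 1+\xi^2$), which lets one pass from $4I^{+R}_{t_0}+J^{+R}_{t_0}$ back to $\int_{x>\,\cdot}({\tilde v}^2+{\tilde v}_x^2)$, the cubic remainder being controlled again by the $L^\infty$-smallness of ${\tilde v}$ far out. Finally I would set $f(R):=\sup_{t\in\R}\int_{|x|>R}({\tilde v}^2+{\tilde v}_x^2)(t)$ — finite, non-increasing, tending to $0$ but with no a priori rate — and bootstrap: applying the above monotonicity backwards over a short time window of length $\sim R^{3/4}$ (so that $\mathrm{supp}\,\Psi_x$ stays near $x\sim R$, hence in $\{|x|>R_0\}$ once $R$ is large) and \emph{using} rather than discarding the coercive gain term $\int\!\!\int{\tilde v}^2\Psi_x$, one derives a recursive inequality relating $f(R)$ to $f$ at a slightly smaller radius with a genuine contraction; iterating it $O(R^{1/4})$ times, the contraction annihilates the $O(1)$-bounded initial value while the geometric sum of the $O(R^{-1/4})$ errors stays $O(R^{-1/4})$, giving $f(R)\lesssim R^{-1/4}$. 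The left tail $x<-R$ is obtained the same way from the $-2R$ estimates and the reversed-time monotonicity \eqref{mono2}, \eqref{monotH12}. The main obstacle is exactly this last bootstrap: one must extract a strict gain out of an estimate that is only \emph{almost}-monotone, and carefully balance the cutoff width $\sim R^{3/4}$, the length of the time window, and the error sizes so that the iteration closes precisely at the rate $R^{-1/4}$; the secondary technical points are the treatment of $\H$ inside the $H^1$-monotonicity (needing the refined commutator bound \eqref{besov1}) and the absorption of the only polynomially decaying $Q_{\gamma,c}$-terms.
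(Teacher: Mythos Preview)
Your monotonicity adaptation is broadly right, but the endgame you propose --- the bootstrap turning the qualitative \eqref{dse} into a rate --- is both unnecessary and, as stated, does not close. The paper avoids it entirely by a change of frame: set $v(t,x)=\tilde v(t,x-x(t))$, so that $v$ solves \eqref{eqvt}, an equation that differs from the Benjamin equation only by the \emph{small} term $-(\dot x-1)\partial_x v$ and the two $Q$-terms (now with $Q_{\gamma,c}(\cdot-x(t))$). One can then literally re-use the cutoffs $\Psi,\tilde\Psi$ of \eqref{defPsi}, \eqref{defPsi2} centred at $x(t_0)\pm R$. A key point you half-missed is that the cubic term carries the factor $b<2^{-6}$, so in estimates like \eqref{est4}, \eqref{estG}--\eqref{estH2} one uses $|b\,\tilde v|\le 2^{-6}\|\tilde v\|_{H^1}\lesssim 2^{-6}$ directly and \emph{never} invokes \eqref{loc}; hence the almost-monotonicity holds for all $R\ge 1$, and no $R_0$ from \eqref{dse} enters the constant. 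The contributions of the two $Q$-terms are integrable in time because in this frame $Q$ sits at $x(\tau)$ while $\Psi$ is supported to the right of $x(t_0)+R+\vartheta(\tau-t_0)$, and $\dot x\ge 5/6>\vartheta$ forces a separation $\gtrsim R+\tfrac18(t_0-\tau)$; the decay \eqref{tr3} then gives time-integrals $\lesssim R^{-1/2}$.

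The decisive step is then trivial: in the resulting inequality
$\theta\,\tilde I^{+R}_{t_0}(t_0)+\tilde J^{+R}_{t_0,b}(t_0)\le \theta\,\tilde I^{+R}_{t_0}(t)+\tilde J^{+R}_{t_0,b}(t)+C_\theta R^{-1/4}$
one simply lets $t\to-\infty$. Because $\dot x>\vartheta$, the support of $\Psi(t)$ drifts to $+\infty$ relative to the centre $x(t)$ of $v$, and the purely qualitative \eqref{dse} forces the right-hand side to vanish; this is where the map $\delta\mapsto R_\delta$ is used --- only to assert ``$\to 0$'', never a rate --- so the surviving constant $C_\theta$ is automatically independent of it. The left tail follows from the invariance $(t,x)\mapsto(-t,-x)$.

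Your scheme, by contrast, keeps $\tilde v$ fixed near the origin and centres the cutoff at $\pm R$. Then either the cutoff is stationary and the time-integral of the $Q$-errors diverges, or it moves and eventually enters the region $|x|\le R_0$ where neither $Q$ nor $\tilde v$ is small. This is why you are forced into short time windows and an iteration. But the ``genuine contraction'' you need does not come for free from the coercive term $\int\!\!\int\tilde v^2\Psi_x$: that quantity is a weighted average of $\tilde v^2$ over a strip of width $\sim R^{3/4}$ and has no a priori lower bound by $f(R)$, so the recursion you sketch is just $f(R)\lesssim f(R-cR^{3/4})+O(R^{-1/4})$, which carries no contraction and after $O(R^{1/4})$ steps lands you at $f(R_0)+O(1)$ --- with $R_0$ depending on $\delta\mapsto R_\delta$. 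The change of frame plus the limit $t\to-\infty$ is precisely the device that ``forgets'' $R_\delta$ without any iteration.
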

\begin{proof} We first prove an almost monotonicity result for the associated energy.  
Since $ |\dot{x}(t)-1|\ll 1 $, by the simple change of frame  $v(t,x)=\tilde{v}(t,x-x(t)) $ we recover the same situation as in Propositions \ref{L2Monot} and \ref{H1Monot}  with three additional  terms:
$$ -( \dot{x}(t)-1)  \partial_x v \ , \   a(t) Q_{\gamma,c}' (\cdot-x(t))\ , \ \text{and} \ \partial_x ( v  Q_{\gamma,c}(\cdot-x(t))) \ ,$$
 and a coefficient $ 0<b<2^{-6} $  in front of the nonlinear term. 
Indeed, $ v $ satisfies 
\begin{equation}\label{eqvt}
\partial_t v + \partial^3_xv +\gamma \H \partial_x^2   v  -( \dot{x}(t)-1)  \partial_x v+ a(t) Q_{\gamma,c}'(\cdot-x(t))+ \partial_x ( v  Q_{\gamma,c}(\cdot-x(t))) +  \frac{b}{2} \,\partial_x (  v^2)=0 \; .
\end{equation}
It is thus  not too hard to check that our calculations in the proofs of Propositions \ref{L2Monot} and \ref{H1Monot} still hold for $v$ by replacing the local energy $J_{t_0}^R $ by the rescaled local energy 
$$\widetilde{J}_{t_0, b}^R :=  \frac{1}{2} \int \Psi \ v_x^2 \ - \frac{\gamma}{2} \int \H(\Psi \ v)_x v \  - \frac{b}{6} \int \Psi \ v^3  \; ,$$
where $ \Psi $ is defined as in \eqref{defPsi}. However, at this stage  it is very important to notice that, in sharp contrast to the proofs in  Propositions \ref{L2Monot} and \ref{H1Monot}, we do not need to make use of \eqref{loc} in these calculations  but  instead make use of the fact that $ 0<b<2^{-6} $ to get the desired estimates on some  terms involving  $ \partial_x(v^2)$ as in \eqref{est4}, \eqref{estG}-\eqref{estH2}. Hence, the estimates  hold for all $ R\ge 1$ and not only for $ R \ge R_0$.

Therefore, it remains to control the contributions of the three new terms to the evolution of $\displaystyle \tilde{I}_{t_0}^R:= \int_{\R} \Psi v^2 $ and $\widetilde{J}_{t_0, b}^R $. First, it is easy to check that for $ \sup_{t\in\R} |\dot{x}(t)-1| $ small enough, the contribution of the first term $-( \dot{x}(t)-1)  \partial_x v$  can be absorbed by the two first terms of the right-hand side member of \eqref{final H1} and is thus harmless.
In fact, it holds 
$$
\Bigl| (\dot{x}(\cdot)-1) \int_{\R} \Psi v  \partial_x v\Bigr|=\Bigl|\frac{(1-\dot{x}(\cdot) )}{2}\Bigr|\int_{\R} \Psi_x\,  v^2 
\le \displaystyle\sup_{t\in\R} |\dot{x}(t)-1| \int_{\R}  \Psi_x\,  v^2 \ ,
 $$
 and in the same way, integrating by parts and making use of \eqref{Est3}, we get 
 \begin{align*}
 \Bigl| (\dot{x}(\cdot)-1)& \int_{\R} [\Psi (-v_{xx} +\frac{\gamma}{2}  \H v_x  + \frac{b}{2} v^2 )+\frac{\gamma}{2} \H(\Psi v)_x]  v_x\Bigr|\\
 &=\Bigl|  (\dot{x}(\cdot)-1)\Bigl( \int_{\R}\Psi_x v_x^2-\frac{b}{6} \int_{\R} \Psi_x v^3-\frac{\gamma}{2} \int_{\R} \Psi_x v \H v_x\Bigr)\Bigr|\\
 &\lesssim  \displaystyle\sup_{t\in\R} |\dot{x}(t)-1|(\int_{\R}  \Psi_x v^2+ \int_{\R}  \Psi_x v_x^2) +\frac{\|v\|^2_{L^2}}{(R+\frac{1}{8}(t_0-t))^{3/2}} 
 \end{align*}
which is acceptable since $ \displaystyle \int_t^{t_0}\frac{\|v\|^2_{L^2}}{(R+\frac{1}{8}(t_0-t))^{3/2}} \lesssim R^{-1/2} $.
 It thus remains to estimate the contribution of the two other new terms. 
\vspace*{2mm}\\
{\it Contribution to the estimate on $   \tilde{I}_{t_0}(t_0) - \tilde{I}_{t_0}(t) $: }\\
According to the above discussion, it remains to estimate 
$$
-\int_{t}^{t_0} \Bigl[a(\tau) \int_{\R} Q_{\gamma,c}' (\cdot-x(\tau))\Psi(\tau) v(\tau) + \int_{\R} \partial_x (Q_{\gamma,c} (\cdot-x(\tau)) v(\tau))  \Psi(\tau) v(\tau) \Bigr] d\tau  \; .
$$
Note that according to \eqref{psit}, for $ t\le t_0$, $ \Psi(t,x)=0 $ for $x\le x(t_0) +R +\frac{5}{8}(t-t_0) $ whereas $Q_{\gamma,c} (\cdot-x(t))$ is centered at $ x(t)\le x(t_0)+\frac{5}{6}(t-t_0)$ since $|\dot{x}(\cdot)-1|\ll 1 $.
In view of the decay of $Q_{\gamma,c}' $ given by \eqref{tr3}, after applying Cauchy-Schwarz inequality in space,  the first term above can thus be bounded by 
\begin{align*}
\Bigl| \int_{t}^{t_0} a(\tau) \int_{\R} Q_{\gamma,c}'  (\cdot-x(\tau))\Psi(\tau) v(\tau) \, d\tau   \Bigr|
& \lesssim \sup_{\tau\in\R} \|v(\tau)\|_{L^2} \int_t^{t_0} \|Q_{\gamma,c}' (\cdot-x(\tau)) \Psi (\tau)\|_{L^2_x} \, d\tau\\
& \lesssim \int_t^{t_0}\Bigl(  \int_{R} |Q'_{\gamma,c}(x)| (R+\frac{1}{8} (t_0-t))^{-3} \, dx \Bigr)^{1/2} \\
 & \lesssim  \|Q_{\gamma,c}'\|_{L^1}^{1/2} \int_{t}^{t_0} \Bigl(R+\frac{1}{8}(t_0-\tau )\Bigr)^{-3/2}d\tau \lesssim R^{-1/2}\; .
\end{align*}
For the second term, we first notice that by integration by parts
\begin{align}\nonumber
\displaystyle  \int_{\R}   \partial_x(Q_{\gamma,c}  (\cdot-x(\tau))v) \ ( \Psi v) \ dx & =   \int_{\R}  Q_{\gamma,c}'  (\cdot-x(\tau))\ v^2 \Psi \ dx +  \int_{\R}  Q_{\gamma,c}  (\cdot-x(\tau))\Psi  v v_x \ dx \nonumber \\
\displaystyle & =  \frac{1}{2}  \int_{\R}  Q'_{\gamma,c}  (\cdot-x(\tau))\ v^2  \Psi \ dx  - \frac{1}{2}  \int_{\R}  Q_{\gamma,c}  (\cdot-x(\tau))v^2  \Psi_x \ dx \ .\nonumber
\end{align}
Besides, the decay of $Q_{\gamma,c} $ and $Q_{\gamma,c}' $ given by \eqref{tr3} lead to
$$
\Bigl| \displaystyle \int_{t}^{t_0}  \int_{\R}  Q'_{\gamma,c}  (\cdot-x(\tau))\ v^2 \ \Psi \ dx \ d\tau  \Bigr|\lesssim  \int_{t}^{t_0}   \Bigl(R+\frac{1}{8}(t_0-\tau )\Bigr)^{-3} \ \|v\|^2_{L^2} \ dt' \lesssim R^{-2} 
$$
and
$$
\displaystyle \bigg | \int_{t}^{t_0} \int Q_{\gamma,c} (\cdot-x(\tau)) \ v^2 \ \Psi_x \ dx \ dt \bigg | \lesssim  \int_{t}^{t_0}   \Bigl(R+\frac{1}{8}(t_0-\tau )\Bigr)^{-2} \ \|v\|^2_{L^2} \ dt' \lesssim R^{-1} .
$$
This proves that 
$$
  \int_{\R} (\Psi v^2)(t_0) - \int_{\R} (\Psi v^2)(t) \lesssim  R^{-1/4} 
$$
which in turn leads to 
\begin{equation}\label{decL2}
\sup_{t\in\R} \int_{|x|>R} v^2(t,x) dx\lesssim R^{-1/4} 
\end{equation}
by letting $ t\to -\infty $, using \eqref{dse} together with \eqref{Psit22},  and the fact that $ \dot{x}\ge 5/6 $. \vspace*{2mm}\\
{\it Contribution to the estimate of $\widetilde{J}_{b}(t_0)-\widetilde{J}_{b}(t)$:}\\
 To lighten  the notations, we write $ Q_{\gamma,c} $ for 
$  Q_{\gamma,c}(\cdot-x(\tau)) $ in the following calculations. 
Here, it remains to control 
\begin{align*}
K_1(t)& = \Bigl| \int_{\R} \Psi v_x Q_{\gamma,c}''  + b \int_{\R} \Psi v^2 Q_{\gamma,c}'  -\frac{\gamma}{2} \Bigl( \int_{\R} \Psi  Q_{\gamma,c}' \H v_x 
+ \int_{\R} \H ( \Psi u)_x Q_{\gamma,c}' \Bigr) \Bigr| \\
&\le \Bigl| \int_{\R}  \Psi \ v_x   Q_{\gamma,c}'' \Bigr|+ \Bigl|\int_{\R}  \Psi v^2  Q_{\gamma,c}' \Bigr| +\Bigl| \int_{\R} \Psi Q_{\gamma,c}' \H v_x   \Bigr|+\Bigl| \int_{\R}  (\Psi v)_x \H Q_{\gamma,c}'\Bigr| 
\end{align*}
and 
$$
K_2(t)=\Bigl| \int_{\R}  \Psi \ v_x   \partial_x^2 ( v  Q_{\gamma,c}) \Bigr|+ \Bigl|\int_{\R}
 \Psi v^2  \partial_x ( v  Q_{\gamma,c}) \Bigr| +\Bigl| \int_{\R}  \Psi \H v_x \,  \partial_x ( v  Q_{\gamma,c})    \Bigr|+\Bigl| \int_{\R}  \H (\Psi v)_x  \partial_x ( v  Q_{\gamma,c})  \Bigr| \ . $$
To bound $ K_1 $, we notice  that since $ Q_{\gamma,c} $ satisfies \eqref{eqQ} then $ \H  Q_{\gamma,c}' $ has at least the same decay rate as $  Q_{\gamma,c} $. Therefore, the same considerations as above lead to 
$$
\int_{t}^{t_0} K_1(\tau)\, dt \lesssim  \int_{t}^{t_0}   \Bigl(R+\frac{1}{8}(t_0-\tau )\Bigr)^{-2} \ \|v\|_{H^1} \ d\tau \lesssim R^{-1} .
$$
We now tackle $K_2 $. The three first terms in $ K_2$ can be treated as above by using the decay of $Q_{\gamma,c}$ and its derivatives.
The treatment of the fourth term is more tricky. We proceed as in the proof of Proposition \ref{H1Monot} to get 
\begin{align*}
\Bigl|\int_{\R}  \H (\Psi v)_x  \partial_x ( v  Q_{\gamma,c}) \Bigr| &= \Bigl|
\int_{\R}  \H (\Psi_x v)  \partial_x ( v  Q_{\gamma,c})+
\int_{\R}  \H(\Psi v_x) \partial_x ( v  Q_{\gamma,c})\Bigr|\\
& =\Bigl|  \int_{\R} [\H,\Psi_x] v \,  \partial_x ( v  Q_{\gamma,c})+
\int_{\R}  \H v \, \Psi_x  \partial_x ( v  Q_{\gamma,c}) \\
& \hspace*{10mm}+\int_{\R} [\H,\Psi] v_x \partial_x ( v  Q_{\gamma,c})+\int_{\R} \Psi \H v_x \,  \partial_x ( v  Q_{\gamma,c})\Bigr|\\
& \le \Bigl| \int_{\R} [\H,\Psi_x] v \,  \partial_x ( v  Q_{\gamma,c})\Bigr| +
 \Bigl| \int_{\R}  \H v \, \Psi_x  \partial_x ( v  Q_{\gamma,c})\Bigr| \\
& \hspace*{10mm}+ \Bigl| \int_{\R} \partial_x([\H,\Psi] v_x)   v  Q_{\gamma,c}\Bigr|+ \Bigl| \int_{\R} \Psi \H v_x \,  \partial_x ( v  Q_{\gamma,c})
 \; \end{align*}
 so that  the same arguments as above together with the commutator estimates \eqref{comut},\eqref{besov1} and \eqref{deriv2} lead to 
 $$
 \Bigl|\int_{\R}  \H (\Psi v)_x  \partial_x ( v  Q_{\gamma,c}) \Bigr|\lesssim   \Bigl(R+\frac{1}{8}(t_0-\tau )\Bigr)^{-
 \frac{3}{4}(2-)} \; .
 $$
Eventually, we end up with 
$$
\int_{t}^{t_0} K_2(\tau) \, d\tau \lesssim R^{-\frac{1}{4}} \;.
$$
In view of Propositions \ref{L2Monot} and \ref{H1Monot}, this ensures that for $ t\le t_0 $ and $ \theta\ge 4 $, 
\begin{equation}\label{tftf}
 \theta  \ \tilde{I}^{+R}_{t_0}(t_0)+ \tilde{J}^{+R}_{t_0,b}(t_0) \leq  \theta \ \tilde{I}^{+R}_{t_0}(t) + \tilde{J}^{+R}_{t_0,b}(t)  +C_\theta \ R^{-\frac{1}{4}} \;  .
\end{equation}
Letting $ t $ tends to $-\infty $ and taking  \eqref{dse} into account, we acquire
\begin{equation}\label{nono}
 \theta  \tilde{I}^{+R}_{t_0}(t_0)+ \tilde{J}^{+R}_{t_0,b}(t_0) \leq  C_\theta  \ R^{-\frac{1}{4}} \ ,  
\end{equation}
  where we used  that  $ \displaystyle |\int_{\R} \Psi v \H v_x | \le \|v\|_{H^1} \|v\|_{L^2(\text{supp}\, \Psi)} \tendsto{t \to -\infty} 0 $ thanks to \eqref{Psit}, \eqref{loc} and the fact that $ \dot{x} \ge 5/6 $. Now, noticing that \eqref{comut} together with \eqref{deriv2} leads to 
\begin{align}
|\int_{\R} \Psi v \H v_x| & \le \|\sqrt{\Psi} v\|_{L^2} ( +\|\sqrt{\Psi} v_x\|_{L^2} )\nonumber \\
& \le 2 \|\sqrt{\Psi} v\|_{L^2}^2+\frac{1}{4} \|\sqrt{\Psi} v_x\|_{L^2}^2 +\Bigl\|\frac{\Psi'}{ \sqrt{\Psi}}\Bigr\|_{L^\infty} \|v\|_{L^2}^2 \nonumber  \\
& \le 2 \|\sqrt{\Psi} v\|_{L^2}^2+\frac{1}{4} \|\sqrt{\Psi} v_x\|_{L^2}^2 +C \, R^{-3/4} \; , \label{estgg}
\end{align} 
and that, since by  hypothesis  and Sobolev embedding $ \|v \|_{L^\infty}\le  \|v  \|_{H^1} =  \|\tilde{v} \|_{H^1} \lesssim 1 $,   it holds that $|v ^3|\le  \|v \|_{H^1} v^2 $, we infer that taking  $ \theta_0\ge 4+\|v\|_{H^1}  $,
\begin{equation}\label{nono1}
 \theta_0 \tilde{I}^{+R}_{t_0}(t_0)+ \tilde{J}^{+R}_{t_0,b}(t_0) \ge \frac{1}{4} \int_{\R} \Psi (t_0) (v^2+v_x^2)(t_0)- C\,   R^{-1/4} \; .
\end{equation}
Combined with \eqref{nono}, this ensures that 
$$
\int_{\R} \Psi (t_0) (v^2+v_x^2)(t_0) \lesssim R^{-1/4} \; 
$$
which in turn shows the decay of the local $ H^1$-norm at the right in view of \eqref{Psit22}.
Finally, the decay in the left of the $H^1$-compact solution $ \tilde{v}$ follows from the invariance of \eqref{MainEq} under the transformation $ (t,x) \mapsto (-t,-x) $.
\end{proof}
\subsection{Modulation around the solitary wave}
\begin{lemma}\label{parameter}
Let $u\in C(\R;H^1(\R)) $ be a solution of \eqref{MainEq}. There exist $C, \delta_0 >0$ such that for any $|\gamma| \ll 1 $, $ |c-1|\ll1 $ and $0<\delta <\delta_0$, if $u $ satisfies
\begin{equation}
\inf_{r \in \mathbb{R}} \|u(t) - Q_{\gamma,c} (.-r)\|_{H^1} \leq \delta,  \quad \forall \ t \in \mathbb{R} ,
\end{equation}
then there exists a $ C^1$-function $\rho: \mathbb{R} \ni t \mapsto\rho(t) \in\R $ such that
\begin{equation} \label{decomp}
\eta(t,\cdot) := u(t,\cdot+ \rho(t)) - Q_{\gamma,c}(\cdot)
\end{equation}
enjoys the following for all $t \in \mathbb{R}$ :
\begin{equation} \label{decomp2}
 \|\eta(t)\|_{H^1} \leq C \ \delta\; , 
\end{equation}
\begin{equation} \label{decomp3}
\displaystyle \int Q_{\gamma,c}'(x) \ \eta(t,x) \ dx=0 , 
\end{equation}
\begin{equation} \label{decomp4}
 \text{and} \quad \quad |\dot{\rho}(t) -c| \leq C \ \|\eta(t)\|_{L^2}\; . 
\end{equation}

\end{lemma}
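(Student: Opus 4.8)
The plan is the standard modulation argument via the implicit function theorem, adapted so that all constants stay uniform for $(\gamma,c)$ in a fixed neighborhood of $(0,1)$. First I would introduce the scalar functional $\Phi(r,w):=\int_{\R}Q'_{\gamma,c}(y)\,w(y+r)\,dy$, defined for $w\in H^1(\R)$ and $r\in\R$; since $\int Q'_{\gamma,c}Q_{\gamma,c}=0$, requiring \eqref{decomp3} is precisely asking $\Phi(\rho(t),u(t,\cdot))=0$. The map $\Phi$ is of class $C^1$ on $\R\times H^1(\R)$, with $\Phi(0,Q_{\gamma,c})=0$ and $\partial_r\Phi(0,Q_{\gamma,c})=-\int Q''_{\gamma,c}Q_{\gamma,c}=\|Q'_{\gamma,c}\|_{L^2}^2$, which by Proposition \ref{der} (convergence $Q_{\gamma,c}\to Q_{KdV}$ in $H^2$) is bounded below uniformly for $|\gamma|,|c-1|$ small. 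The implicit function theorem then furnishes $\delta_1>0$ and a $C^1$ map $A$ from the $H^1$-ball $B(Q_{\gamma,c},\delta_1)$ into a small interval around $0$ with $A(Q_{\gamma,c})=0$, $\Phi(A(z),z)=0$, and $|A(z)|\le C\|z-Q_{\gamma,c}\|_{H^1}$.

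Given the solution $u$, at each $t$ I pick a near-infimizer $s_0(t)$ with $\|u(t,\cdot+s_0(t))-Q_{\gamma,c}\|_{H^1}\le 2\delta$ and, assuming $2\delta<\delta_1$, set $\rho(t):=s_0(t)+A\big(u(t,\cdot+s_0(t))\big)$. By translation invariance $\Phi(\rho(t),u(t,\cdot))=0$, i.e.\ \eqref{decomp3} holds, and
$$\|\eta(t)\|_{H^1}=\|u(t,\cdot+\rho(t))-Q_{\gamma,c}\|_{H^1}\le 2\delta + C\,|A(u(t,\cdot+s_0(t)))|\le C\delta,$$
which is \eqref{decomp2}. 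A uniqueness/connectedness argument (the local choices of the modulation parameter coincide on overlaps as long as they remain in the IFT range) shows that $\rho$ is globally well defined and continuous on $\R$.

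Then I would upgrade $\rho$ to a $C^1$ function and establish \eqref{decomp4} simultaneously, by differentiating the relation $\int Q'_{\gamma,c}(y)\,u(t,y+\rho(t))\,dy=0$ in time, which gives
$$\dot\rho(t)\int Q'_{\gamma,c}(y)\,u_x(t,y+\rho(t))\,dy=-\int Q'_{\gamma,c}(y)\,\partial_t u(t,y+\rho(t))\,dy.$$
Writing $w:=u(t,\cdot+\rho(t))=Q_{\gamma,c}+\eta(t)$, the coefficient on the left equals $-\int Q''_{\gamma,c}w=\|Q'_{\gamma,c}\|_{L^2}^2-\int Q''_{\gamma,c}\eta(t)$, hence is bounded below for $\|\eta\|_{H^1}$ small. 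On the right, using that $\H$ commutes with translations and the equation \eqref{MainEq}, one has $\partial_t u(t,\cdot+\rho)=-(w_{3x}+\gamma\H w_{2x}+ww_x)$; the pure $Q_{\gamma,c}$-contribution $\int Q'_{\gamma,c}(Q'''_{\gamma,c}+\gamma\H Q''_{\gamma,c}+Q_{\gamma,c}Q'_{\gamma,c})$ equals $c\,\|Q'_{\gamma,c}\|_{L^2}^2$ after differentiating \eqref{eqQ} once, while each remaining term is linear or quadratic in $\eta$ and, after integrating the derivatives onto the smooth functions $Q^{(k)}_{\gamma,c}$, $\H Q'''_{\gamma,c}$ (all in $L^2$ since $Q_{\gamma,c}\in H^\infty$), is bounded by $C(\|\eta\|_{L^2}+\|\eta\|_{L^2}^2)$. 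Dividing and using $\|\eta\|_{L^2}\le C\delta$ yields $\dot\rho(t)=c+O(\|\eta(t)\|_{L^2})$, i.e.\ \eqref{decomp4}.

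The main obstacle is making the time-differentiation rigorous, since $u$ is only known to be $C(\R;H^1)$: from \eqref{MainEq} one has $\partial_t u\in C(\R;H^{-2})$, and the functionals above pair $\partial_t u$ against the fixed, smooth, rapidly decaying function $Q'_{\gamma,c}(\cdot-\rho(t))$, so $t\mapsto\int Q'_{\gamma,c}(y)\,u(t,y+r)\,dy$ is $C^1$ for each fixed $r$; combining this with the $C^1$-dependence of $A$ on its $H^1$-argument (or, alternatively, regularizing $u$ and passing to the limit) gives the differentiability of $\rho$ and the displayed formula for $\dot\rho$. The only other point requiring care is the uniformity of $C$ and $\delta_0$ in $(\gamma,c)$, which follows from $Q_{\gamma,c}\to Q_{KdV}$ in $H^2$ together with the uniform decay bound \eqref{tr3}.
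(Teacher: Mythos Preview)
Your approach is essentially the same as the paper's: implicit function theorem around $Q_{\gamma,c}$ to get the modulation parameter, uniform lower bound on $\|Q'_{\gamma,c}\|_{L^2}^2$ via the $H^2$-convergence $Q_{\gamma,c}\to Q_{KdV}$, and then differentiation of the orthogonality relation to obtain \eqref{decomp4}. The only point worth flagging is your treatment of the $C^1$-regularity of $\rho$: the sentence ``combining this with the $C^1$-dependence of $A$ on its $H^1$-argument'' does not quite close the argument, since $t\mapsto u(t)$ is only continuous in $H^1$, not $C^1$, so the chain rule with $A:H^1\to\R$ is unavailable. The paper handles this by re-applying the implicit function theorem to the same functional but viewed as a $C^1$ map on $\R\times H^{-2}(\R)$ (which is legitimate because $Q_{\gamma,c}\in H^\infty$); by uniqueness the resulting map coincides with $A$ on the $H^1$-neighborhood, hence $\rho(t)=s(u(t))$ is the composition of a $C^1$ map on $H^{-2}$ with $u\in C^1(\R;H^{-2})$, and is therefore $C^1$. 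Your alternative of regularizing and passing to the limit would also work but is heavier.
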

\begin{proof}
The proof follows from standard arguments (see for instance \cite{Weinstein87} or  \cite{Bona et al 1987}  for the regularity of $ \rho(\cdot)$). For $r >0$ and $ v\in H^1(\R)$, we denote by $B(v,r)_{H^1}$ the open ball of $ H^1(\R)$ centered at $ v $,  that is 
$$ B(v,r)_{H^1} := \{w \in H^1(\R); \ \|w - v\|_{H^1} \leq r \} \; .$$
Let us fix  $z \in \mathbb{R}$ and  define  the application $ F_z $ from $\R \times H^1(\R) $ into $ \R $ by 
$$F_z(s,u):= \int Q_{\gamma,c}'(x-z) (u(x+s)- Q_{\gamma,c}(x-z)) \ dx .$$
We notice that $F_z(0,Q_{\gamma,c} (\cdot-z))=0$ and that for $ |c-1|\ll 1 $ and $|\gamma|\ll 1$, 
\begin{equation} \label{low}
\displaystyle \frac{\partial}{\partial_s} F_z(s,u)|_{s=0,u=Q_{\gamma,c}(\cdot-z)} = \int (Q^{'}_{\gamma,c})^2\ge \frac{1}{2} 
 \int (Q^{'}_{KdV})^2>0,
\end{equation}
where we made use of \eqref{smooth} to have a lower bound that does not depend  on  $ \gamma$ nor on $ c$.
 Hence, by the Implicit Function Theorem, there exists $ \delta_z>0 $ and a unique  $C^1$-mapping , 
 $$ s_z: B(Q_{\gamma,c}(\cdot-z),\delta_z)_{H^1} \longrightarrow \mathbb{R} $$
  such that $F_z(s_z(u),u) =0$ on $ B(Q_{\gamma,c}(\cdot-z),\delta_z)_{H^1} $. Furthermore, there exits $ C_z>0 $ such that  
 \begin{equation}\label{tv}
 |s_z(u)| \leq C_z \ \|u-Q_{\gamma,c}(\cdot-z)\|_{H^1} \; .
 \end{equation}
 Note that by translation invariance, $ \delta_z $ and $C_z $ do not depend on $ z\in\R $, that is $ \delta_z=\delta$ and $C_z=C$. Therefore by uniqueness,  we can define a $ C^1$-mapping from 
$$ V_{\delta} := \{u \in H^1(\R); \ \inf_{r \in \mathbb{R}} \|u - Q_{\gamma,c}(.-r)\|_{H^1} <\delta \} $$
into $ \R $ by setting 
$s(u)= z+s_z(u) $ for any $ u\in B(Q_{\gamma,c}(\cdot-z),\delta)_{H^1} $. 
 Now, to prove that $ \rho= s\circ u $ is of class $ C^1 $, we have to repeat this argument but by considering $ \tilde{F}_z\, :\, \R \times H^{-2}(\R) \to \R $ defined by 
 $$\tilde{F}_z(s,u):=  \langle Q_{\gamma,c}'(\cdot-z) \, ,\,  (u(x+s)- Q_{\gamma,c}(\cdot-z)) \rangle_{H^2,H^{-2}} \;.$$
 Since $Q_{\gamma,c}\in H^\infty(\R) $, $\tilde{F}_z $ is well defined and of class $ C^1 $, with   $\tilde F_z\equiv F_z $ on $\R\times H^1(\R) $. By the Implicit Function Theorem, we thus deduce that there exists $ \tilde{\delta}>0 $ and a $  C^1$-function $ \tilde{s} $ from 
 $$ \tilde{V}_{\tilde{\delta}} := \{u \in H^{-2}(\R) ; \ \inf_{r \in \mathbb{R}} \|u - Q_{\gamma,c}(.-r)\|_{H^{-2}} <\tilde{\delta} \} $$
 into $\R $, 
such that 
$$
 \langle Q_{\gamma,c}'(\cdot-y) \, ,\,  (u- Q_{\gamma,c}(\cdot-y)) \rangle_{H^2,H^{-2}} \Leftrightarrow y=\tilde{s}(u) , \quad \forall u\in \tilde{V}_{\tilde{\delta}} \; .
$$
 We set $ \tilde{\tilde{\delta}}= \delta\wedge \tilde{\delta} $. By uniqueness, it holds $ s\equiv \tilde{s} $ on $\displaystyle \cup_{z\in\R} B(Q_{\gamma,c}(\cdot-z),\tilde{\tilde{\delta}}) $ and thus $ s $ is also a $ C^1 $-function on $\displaystyle \cup_{z\in\R} B(Q_{\gamma,c}(\cdot-z),\tilde{\tilde{\delta}}) $ equipped with the metric inducted by $ H^{-2}(\R) $. Since by the equation \eqref{MainEq}, $ u\in C(\R;H^1(\R)) $ forces 
 $ u\in C^1(\R;H^{-2}(\R)) $, it follows that $ t\mapsto \rho(t)= s(u(t)) $ is a   $ C^1$-function on $ \R $. Setting 
$$\eta(t,):= u(t,\cdot+\rho(t)) - Q_{\gamma,c} \ ,$$ 
we get by construction 
$$ \int Q_{\gamma,c}'(x) \ \eta(t,x) \ dx=0 \ , \quad \forall t\in\R \ ,$$
and \eqref{tv} together with \eqref{smooth} ensures that there exists $C>0$ such that
$$ \|\eta(t)\|_{H^1} \leq C \inf_{z\in\R}  \|u(t)-Q_{\gamma,c}(\cdot-z)\|_{H^1} \ \leq C \ \delta .$$
Finally, setting $R(t)=Q_{\gamma,c}(\cdot-\rho(t)) $ and then  $w(t)=u(t)-R(t)= \eta(\cdot-\rho(t))$, and differentiating \eqref{decomp3} with respect to time, we get 
\begin{eqnarray}
\int_{\R} w_t Q_{\gamma,c}'(\cdot-\rho(t)) &=&  \dot{\rho}(t)  \int_{\R}  w Q_{\gamma,c}''(\cdot-\rho(t))\nonumber \\
&=&  -\dot{\rho}(t)  \int_{\R}  w_x Q_{\gamma,c}'(\cdot-\rho(t))\nonumber \\
&=& (c-\dot{\rho}(t)) O(\|w\|_{H^1}) + c \, O(\|w\|_{H^1}) \; . \label{kj}
\end{eqnarray}
Substituting $ u $ by $w+R $ in \eqref{MainEq} and using that $R $ satisfies 
$$
\partial_t R +(\dot{\rho}-c) \partial_x R +\partial_x R^3  + \gamma \ \H \partial_x^2 R  + R  \partial_x R =0 \, ,
$$
we infer that $ w$ satisfies 
\begin{align*}
w_t-(\dot{\rho}-c) \partial_x R =-\partial_x^3 w -\gamma  \H \partial_x^2 w-\frac{1}{2} \partial_x \Bigl( (w+R)^2 -R^2\Bigr) \;.
\end{align*}
Taking the $ L^2$-scalar product of this last equality with $ \partial_x R $, integrating by parts and using  \eqref{kj}, we eventually get 
$$
|\dot{\rho}-c| \Bigl( \| Q_{\gamma,c}'\|^2_{L^2} +O(\|\eta\|_{H^1}) \Bigr) = O(\|\eta\|_{H^1})
$$
which proves \eqref{decomp4} thanks to \eqref{low}.
\end{proof} 

\subsection{Proof of the Nonlinear Liouville theorem}
We proceed by contradiction. First, we notice that since $(t,x) \mapsto Q_{\gamma,c}(x-ct) $ is a solution to \eqref{MainEq} and \eqref{MainEq} is translation invariant, a solution $ u $ to \eqref{MainEq} is of the form $Q_{\gamma,c}(x-z-ct)$ for some $ y\in\R $ if and only if  there exist $ t\in \R $ and $ y\in \R $ such that $u(t,\cdot) =Q_{\gamma,c}(\cdot-y) $. 

Therefore, if the statement of Theorem \ref{NL} does not hold, there exist sequences $(\gamma_n)$ with $|\gamma_n| \searrow 0$, $(\varepsilon_n) \searrow 0$, $ (u_n) \in C(\mathbb{R}; H^1(\mathbb{R})) \cap L^{\infty}(\mathbb{R}, H^1(\mathbb{R}))$ solutions of \eqref{eqbeta} with $\gamma=\gamma_n $ and $(\tilde{x}_n) $ with $ \tilde{x}_n \,:\, \R\to\R $ such that 
\begin{enumerate}
\item $ \forall n\in \N $,
\begin{equation}\label{hypo1}
\|u_n-Q_{\gamma_n,1}(\cdot -\tilde{x}_n(t))\|_{H^1} \le  \varepsilon_n \; ,
\end{equation}
\item $ \forall n\in\N, \; \forall \delta>0 $, $ \exists R_{n,\delta}>0 $ such that 
\begin{equation}\label{hypo2}
\forall t\in \R , \quad \int_{|x-\tilde{x}_n(t)|>R_{n,\delta}} u_n^2(t,x)+u_{n,x}^2(t,x) \, dx < \delta \; ,
\end{equation}
\item For any $ n\in\N $,  any $(t,y)\in\R^2 $ and any $ c>0$ with $ |c-1 |\ll 1 $,
\begin{equation} \label{comment}
u_n(t,\cdot) \neq Q_{\gamma_n,c}(\cdot-y) .
\end{equation}
\end{enumerate}
We separate the proof into different steps for the sake of clarity:\\
{\bf Step 1.} First renormalization and  Modulation. \\
We start by making  use of Proposition \ref{der} to find a solitary wave profile with the same $ L^2$-norm as $ u_{0,n} $ just by slightly modulating the speed.
Indeed, according to Proposition \ref{der},  $ \frac{d}{dc}_{|c=c_0} \|Q_{\gamma,c} \|_{L^2}^2 \ge
\alpha_0>0$ for $ 0\le \gamma\le \gamma_0 $ and $c_0\in ]1-\delta_0,1+\delta_0[ $, we hence deduce that for any $  0\le \gamma\le \gamma_0 $, $c \mapsto  \|Q_{\gamma,c} \|_{L^2}^2 $ is an increasing bijection from $  ]1-\delta_0,1+\delta_0[ $ 
  into 
  $$ ]\|Q_{\gamma,1-\delta_0}\|^2_{L^2}, \|Q_{\gamma,1+\delta_0}\|^2_{L^2}[ \supset 
 ] \|Q_{\gamma,1}\|^2_{L^2}-\alpha_0 \delta_0, \|Q_{\gamma,1}\|^2_{L^2}+\alpha_0 \delta_0[ \; .
 $$ Therefore, for  any $n\in\N $ large enough,  there exists $c_n\in\R $ with 
 \begin{equation}\label{estc}
  |c_n-1|\lesssim \varepsilon_n 
  \end{equation}
  such that 
\begin{equation}\label{renor1}
\|Q_{\gamma_n,c_n}\|_{L^2} = \|u_{0,n}\|_{L^2}  \; .
\end{equation}
It is worth noticing that \eqref{derder} then ensures that 
$$
\|Q_{\gamma_n,c_n}-Q_{\gamma_n,1}\|_{L^2} \lesssim \varepsilon_n \ ,
$$
and thus by the triangular inequality, we have 
\begin{equation}\label{renor2}
\sup_{t\in\R} \|u_n(t)-Q_{\gamma_n,c_n}(\cdot-\tilde{x}_n(t))\|_{L^2}  \lesssim \varepsilon_n
  \; .
\end{equation}
Forthwith, we apply Lemma \ref{parameter} to $ u_n $ around $Q_{\gamma_n,c_n}$. There exists $ x_n \;: \; \R\to \R $ of class $ C^1 $ such that 
\begin{equation}\label{mod1}
\sup_{t\in\R} |x_n(t)- \tilde{x}_n(t)| \lesssim \varepsilon_n \; .
\end{equation}
Moreover, setting $ \eta_n(t) = u_n(t) -Q_{\gamma_n,c_n}(\cdot-x_n(t)) $, it holds 
\begin{equation}\label{eta n}
\sup_{t\in\R} |\dot{x}_n(t)-1|\lesssim\sup_{t\in\R} \|\eta_n (t) \|_{H^1} \lesssim \varepsilon_n, \quad  
  \text{and} \quad \int_{\R} \eta_n(t,\cdot) Q'_{\gamma_n,,c_n}(\cdot-x_n(t)) =0 \; .
\end{equation}
From \eqref{hypo2}, \eqref{mod1} and the decay of $ Q_{\gamma_n,c_n}$, we infer that for any $ \delta>0 $, there exists $  R_{n,\delta}>0 $ such that 
$$
\sup_{t\in\R} \int_{|x-x_n(t)|>R_{n,\delta}} \eta_n^2(t)+ \eta_{n,x}^2(t) < \delta \; .
$$
Moreover, the  conservation of the $ L^2$-norm leads to 
$$2\int_{\R} \eta_n(t) Q_{\gamma_n,c_n} (\cdot-x_n(t))=\|u(t)\|_{L^2}^2 -\|Q_{\gamma_n,c_n}\|_{L^2}^2 -\|\eta_n\|_{L^2}^2=
\underbrace{\|u_0\|_{L^2}^2 - \|Q_{\gamma_n,c_n}\|_{L^2}^2}_{=0} -\|\eta_n\|_{L^2}^2
$$
and thus 
\begin{equation}\label{esta}
\bigg |\int_{\R} \eta_n(t) Q_{\gamma_n,c_n} (\cdot-x_n(t)) \bigg | \lesssim \|\eta_n\|_{L^2}^2 \; .
\end{equation}
Finally, since  $ u_n(t) \neq Q_{\gamma_n,c} (\cdot-y)$  for any $(t,y)\in\R^2 $ and  $ |c-1|\ll 1 $ by hypothesis, we infer  from \eqref{estc} and the definition of $ \eta_n $ that  $ \eta_n(t)\neq 0 $ for all $t\in\R $ and $ n\in\N $. 
 
We notice that the equation satisfied by $ \eta_n $ reads 
$$
\partial_t \eta_n +\partial^3_x \eta_{n} +\gamma_n \H \partial_x^2 \eta_n + (c_n-\dot{x}_n) Q_{\gamma_n,c_n}'(\cdot-x_n(t))+ (\eta_n Q_{\gamma_n,c_n}(\cdot-x_n(t))_x
 + \eta _n \eta_{n,x} =0
$$
where we made use of the fact that $ Q_{\gamma_n,c_n} $ satisfies 
$$
-c_nQ_{\gamma_n,c_n}'+Q_{\gamma_n,c_n}^{'''} +\gamma_n \H Q_{\gamma_n,c_n}^{''}+\frac{1}{2} (Q_{\gamma_n,c_n}^2)'=0 \; .
$$
Therefore $ \tilde{\eta}_n(t,\cdot)= \eta(t, \cdot+x_n(t)) $ satisfies 
$$
\partial_t \tilde{\eta}_n + \partial^3_x \tilde{\eta}_{n} +\H \partial_x^2  \tilde{\eta}_n - \dot{x}_n  \partial_x \tilde{\eta}_n+ (c_n-\dot{x}_n)Q_{\gamma_n,c_n}'+ \partial_x ( \tilde{\eta}_n  Q_{\gamma_n,c_n}) +  \tilde{\eta}_n \partial_x  \tilde{\eta}_n =0
$$
where, again  for any $ \delta>0 $, there exists $  R_{n,\delta}>0 $ such that 
 $$
\sup_{t\in\R} \int_{|x-x_n(t)|>R_{n,\delta}} \tilde{\eta}_n^2(t)+ \tilde{\eta}_{n,x}^2(t) < \delta\; .
$$
{\bf Step 2.} Second renormalization. \\
As we proved that $ \eta_n(t) \neq 0 $ for all $t\in\R $, we can consider $ t_n\in \R $ such that 
\begin{equation} \label{eta tn}
\|\eta_n(t_n) \|_{H^1}\ge \frac{1}{2} \sup_{t\in\R} \|\eta_n(t)\|_{H^1}>0 \; .
\end{equation}
Set 
$$
w_n(t)= \frac{\eta(t+t_n)}{\displaystyle \sup_{\tau\in\R}\| \eta(\tau)\|_{H^1}} \quad \text{and} \quad \tilde{w}_n(t)=w_n(t,\cdot+x_n(t))
= \frac{\tilde{\eta}_n(t)}{\displaystyle \sup_{\tau\in\R} \|\tilde{\eta}(\tau)\|_{H^1}}  \; .
$$
$\tilde{w}_n $ then satisfies 
\begin{equation}\label{eqtw}
\partial_t \tilde{w}_n + \partial^3_x \tilde{w}_{n} +\gamma_n \H \partial_x^2  \tilde{w}_n -\dot{x}_n  \partial_x \tilde{w}_n+ (c_n-\dot{x}_n) Q_{\gamma_n,c_n}'+ \partial_x ( \tilde{w}_n  Q_{\gamma_n,c_n}) +  \frac{\delta_n}{2} \partial_x (  \tilde{w}_n^2)=0
\end{equation}
with $ \delta_n= \sup_{\tau\in\R}  \| \tilde{\eta}_n(t) \|_{H^1} \tendsto{n\to \infty} 0 $.
Consequently, thanks to \eqref{eta n}, \eqref{esta} and \eqref{eta tn}, we have 
\begin{equation}\label{syst1}
\left\{
\begin{array}{l}
\|\tilde{w}_n(t)\|_{H^1} \le 1 , \quad t\in \R \vspace{2mm}\\
\| \tilde{w}_n(0)\|_{H^1} \ge 1/2 \;  \vspace{2mm}\\
\displaystyle \int_{\R} \tilde{w}_n(t) Q'_{\gamma_n,c_n} =0 , \quad \forall t\in\R  \vspace{2mm}\\
\sup_{\tau\in\R} \displaystyle |\int_{\R} \tilde{w}_n(\tau) Q_{\gamma_n,c_n} |\lesssim \sup_{\tau\in\R} \|\eta_n(\tau)\|_{H^1} \tendsto{n\to\infty} 0 
\end{array}
\right.\quad , 
\end{equation}
and  for any $ \delta>0 $, there exists $  R_{n,\delta}>0 $ such that 
 \begin{equation}\label{renorm1}
\sup_{t\in\R} \int_{|x|>R_{n,\delta}} \tilde{w}_n^2(t,x)+ \tilde{w}_{n,x}^2(t,x) \, dx < \delta\; .
\end{equation}
At this stage, we notice that since $ \delta_n \to  0 $, for $ n $ large enough $ \tilde{w}_n $ satisfies the hypotheses of  Proposition \ref{propdecay}. Therefore, there exists $ C>0 $ such that for any $n\ge 0 $ large enough  and any $ R>0 $,
\begin{equation}\label{renorm2}
\sup_{t\in\R} \int_{|x|>R} \tilde{w}_n^2(t,x)+ \tilde{w}_{n,x}^2(t,x) \, dx \le C R^{-1/4} \ .
\end{equation}
{\bf Step 3.} Passing to the limit and applying the linear Liouville theorem for the KdV equation. \\
We pass to the limit up to a subsequence on $(\tilde{w}_n)_{n\ge 0} $. Thanks to  \eqref{syst1},\eqref{renorm1} and \eqref{renorm2}, there exists $ \tilde{w} \in C(\R;L^2(\R))\cap L^\infty(\R;H^1(\R)) $ such that, up to a subsequence, 
\begin{equation}\label{contr9}
 \tilde{w}_n \longrightarrow  \tilde{w} \quad \text{in} \ C([-T,T], L^2(\R)) ,
\end{equation}
\begin{equation}\label{contr10}
 \tilde{w}_n \rightharpoonup  \tilde{w}\quad \text{in} \ C_w([-T,T], H^1(\R)).
\end{equation}
This ensures that $ \tilde{w} $ satisfies 
\begin{equation}\label{limit2}
\sup_{t\in\mathbb{R}} \int_{|x|>R} \tilde{w}^2(t,x)+ \tilde{w}_{x}^2(t,x) \, dx \le C R^{-1/4} \ .
\end{equation}
 Furthermore, since $ |\gamma_n|\to 0  $, $ c_n\to 1$  and $ |\dot{x}_n-1|\to 0 $ on $ [-T,T] $, the continuity of the map $ (\gamma,c)\mapsto Q_{\gamma,c} $ with values in $ H^2(\mathbb{R}) $ (see Subsection \ref{subsectionsmooth})  leads to 
 $$
\int_{\mathbb{R}} \tilde{w}(t) Q_{KdV}'=\int_{\mathbb{R}} \tilde{w} (t)Q_{KdV}=0 , \quad \forall t\in\mathbb{R} ,
$$
and $ \tilde{w} $ satisfies on $ \mathbb{R}^2 $, 
$$
\partial_t \tilde{w} -\tilde{w}_x +\partial^3_x \tilde{w} +\partial_x( Q_{KdV} \, \tilde{w})=0 \ .
$$
 The linear Liouville property for the KdV equation (see \cite{Martel 2006}, proof of Theorem 1) accordingly forces $ \tilde{w}=0 $.  \\
{\bf Step 4.} The final contradiction argument. \\
 Let us now show  that $ \tilde{w}= 0 $ is not compatible with  some of the properties of the sequence $( \tilde{w}_n)_{n\ge 0} $ summarized in \eqref{syst1}, which hence leads to the contradiction. First, we notice that \eqref{contr9}  ensures that  
\begin{equation}\label{conv}
 \tilde{w}_n \longrightarrow \tilde{w}\equiv 0 \quad \text{in} \ C([-T,T], L^2(\mathbb{R})) , \quad T>0 .
\end{equation}
We now obtain a  local $ L^2$ estimate on $ u_{n,x} $ that is uniform in $n $ by applying  the  local Kato smoothing effect. Let $R\ge 1 $ and $ \varphi \in C^\infty(\mathbb{R}) $ with $ \varphi\equiv 0 $ on $]-\infty,-2R] $, $0\le \varphi'\le 1 $ on $\mathbb{R}$,  $\varphi '\equiv 0 $ on $]-2R,2R[^c $ , $ \varphi'\equiv 1 $ on $[-R,R] $
 and $ |\varphi^{'''} |\le 4 $ on $\mathbb{R} $. Note in particular that $ 0\le \varphi \le 4 R $ on $\mathbb{R} $. Multiplying \eqref{eqtw} by $\varphi \tilde{w}_n $ and integrating by parts, we get 
\begin{align}
\frac{1}{2}\frac{d}{dt} \int \varphi \tilde{w}_n^2 &= - \frac{3}{2} \int_{\R} \varphi' \partial_x \tilde{w}_n^2+ \frac{1}{2} \int_{\R} \varphi^{'''} \tilde{w}_{n}^2
+ \gamma_n \Bigl(\int_{\R} \varphi '   \tilde{w}_n \H \partial_x \tilde{w}_n+ \int_{\R} \varphi   \partial_x  \tilde{w}_n \H \partial_x \tilde{w}_n  \Bigr)
\nonumber \\
& -\dot{x}_n \int_{\R} \varphi' \tilde{w}_n^2 - (c_n-\dot{x}_n) \int_{\R} \varphi \, Q_{\gamma_n,c_n}'  \tilde{w}_n + \frac{\delta_n}{3} \int_{\R} \varphi '
\tilde{w}_n^3 \ . \label{h1}
\end{align}
We notice that 
$$
\Bigl|\int_{\R} \varphi '   \tilde{w}_n \H \partial_x \tilde{w}_n+ \int_{\R} \varphi   \partial_x  \tilde{w}_n \H \partial_x \tilde{w}_n  \Bigr|
\lesssim R \,  \|\tilde{w}_n\|_{H^1}^2 
$$
and that
\begin{align*}
\Bigl| \frac{1}{2} \int_{\R} \varphi^{'''} \tilde{w}_{n}^2+\dot{x}_n \int_{\R} \varphi' \tilde{w}_n^2 &- (c_n-\dot{x}_n) \int_{\R} \varphi \, Q_{\gamma_n,c_n}'  \tilde{w}_n + \frac{\delta_n}{3} \int_{\R} \varphi '
\tilde{w}_n^3 \Bigr| \\
& \lesssim  \|\tilde{w}_n\|_{L^2}^2+ R   \|\tilde{w}_n\|_{L^2}+\delta_n \|\tilde{w}_n\|_{L^2}^{5/2}  \|\tilde{w}_n\|_{H^1}^{1/2} \ .
\end{align*}
Therefore,  integrating \eqref{h1} on $ ]-1,1[ $ and using \eqref{conv} and that $ \gamma_n\to 0 $, we eventually get 
\begin{equation}\label{convo2}
\|\tilde{w}_{n,x} \|_{L^2(]-1,1[\times ]-R,R[)} \tendsto{n\to \infty} 0 \; .
\end{equation}
Combining this with \eqref{renorm2}, we obtain that for any $\delta\in ]-1,1[ $ and any $ \varepsilon>0 $, there exists $ n_{\varepsilon,\delta} \ge 0 $ such that $\forall n\ge n_{\varepsilon,\delta} $,
$$
\int_{0}^\delta \| \tilde{w}_{n,x}(t) \|_{L^2}^2\, dt < \varepsilon \; .
$$
Therefore, there exists  $ n_\delta\ge 0 $ such that for all $n\ge n_\delta$ there exists $ t_{n,\delta}\in [0,\delta] $ with 
\begin{equation}\label{limit3}
\| \tilde{w}_{n,x} ( t_{n,\delta})\|_{L^2} <1/8 \; .
\end{equation}
Let us now use  the energy of the Benjamin equation. We set 
$$
E_n(u)= \frac{1}{2} \int_{R} u_x^2 - \frac{\gamma}{2} \int_{R} u \H u_x -\frac{\delta_n}{6} \int_{\R} u^3 
$$
and seek an estimate on 
$$
 \frac{d}{dt} E_n(\tilde{w}_n))= \int_{\R} \tilde{w}_{n,x} \partial_t \tilde{w}_{n,x}-\gamma \int_{\R}\partial_t  \tilde{w}_n \H  \tilde{w}_{n,x}
 + \frac{\delta_n}{2} \int_{\R}  \tilde{w}_{n}^2 \partial_t  \tilde{w}_n \ .
 $$
 Setting 
 $$ \Theta_n= -\partial^3_x \tilde{w}_n - \gamma_n \H \partial_x^2 \tilde{w}_n - \frac{\delta_n}{2} \partial_x(\tilde{w}_n^2) \ ,$$
it follows from the energy conservation for the Benjamin equation that 
$$
\int_{\R} \tilde{w}_{n,x} \Theta_{n,x}-\gamma \int_{\R}\Theta_n \H  \tilde{w}_{n,x}
 + \frac{\delta_n}{2} \int_{\R}  \tilde{w}_{n}^2\Theta_n = 0 \ .
$$
It thus remains to get an estimate on 
\begin{equation}\label{to}
\Bigl|\int_{\R} \tilde{w}_{n,x} \Delta_{n,x}-\gamma_n \int_{\R}\Delta_n \H  \tilde{w}_{n,x}
 + \frac{\delta_n}{2} \int_{\R}  \tilde{w}_{n}^2\Delta_n\Bigr| 
\end{equation}
where 
$$
 \Delta_n=  \dot{x}_n  \partial_x \tilde{w}_n- (c_n-\dot{x}_n) Q_{\gamma_n,c_n}'- \partial_x ( \tilde{w}_n  Q_{\gamma_n,c_n}) \; .
 $$
 We evaluate separately the contribution of the three terms in \eqref{to}.
 By integration by parts and H\"older inequality, we have 
 \begin{align*}
 \Bigl|\int_{\R} \tilde{w}_{n,x} \Delta_{n,x}\Bigr| & = \Bigl|-  (c_n-\dot{x}_n)\int_{\R}  Q_{\gamma_n,c_n}^{''}  \tilde{w}_{n,x}
  -\frac{3}{2} \int_{\R} Q_{\gamma_n,c_n}'  \tilde{w}_{n,x}^2+\frac{1}{2} \int_{\R} Q_{\gamma_n,c_n}^{'''} \tilde{w}_{n}^2\Bigr|\\
 &  \lesssim (1+\| \tilde{w}_{n}\|_{H^1}) \| \tilde{w}_{n}\|_{H^1}\lesssim 1 \; ,
 \end{align*}
  \begin{align*}
 \Bigl|\int_{\R}  \Delta_{n}\H \tilde{w}_{n,x}\Bigr| & = \Bigl|-  (c_n-\dot{x}_n)\int_{\R}  Q_{\gamma_n,c_n}^{'} \H  \tilde{w}_{n,x}
  -\int_{\R} (Q_{\gamma_n,c_n} \tilde{w}_{n,x} +Q_{\gamma_n,c_n}' \tilde{w}_{n}) \H \tilde{w}_{n,x} \Bigr|\\
 &  \lesssim (1+\| \tilde{w}_{n}\|_{H^1}) \| \tilde{w}_{n}\|_{H^1}\lesssim 1 \; ,
 \end{align*}
 and 
  \begin{align*}
 \Bigl|\int_{\R} \tilde{w}_{n}^2 \Delta_{n}\Bigr| & = \Bigl|-  (c_n-\dot{x}_n)\int_{\R}  Q_{\gamma_n,c_n}^{'}  \tilde{w}_{n}^2
  -\frac{2}{3} \int_{\R} Q_{\gamma_n,c_n}'  \tilde{w}_{n}^3\Bigr|\\
 &  \lesssim (1+\| \tilde{w}_{n}\|_{H^1}) \| \tilde{w}_{n}\|_{H^1}^2\lesssim 1 \; .
 \end{align*}
Gathering all these estimates, we infer that $\displaystyle  |\frac{d}{dt} E_n(\tilde{w}_n(t))| \lesssim 1 $ and thus in view of \eqref{conv}  for $ t\in [0,1] $, we get 
\begin{align*}
\Bigl| \int_{\R} \tilde{w}_{n,x}^2(t,x) -\tilde{w}_{n,x}^2(0,x) \, dx\Bigr| & \lesssim t + \sup_{t\in \R} \Bigl( |\gamma_n|
\Bigl|  \int_{\R} \tilde{w}_{n} \H \tilde{w}_{n,x}\Bigr|  +\delta_n \Bigl| \int_{\R} \tilde{w}_{n}^3 \Bigr| \Bigr) \\
& \lesssim t + \varepsilon(n) (1+\| \tilde{w}_{n}\|_{H^1}) \lesssim t + \varepsilon(n)\; ,
\end{align*}
with $ \varepsilon(n) \rightarrow 0 $ as $ n\to +\infty$.  In view of  \eqref{limit3} and \eqref{conv},  this ensures that 
$$
\| \tilde{w}_{n}(0)\|_{H^1}^2 \le \frac{1}{8}+C \, \delta + o(1) \; ,
$$
for some $ C>0 $.
Taking $\delta>0 $ small enough, this contradicts the second line in \eqref{syst1} and thus completes the proof of Theorem \ref{NL}. \hfill $\square$
\section{Asymptotic Stability}
Fix $ |\gamma|<\gamma_0 $ where $ \gamma_0 $ is given by Proposition \ref{der}. Let  $u_0\in H^1(\R) $ such that
\begin{equation}\label{init}
\|u_0-Q_{\gamma,1}\|_{H^1} < \eta 
\end{equation}
so that the orbital stability result forces 
\begin{equation}\label{orbit}
\sup_{t\in\R}\inf_{y\in\R}  \|u(t)-Q_{\gamma,1}(\cdot-y)\|_{H^1} < \varepsilon=\varepsilon(\eta) .
\end{equation}
 We set 
\begin{equation}\label{deftalpha}
\alpha=\limsup_{t\to \infty} \|u(t) \|_{L^2(x>t/2)} .
\end{equation}
The conservation of the $ L^2$-norm of the solution, \eqref{orbit}, the decay of $Q_{\gamma,1}$ and Lemma \ref{parameter} (especially \eqref{decomp4}), together force $ |\alpha-\|Q_{\gamma,1}\|_{L^2}| \lesssim \varepsilon $. 
We take $\eta $ small enough in \eqref{init} so that \eqref{tr2} then implies that there exists  a unique $ c_*>0 $ with $ |c_*-1|\lesssim \varepsilon $ such that 
\begin{equation}\label{deftc}
 \|Q_{\gamma,c_*} \|_{L^2}^2= \alpha^2 \; .
\end{equation}
 It then follows from \eqref{smooth} and \eqref{init} that 
\begin{equation}\label{tt}
\|Q_{\gamma,1}-Q_{\gamma,c_*}\|_{H^1} \lesssim  \varepsilon \ .
\end{equation}
Let $ \varepsilon_0 >0 $ be the constant interfering in the hypothesis \eqref{hyp1} of Theorem \ref{NL}. Taking $ \eta >0 $ small enough in \eqref{init},   the modulation results  together with \eqref{tt} ensure that there exists $ x \, :\; \R \to \R $ of class $ C^1 $  such that for all $t\in\R $,
\begin{align}
\|u(t, \cdot+x(t)) -Q_{\gamma,1}\|_{H^1} <\varepsilon_0 \ ,\label{bound1} \\
 \quad \int_{\R} u(t, \cdot+x(t)) Q_{\gamma,c_*}' =0 \ , \label{ortho1}\\
 |\dot{x}(t)-1|\ll 1 \ . \label{bound2} 
\end{align}
Therefore, $ u $ satisfies the hypotheses of the almost $H^1$-monotonocity proposition (Proposition \ref{H1Monot}) and the same arguments as the ones after \eqref{tftf} in the proof of Proposition \ref{propdecay} lead to the uniform decay at the right of the $H^1$-norm of $ u $, i.e. 
\begin{equation}\label{bound3}
\lim_{R\to +\infty}\limsup_{t\to +\infty} \|u(t, \cdot+x(t))\|_{H^1(x>R)} =0\; .
\end{equation}
We will now state  a  proposition which ensures that from any sequence of values of $ u $ along an increasing sequence of time going to infinity we can extract an asymptotic object that satisfies the hypotheses of the nonlinear Liouville theorem. This proposition together with the  nonlinear Liouville theorem will lead to the asymptotic stability result. However, to establish this proposition we  need to use the continuity of the flow-map associated with the Benjamin equation with respect to the $ H^1$-weak topology. This is the aim of the following lemma that is proven in the appendix.
\begin{lemma}\label{weakcontinuity}
Let $ u_0\in H^1(\R) $ and $ \{u_{0,n}\}_{n\ge 0} \subset H^1(\R) $ such that $ u_{0,n} \rightharpoonup u_0 $ in $ H^1(\R) $. Then, denoting by $ u\in C(\R;H^1(\R))  $ and $ u_n \in C(\R;H^1(\R))$, $ n\ge 0 $, the solutions to \eqref{MainEq} emanating respectively from  $ u_0 $ and $ u_{0,n} $, $n\ge 0 $, for any $ T>0 $ it holds\footnote{This convergence means that for any $ \phi \in H^1(\R) $, $(u_n,\phi)_{H^1} \to (u,\phi)_{H^1} $ in $C([-T,T])$.} 
\begin{equation}\label{cvfaible}
u_n \rightharpoonup u\in C_w([-T,T];H^1(\R)) \; .
\end{equation}

\end{lemma}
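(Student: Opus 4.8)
The plan is to combine the \emph{a priori} bounds provided by the conservation laws with a weak-compactness argument, and then to identify the limit by the uniqueness of the $H^1$-solution to \eqref{MainEq}. \textbf{Step 1 (uniform bounds).} Since $u_{0,n}\rightharpoonup u_0$ in $H^1(\R)$, the sequence $(u_{0,n})$ is bounded in $H^1(\R)$, hence so are $(M(u_{0,n}))$ and $(E(u_{0,n}))$. The conservation of the mass gives $\|u_n(t)\|_{L^2}=\|u_{0,n}\|_{L^2}\lesssim 1$ for all $t\in\R$. The conservation of the energy, together with the interpolation inequality $\|D_x^{1/2}v\|_{L^2}^2\le\|v\|_{L^2}\|v_x\|_{L^2}$ and the Gagliardo--Nirenberg inequality $\|v\|_{L^3}^3\lesssim\|v\|_{L^2}^{5/2}\|v_x\|_{L^2}^{1/2}$, then yields through Young's inequality a bound on $\|u_{n,x}(t)\|_{L^2}$ depending only on $\gamma$ and on $\sup_n(M(u_{0,n})+|E(u_{0,n})|)$; the non-coercivity of the quadratic part of the energy for $\gamma<0$ is harmless here because $\gamma$ is fixed and these contributions are absorbed at the cost of a $\gamma$-dependent constant. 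Hence $\sup_n\sup_{t\in\R}\|u_n(t)\|_{H^1}=:C_0<\infty$. Reading \eqref{MainEq} as $\partial_t u_n=-\partial_x^3u_n-\gamma\H\partial_x^2u_n-\tfrac12\partial_x(u_n^2)$ and using this bound, $(\partial_t u_n)$ is bounded in $L^\infty(\R;H^{-2}(\R))$; in particular the maps $t\mapsto u_n(t)$ are uniformly Lipschitz from $\R$ into $H^{-2}(\R)$.

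\textbf{Step 2 (compactness).} Fix $T>0$. Combining the boundedness of $(u_n)$ in $C([-T,T];H^1(\R))$ with the equi-Lipschitz bound into $H^{-2}(\R)$, the compact embedding $H^1(K)\hookrightarrow L^2(K)$ for every compact $K\subset\R$, and an Arzel\`a--Ascoli / Aubin--Lions argument, I would extract a subsequence (not relabelled) and a limit $v\in C_w([-T,T];H^1(\R))\cap L^\infty([-T,T];H^1(\R))$ such that
\[
u_n\longrightarrow v\ \text{ in }\ C([-T,T];L^2_{\mathrm{loc}}(\R))\qquad\text{and}\qquad u_n\rightharpoonup v\ \text{ in }\ C_w([-T,T];H^1(\R)).
\]
The uniform-in-time weak convergence is obtained by testing $t\mapsto\langle u_n(t),\psi\rangle$ against a countable dense family of $\psi$'s in $H^2(\R)$ (for which the equi-Lipschitz bound into $H^{-2}$ supplies the needed equicontinuity), extracting by Arzel\`a--Ascoli, and then reaching an arbitrary $\phi\in H^1(\R)$ by density using $\sup_n\sup_t\|u_n(t)\|_{H^1}\le C_0$. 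Since $u_n(0)=u_{0,n}\rightharpoonup u_0$ in $H^1(\R)$, one also gets $v(0)=u_0$.

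\textbf{Step 3 (passing to the limit and uniqueness).} Using the strong convergence in $C([-T,T];L^2_{\mathrm{loc}})$ together with the uniform bound $\sup_n\|u_n\|_{L^\infty_{t,x}}\lesssim C_0$ (Sobolev embedding), one has $u_n^2\to v^2$ in $C([-T,T];L^1_{\mathrm{loc}}(\R))$, hence $\partial_x(u_n^2)\to\partial_x(v^2)$ in $\mathcal D'(]-T,T[\times\R)$, while the linear terms pass to the limit by the weak convergence. Therefore $v$ solves \eqref{MainEq} on $]-T,T[$, belongs to $L^\infty([-T,T];H^1(\R))$ and satisfies $v(0)=u_0$. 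By the global well-posedness of \eqref{MainEq} in $H^1(\R)$ and the uniqueness of its solutions in this class, $v\equiv u$ on $[-T,T]$. Finally, since the limit $u$ is independent of the extracted subsequence, the whole sequence converges, i.e. $u_n\rightharpoonup u$ in $C_w([-T,T];H^1(\R))$, which is \eqref{cvfaible}.

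I expect the main obstacle to be Step 2: upgrading the (easy) pointwise-in-time weak-$H^1$ convergence to the uniform-in-time statement $C_w([-T,T];H^1)$ demanded by the lemma. This is precisely where the equi-Lipschitz bound of $t\mapsto u_n(t)$ into the weaker space $H^{-2}(\R)$ from Step 1 enters, in combination with the density argument needed to treat general test functions in $H^1(\R)$.
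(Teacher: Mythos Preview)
Your proof is correct and follows essentially the same approach as the paper's: uniform $H^1$-bounds plus the $H^{-2}$ time-derivative bound from the equation, Aubin--Lions/Arzel\`a--Ascoli compactness to extract a subsequential limit in $C([-T,T];L^2_{\mathrm{loc}})$ and $C_w([-T,T];H^1)$, passage to the limit in the equation, identification via the $H^1$ uniqueness theory, and a density argument to reach general test functions. The only cosmetic differences are that the paper cites the well-posedness theory directly for the uniform $H^1$ bound (rather than working through the conservation laws) and tests first against $\phi\in C^\infty_c(\R)$ instead of a dense subset of $H^2(\R)$.
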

\begin{proposition}\label{propro}
For any  sequence $\{t_{n}\} $ with $ t_n \nearrow +\infty $, there exists  a subsequence $ \{t_{n_k}\} $ of $\{t_n\} $ and $ \tilde{u}_0\in H^1(\R) $ such that for any $ T>0 $, 
\begin{equation} \label{e2}
[t\mapsto u(t_{n_k}+t, \cdot-x(t_{n_k}+t))] \rightharpoonup [t\mapsto\tilde{u}(t,\cdot+\tilde{x}(t))]   \quad \text{in} \; C_w([-T,T];H^1(\R)) 
\end{equation}
and for any $ R>0 $ and any $ t\in\R $,
\begin{equation}\label{bound10}
\lim_{k\to \infty} \|u(t_{n_k} +t, \cdot + x(t_{n_k}+t)) - \tilde{u}(t,\cdot +\tilde{x}(t))\|_{L^2([-R,+\infty[)} =0 \;,
\end{equation}
where $ \tilde{u} \in C(\R;H^1(\R))$ is the solution to \eqref{MainEq} emanating from $\tilde{u}_0 $. Moreover, $ \tilde{u} $ satisfies the hypotheses of the nonlinear Liouville theorem (Theorem \ref{NL}).
\end{proposition}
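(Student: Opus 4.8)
The plan is to realize $\tilde u$ as the Benjamin evolution of a weak $H^1$-limit of the recentred family $\{u(t_n,\cdot+x(t_n))\}_n$, to transport that weak convergence in time through Lemma~\ref{weakcontinuity}, and then to verify the two hypotheses \eqref{hyp1}--\eqref{hyp2} of Theorem~\ref{NL} for $\tilde u$ by combining weak lower semicontinuity, the almost monotonicity of Propositions~\ref{L2Monot} and \ref{H1Monot} (through \eqref{bound3}), conservation of mass, and the a priori bound $\|\tilde u_0\|_{L^2}^2\le\alpha^2$.

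First I would extract. By \eqref{bound1} the family $\{u(t_n,\cdot+x(t_n))\}$ is bounded in $H^1(\R)$, so along a subsequence $(t_{n_k})$ it converges weakly in $H^1(\R)$ to some $\tilde u_0$; let $\tilde u\in C(\R;H^1(\R))$ be the solution of \eqref{MainEq} with datum $\tilde u_0$. Lemma~\ref{weakcontinuity} gives $u(t_{n_k}+t,\cdot+x(t_{n_k}))\rightharpoonup\tilde u(t)$ in $C_w([-T,T];H^1(\R))$ for every $T>0$. To absorb the space translations, set $y_k(t):=x(t_{n_k}+t)-x(t_{n_k})$; by \eqref{bound2} the $y_k$ are equi-Lipschitz on compact sets with $y_k(0)=0$, so after a diagonal extraction $y_k\to\tilde x$ in $C_{\mathrm{loc}}(\R)$ with $|\dot{\tilde x}-1|\ll1$, hence $\dot{\tilde x}\ge 5/6$. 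Since $x(t_{n_k}+t)=x(t_{n_k})+y_k(t)$ and $(v,a)\mapsto v(\cdot+a)$ is continuous from (a bounded ball of $H^1$, weak)$\,\times\,\R$ into (that ball, weak), composing the two convergences yields \eqref{e2}. Weak lower semicontinuity of $\|\cdot\|_{H^1}$ together with \eqref{bound1} and \eqref{tt} then gives $\sup_{t\in\R}\|\tilde u(t,\cdot+\tilde x(t))-Q_{\gamma,1}\|_{H^1}\lesssim\varepsilon<\varepsilon_0$, i.e.\ \eqref{hyp1} for $\tilde u$ with translation parameter $\tilde x$.

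Next I would establish \eqref{bound10} and the right half of \eqref{hyp2}. Rellich's theorem upgrades the weak $H^1$-convergence in \eqref{e2} to strong convergence in $C([-T,T];L^2([-R,R]))$; to reach $[-R,+\infty[$ one uses the uniform decay at the right \eqref{bound3}: given $\delta>0$, pick $R'\ge R$ with $\|u(\tau,\cdot+x(\tau))\|_{L^2(x>R')}^2<\delta$ for large $\tau$, so that $\sup_{t\in[-T,T]}\|u(t_{n_k}+t,\cdot+x(t_{n_k}+t))\|_{L^2(x>R')}^2<\delta$ for $k$ large, whence $\sup_{t\in[-T,T]}\|\tilde u(t,\cdot+\tilde x(t))\|_{L^2(x>R')}^2\le\delta$ by weak lsc; together with the strong convergence on $[-R,R']$ this is \eqref{bound10}. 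The same weak-lsc argument applied to $\|\cdot\|_{H^1(x>R)}$ gives $\sup_{t\in\R}\|\tilde u(t,\cdot+\tilde x(t))\|_{H^1(x>R)}\to0$ as $R\to\infty$. For the left side one uses the reversed function $\check u(t,x):=\tilde u(-t,-x)$, which by the invariance of \eqref{MainEq} under $(t,x)\mapsto(-t,-x)$ is again a solution of \eqref{MainEq}, centred at $-\tilde x(-t)$ and $\varepsilon_0$-close to the \emph{even} profile $Q_{\gamma,1}$: since $\tilde u_0\in H^1$, running for $\check u$ the monotonicity argument that proved \eqref{bound3} (the scheme following \eqref{tftf}, with the $H^1$-datum playing the role of the localization input) gives $\|\check u(s,\cdot+\check x(s))\|_{H^1(x>R)}\to0$ as $s\to+\infty$, that is $\|\tilde u(\sigma,\cdot+\tilde x(\sigma))\|_{H^1(x<-R)}\to0$ as $\sigma\to-\infty$.

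The remaining point, which I expect to be the main obstacle, is to promote this to the full estimate \eqref{hyp2} for $\tilde u$: smallness of the left $H^1$-tail \emph{uniformly in $t\in\R$} (in particular as $t\to+\infty$) \emph{and for every $\delta>0$}, rather than only the $O(\varepsilon)$ that orbital closeness of $\tilde u$ to the soliton provides for free. The available ingredients are: (a) because recentring at $x(t_{n_k})\simeq t_{n_k}$ drives any part of $u(t_{n_k})$ lying to the left of $x(t_{n_k})-R$ into $\{x>t_{n_k}/2\}$, the strong $L^2_{\mathrm{loc}}$-convergence and the definition \eqref{deftalpha} of $\alpha$ force $\|\tilde u_0\|_{L^2}^2\le\alpha^2=\|Q_{\gamma,c_*}\|_{L^2}^2$, hence $\|\tilde u(t)\|_{L^2}^2\equiv\|\tilde u_0\|_{L^2}^2\le\alpha^2$ by mass conservation, so nearly all the mass of $\tilde u(t)$ concentrates around $\tilde x(t)$; (b) the uniform right-tail decay of $\tilde u$ already obtained, together with its analogue for $\check u$; (c) the almost monotonicity of $4M+E$ (Proposition~\ref{H1Monot}) and the $R^{-1/4}$-scheme of Proposition~\ref{propdecay}, in which the uniform right-tail decay of $\tilde u$ and of $\check u$ can replace the localization hypothesis \eqref{dse} when letting $t\to\pm\infty$. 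The delicate step is to combine (a)--(c) — exploiting both time directions of the monotonicity through the reflection symmetry together with the sharp mass cap in (a) — so as to kill the residual $O(\varepsilon)$ and control the left $H^1$-tail of $\tilde u$ for all $t$ and all $\delta>0$. Once \eqref{hyp1} and \eqref{hyp2} are verified for $\tilde u$, the solution $\tilde u$ satisfies the hypotheses of Theorem~\ref{NL}, which is exactly the assertion of the proposition.
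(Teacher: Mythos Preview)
Your extraction of $\tilde u_0$, the Arzel\`a--Ascoli step for the centers, the appeal to Lemma~\ref{weakcontinuity} for \eqref{e2}, the verification of \eqref{hyp1} by weak lower semicontinuity, and the derivation of \eqref{bound10} and of the uniform-in-$t$ right $H^1$-decay of $\tilde u$ from \eqref{bound3} are all correct and coincide with the paper's argument.

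The genuine gap is the one you flag: the uniform-in-$t$ \emph{left} $H^1$-decay of $\tilde u$. Your reflection idea yields left decay of $\tilde u$ only as $t\to -\infty$ (right decay of $\check u$ as $s\to+\infty$), and it cannot be pushed forward: the almost monotonicity (Lemma~\ref{LemdecayH}) says $t\mapsto H^{R,r}(\tilde u(t,\cdot+\tilde x(t)))$ is almost \emph{non-increasing}, equivalently the left-localized part $H^{R,l}$ is almost non-decreasing in forward time, so knowing it is small at $-\infty$ gives nothing for $t\to+\infty$. Your ingredient (a) (the mass cap $\|\tilde u_0\|_{L^2}\le\alpha$) is not used in the paper's proof of this proposition at all---it enters only later, to pin down $c=c_*$---and cannot by itself rule out an $O(\varepsilon)$ left $H^1$-tail.

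The paper closes the gap by a different mechanism: a contradiction argument that transfers information back to the \emph{original} solution $u$ through a local Kato smoothing estimate. If the left tail of $\tilde u$ exceeds $\beta_0$ at some $t_R$, then by conservation $H^{R,r}(\tilde u(t_R))\le\theta_0 M(\tilde u)+E(\tilde u)-2^{-3}\beta_0$, and Lemma~\ref{LemdecayH} keeps it below $\theta_0 M+E-2^{-4}\beta_0$ for all $t\ge t_R$; on the other hand one chooses $R_1$ large so that $H^{R_1,r}(\tilde u(\tau))\ge\theta_0 M+E-2^{-8}\beta_0$ on $[0,1]$. The missing idea is then to run a localized virial computation on the difference $w_k=u(t_{n_k}+t,\cdot+x(t_{n_k}+t))-\tilde u(t,\cdot+\tilde x(t))$ (equation \eqref{eqw}) to obtain \eqref{bound7}--\eqref{bound9}: on every window $[T,T+1]$ there are times $\tau_{n_k}$ at which $w_k\to0$ strongly in $H^1([-2R_1,+\infty[)$, hence $H^{R_1,r}(u(t_{n_k}+\tau_{n_k}))$ and $H^{R_1,r}(\tilde u(\tau_{n_k}))$ agree in the limit. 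Taking $T=0$ and $T=t_{R_1}+1$ and feeding the resulting values into Lemma~\ref{LemdecayH} applied to $u$ (not $\tilde u$) produces two incompatible bounds on $\limsup_{t\to\infty}H^{R_1,r}(u(t,\cdot+x(t)))$. Neither the reflection symmetry nor the mass cap plays any role; the decisive inputs are the Kato smoothing for $w_k$ and the almost monotonicity of $H^{R,r}$ for both $u$ and $\tilde u$.
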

\begin{proof}
Let $ t_n \nearrow +\infty $. In view of \eqref{bound1}, there exists a subsequence $ t_{n_k} \nearrow +\infty $  and $ \tilde{u}_0 \in H^1(\R) $ such that 
\begin{equation} \label{e1}
u(t_{n_k}, \cdot+x(t_{n_k})) \rightharpoonup \tilde{u}_0  \quad \text{in} \; H^1(\R) 
\end{equation}
Moreover, in view of \eqref{bound2}, $ \{x(t_n+\cdot)-x(t_n)\} $ is uniformly equi-continuous and thus Arzela-Ascoli Theorem ensures that  there exists $ \tilde{x}\in C(\R) $ such that for all $ T>0 $, we can ask that
   \begin{equation}\label{cvx}
   x(t_{n_k}+\cdot)-x(t_{n_k}) \tendsto{t\to+\infty} \tilde{x} \mbox{ in } C([-T,T]) \; .
   \end{equation}
Let $\tilde{u} \in C(\R;H^1(\R)) $ be the solution to \eqref{MainEq} emanating from $ \tilde{u}_0 $. \eqref{e2} then follows from  \eqref{e1}, \eqref{cvx} and  Lemma \ref{weakcontinuity}. Moreover, by the local compactness of $ H^1(\R) $ in $ L^2(\R) $, for all $t\in\R $, we can also ask that 
\begin{equation} \label{e3}
u(t_{n_k}+t, \cdot-x(t_{n_k}+t)) \rightarrow \tilde{u}(t,\cdot+\tilde{x}(t))   \quad \text{in} \;L^2_{loc}(\R) \; . 
\end{equation}
Note that \eqref{e2} together with \eqref{bound1} -\eqref{ortho1}  ensure that  $ \tilde{u} $ satisfies 
\begin{equation} \label{bound11}
\sup_{t\in\R} \|\tilde{u}(t, \cdot+\tilde{x}(t))-Q_{\gamma,1}\|_{H^1} \le \varepsilon_0 
\quad \text{and}\quad  \int_{\R} \tilde{u}(t, \cdot+\tilde{x}(t)) Q_{\gamma,c_*}' =0
\end{equation}
 that is the hypothesis \eqref{hyp1} of Theorem \ref{NL}. In particular from the modulation result, by uniqueness, we deduce that $ \tilde{x} $ is of class $ C^1 $ and satisfies 
\begin{equation}\label{mod2}
\sup_{t\in\R} |1-\dot{\tilde{x}}(t)| \ll 1 \; .
\end{equation}
Moreover  \eqref{e2} together 
with \eqref{bound3} imply that  
\begin{equation}\label{bound4}
\lim_{R\to +\infty} \sup_{t\in\R} \|\tilde{u} (t, \cdot+\tilde{x}(t))\|_{H^1(x>R)} =0  \; .
\end{equation}
This proves the $ H^1 $-uniform decay at the right of the asymptotic object $ \tilde{u} $.

Now, we want to prove that the convergence in \eqref{e3} is actually uniform on compact intervals. For this, we first notice that since by \eqref{bound1}, the trajectory of $u $ is bounded in $ H^1(\R) $, it follows from \eqref{MainEq} that $ \sup_{t\in\R}  \|\partial_t u (t)\|_{H^{-2}} <\infty $. Therefore, $ t\mapsto  u(t) $ is uniformly continuous with values in $ H^{-2}(\R)$ and thus by interpolation with the $ H^1$-bound, uniformly continuous with values in $ L^2(\R) $ too. This ensures that the sequence of functions $ (f_k)_{k\ge 0}$ defined by  $ f_k\, :\, t\mapsto u(t_{n_k}+t, \cdot-x(t_{n_k}+t))$ is equi-continuous with values in $ L^2(\R) $ and thus the convergence in \eqref{e3} is actually uniform on compact interval, i.e. 
\begin{equation} \label{e33}
[t\mapsto u(t_{n_k}+t, \cdot-x(t_{n_k}+t))] \rightarrow [t\mapsto \tilde{u}(t,\cdot+\tilde{x}(t))]   \quad \text{in} \;C([-T,T];L^2_{loc}(\R)) \; . 
\end{equation}
 This last inequality combined with \eqref{bound3}  and \eqref{bound4} ensure that for any $ T>0 $ and $ R>0 $, 
 \begin{equation}\label{bound6}
\lim_{k\to +\infty}  \sup_{t\in [-T,T]} \|u(t_{n_k}+t , \cdot + x(t_{n_k}+t))-\tilde{u}(t, \cdot+\tilde{x}(t))\|_{L^2([-R,+\infty[)} =0  \; .
 \end{equation}
We move now to prove the $ H^1$-uniform decay at the left of the asymptotic object $ \tilde{u}$.  For this, we need to define the following functionals :  For $ \theta_0\ge 4 $ as in \eqref{nono1}, we define $H^R_r $ and $H^R_{l} $ on $ H^1(\R) $
\begin{equation}\label{defHR}
H^{R,r}(u)=\int_{\R} (\frac{1}{2} u_x^2 -\frac{\gamma}{2} u \H u_x -\frac{1}{6} u^3+ \theta_0 u^2) \chi\Bigl(\frac{\cdot+2R}{R^{3/4}}\Bigr)
\end{equation}
and 
$$
H^{R,l}(u)=\int_{\R} (\frac{1}{2} u_x^2 -\frac{\gamma}{2} u \H u_x -\frac{1}{6} u^3+ \theta_0 u^2) \Bigl(1-\chi\Bigl(\frac{\cdot+2R}{R^{3/4}}\Bigr)\Bigr)
$$
Note that, on  account of \eqref{estgg} it holds 
$$
\Bigl| \int_{\R} u \H u_x \chi\Bigl(\frac{\cdot+2R}{R^{3/4}}\Bigr)\Bigr)\Bigr|
\le \int_{\R}  (\frac{1}{4} u_x^2+2 u^2) \Bigl(1-\chi\Bigl(\frac{\cdot+2R}{R^{3/4}}\Bigr)\Bigr) + c R^{-3/4} 
$$
and thus as in \eqref{nono1}  it holds 
\begin{align}
H^{R,l}(u)& \ge\frac{1}{4}  \int_{\R} (u_x^2  +u^2) \Bigl(1-\chi\Bigl(\frac{\cdot+2R}{R^{3/4}}\Bigr)\Bigr)-c \gamma R^{-3/4} \nonumber \\ 
&\ge \frac{1}{4} \|u\|_{H^1(-\infty, -2R)}^2-C \gamma R^{-3/4}  \; . \label{de1}
\end{align}
We will make use of the following lemma, proven in the appendix,  that ensures that $ t\mapsto H^{R,r} (u(t, \cdot + x(t)))$ and $ t\mapsto  H^{R,r} (\tilde{u}(t, \cdot + \tilde{x}(t))) $ are almost non increasing functions. This follows from the almost monotonicity propositions (Propositions \ref{L2Monot} and \ref{H1Monot}).  
\begin{lemma}\label{LemdecayH}
Let $u\in C(\R;H^1(\R)) $ be a solution to \eqref{MainEq}. 
For $|\gamma|\le 1/2$ and under the same assumptions as in Proposition \ref{L2Monot}, there exist $C>0$ only depending on $ M(u_0)$, $E(u_0) $ and $ R_0 $  such that for all $R>1$ and $(t,t_0)\in \R^2 $ with $ t\ge  t_0$ such that 
\begin{equation}\label{decH}
H^{R,r} (u(t, \cdot + x(t)))\le H^{R,r} (u(t_0, \cdot + x(t_0))) +  C R^{-1/4} \; .
\end{equation}
\end{lemma}
We proceed by contradiction assuming that the $ H^1$-uniform decay does not hold. Then there exists $ \beta_0 >0 $ such that for all $ R>0 $ there exists $ t_R\in \R $ with 
$$
\|\tilde{u}(t_R, \cdot +\tilde{x}(t_R))\|_{H^1(-\infty,-2R)} >\beta_0 \; .
$$
In particular, taking  $ R>0 $ such that $ R^{-3/4} \ll \beta_0 $ , \eqref{de1} leads to 
$$
H^{R,l}(\tilde{u}(t_{R}, \cdot +\tilde{x}(t_{R})))> 2^{-3} \beta_0  \; .
$$
Since  
$$
 H^{R,l}(\tilde{u}(t, \cdot +\tilde{x}(t)) +H^{R,l}(\tilde{u}(t, \cdot +\tilde{x}(t))= \theta_0 M(\tilde{u}) +E(\tilde{u}) = \theta_0 M(\tilde{u}(0))+E(\tilde{u}(0)) ,
 $$
 it follows that 
 $$
 H^{R,r}(\tilde{u}(t_{R}, \cdot +\tilde{x}(t_{R}))) \le   \theta_0 M(\tilde{u}(0)) +E(\tilde{u}(0))-2^{-3} \beta_0 \; .
 $$
 Therefore, Lemma \ref{LemdecayH} ensures that  $\forall  t\ge t_R$ and $ R\ge 1 $ with $ R^{-1/4} \ll \beta_0 $, 
 \begin{equation} \label{limsup1}
  H^{R,r}(\tilde{u}(t, \cdot +\tilde{x}(t)))\le  \theta_0 M(\tilde{u}(0)) +E(\tilde{u}(0))-2^{-4} \beta_0 \; .
 \end{equation}
 On the other hand,  since $ \tilde{u} \in C(\R;H^1(\R)) $ there exists $ R_1 >0$ with $ R_1^{-1/4} \ll \beta_0 $ such that 
 \begin{equation} \label{limsup2}
\inf_{\tau\in [0,1]} H^{R_1,r} (\tilde{u}(\tau, \cdot +\tilde{x}(\tau))) \ge  \theta_0 M( \tilde{u})+E( \tilde{u}) - 2^{-8} \beta_0 \; .
 \end{equation}
Here we may notice that, taking $R=R_1 $ in \eqref{limsup1}, this forces $ t_{R_1}>0 $. However, to exclude the case $ t_{R_1} >0 $ we will apply an argument that works whatever the  sign of $ t_{R_1} $ is. As in the previous section, this argument makes use of the local  Kato smoothing effect in order to prove a strong convergence to zero of 
$$
 w_k(t,\cdot)= u(t_{n_k} +t, \cdot + x(t_{n_k}+t)) - \tilde{u}(t,\cdot +\tilde{x}(t))
 $$  in $ H^1(-R,+\infty )$ along  some sequences of times.
 It is direct to check that $ w_k $ satisfies the following equation 
\begin{equation} \label{eqw}
 \partial_t w_k + \ \partial_x^3 w_k  +  \gamma \, \H  \partial_x^2 w_k + \frac{1}{2} \partial_x(z_k w_k)  = g_k
 \end{equation}
where 
$$
z_k(t,x)=u(t_{n_k} +t, \cdot + x(t_{n_k}+t)) + \tilde{u}(t,\cdot +\tilde{x}(t))
$$
and 
$$
g_k(t,x)= \dot{x}(t_{n_k}-t) u_x(t_{n_k} +t, \cdot + x(t_{n_k}+t))-\dot{\tilde{x}}(t)    \tilde{u}_x(t,\cdot +\tilde{x}(t))
$$
As in the previous section, we now  establish a  local $ L^2$ estimate on $\partial_x  w_{k}$. Let  $ \varphi \in C^\infty(\mathbb{R}) $ with $ \varphi\equiv 0 $ on $]-\infty,-2] $, $0\le \varphi'\le 1 $ on $\mathbb{R}$,  $\varphi '\equiv 0 $ on $]-2,2[^c $ , $ \varphi'\equiv 1 $ on $[-1,1] $
 and $ \|\varphi'\|_{W^{2,\infty}} \le 4 $ on $\mathbb{R} $.
 
  We set $ \varphi_R= R \,\varphi(\cdot/R) $.
 Note in particular that $ 0\le \varphi_R\le 4 R $ on $\mathbb{R} $ and $ \varphi_R'\equiv 1 $ on $[-R,R] $ and that the same type of considerations as the ones to obtain \eqref{deriv2} for $ 1\le k \le 3$  lead to 
 \begin{equation}\label{derder1}
 \|D_x^{k} \varphi_R\|_{L^\infty_x} \lesssim R^{1-k} \; .
 \end{equation}
  Multiplying \eqref{eqw} by $\varphi_R w_k $ and integrating by parts, we get 
\begin{align}
\frac{1}{2}\frac{d}{dt} \int \varphi_R w_k^2 &= - \frac{3}{2} \int_{\R} \varphi_R' (\partial_x w_k)^2+ \frac{1}{2} \int_{\R} \varphi_R^{'''}w_k^2
+ \gamma \Bigl(\int_{\R} \varphi_R '   w_k \H \partial_x w_k+ \int_{\R} \varphi_R   \partial_x  w_k \H \partial_x w_k \Bigr)
\nonumber \\
& -\int_{\R} (z_{k,x} \varphi_R+\frac{1}{2} z \varphi_R') w^2_k +\int_{\R}  \varphi_R g w_k  \; . \label{hh1}
\end{align}
According to \eqref{bound6}, we know that for all $t\in\R $, $\displaystyle  \lim_{k\to \infty}\|w_k(t)\|_{L^2(\text{supp} \,\varphi_R)}=0 $.
We thus infer that 
$$
\lim_{k\to +\infty} \Bigl( \int_{\R} (|\varphi_R|+|\varphi_R^{'''}|) w_k^2+|\int_{\R} \varphi_R '   w_k \H \partial_x w_k|
+|\int_{\R} (z_{k,x} \varphi_R+\frac{1}{2} z \varphi_R') w^2_k|+|\int_{\R}  \varphi_R g w_k| \Bigr) =0 \;.
$$
where we used that by Sobolev imbedding and interpolation 
$$
|\int_{\R} (z_{k,x} \varphi_R w^2_k|\lesssim R  \|z_k\|_{H^1} \|w_k\|_{L^4_{(\text{supp} \varphi_R)}}^2 
\lesssim R  \|z_k\|_{H^1} \|w_k\|_{H^1}^{1/2}\|w_k\|_{L^2_{(\text{supp} \,\varphi_R)}}^{3/2}\tendsto{k\to +\infty} 0 \; .
$$
Therefore, for $ T\in \R $ to be chosen later,  integrating \eqref{hh1} on $ [T,T+1] $ and making use of the Lebesgue Dominated Convergence Theorem we get 
$$
\int_{T}^{T+1} \int_{-R}^R (\partial_x w_k)^2 \le  \epsilon_R(k) + \Bigl| \int_{\R} \varphi_R   \partial_x  w_k \H \partial_x w_k\Bigr| 
$$
where $ \epsilon_R(k)\to 0 $ as $ k\to 0 $. 
We do not know if the last term in the above estimate tends to $ 0 $ with $ k$. However, making use of   \eqref{tgtg}, \eqref{besov1}, and \eqref{derder1} we obtain 
$$
\Bigl| \int_{\R} \varphi_R   \partial_x  w_k \H \partial_x w_k\Bigr| =
\frac{1}{2} \Bigl| \int_{\R}     w_k \partial_x( [\H,\varphi_R] \partial_x w_k)\Bigr|\lesssim 
R^{-3/4} 
$$
which  ensures that 
\begin{equation}\label{bound7}
\lim_{R\to +\infty} \limsup_{k\to +\infty} \int_{T}^{T+1} \int_{-R}^R (\partial_x w_k)^2 =0  \; .
\end{equation}
In particular, for any $ R>0 $,   by a diagonal process we can construct a   sequence $( \tau_{n_k})\subset ]T,T+1[ $ such that 
\begin{equation}\label{bound8}
\lim_{k\to\infty}  \int_{-R}^R (\partial_x w_k(\tau_{n_k}))^2=0 \; .
\end{equation}
Combining this last convergence result with \eqref{bound3}  and \eqref{bound4}, we infer that 
$$
\lim_{k\to \infty}   \int_{-R}^{+\infty}  (\partial_x w_k(\tau_{n_k}))^2=0 \; 
$$
which together with \eqref{bound6}  lead to 
\begin{equation}\label{bound9}
\lim_{k\to \infty}\|u(t_{n_k}+\tau_{n_k}, \cdot + x(t_{n_k}+\tau_{n_k})) - \tilde{u}(\tau_{n_k},\cdot +\tilde{x}(\tau_{n_k}))\|_{H^1([-R,+\infty[)} =0 \; .
\end{equation}

Now, taking $ T=0 $ in \eqref{bound7}, \eqref{bound9} with $ R=2 R_1$ together with  \eqref{limsup2}  ensure that 
$$
\limsup_{t\to \infty} H^{R_1,r} (u(t, \cdot +x(t))) \ge  \theta_0 M( \tilde{u})+E( \tilde{u}) - 2^{-8} \beta_0 \; .
$$
On the other hand, taking $T=T_{R_1}+1$ in \eqref{bound7}, \eqref{bound9} with $ R=2 R_1$ together with  \eqref{limsup1}  ensure that 
$$
\limsup_{t\to \infty} H^{R_1,r} (u(t, \cdot +x(t))) \le  \theta_0 M( \tilde{u})+E( \tilde{u}) - 2^{-4} \beta_0 
$$
which leads the contradiction and hence proves the $H^1$-uniform decay at the left of $ \{\tilde{u}(t,\cdot+\tilde{x}(t)), \; t\in\R \} $.
\end{proof}
Let us now conclude the proof of the asymptotic stability. Let $\{t_n\} \subset \R_+ $ such that $ t_n\nearrow +\infty $. The preceding proposition combined with the nonlinear Liouville theorem ensure that there exist a subsequence $\{t_{n_k}\} $ of $\{t_n\} $, $ c>0 $ and $y\in\R $ with $ |c-1|\ll 1 $ such that for any $ R>0 $\begin{equation}\label{bound100}
\lim_{k\to \infty} \|u(t_{n_k}  \cdot + x(t_{n_k})) - Q_{\gamma,c}(\cdot +y)\|_{L^2(]-R,+\infty[)} =0 \;,
\end{equation}
We claim that \eqref{bound100} forces $ c=c_* $. Indeed , first combining \eqref{bound100} with  \eqref{deftalpha} and  \eqref{bound2}, we infer  that $ \|Q_{\gamma,c}\|_{L^2}\le \alpha $. Second, assuming that $ \|Q_{\gamma,c}\|_{L^2}< \alpha $, \eqref{bound100} would force, for any $ R>0 $,
$$
\limsup_{k\to \infty} \|u(t_{n_k}  \cdot + x(t_{n_k}))\|_{L^2(]-R,+\infty[)} <\alpha ;
$$
but then taking $ \vartheta=3/8<1/2 $ in  \eqref{defI2}, the almost monotonicity of $ t\mapsto I^{-2R}_{t_{n_k}}(t)$  would imply 
$$
\limsup_{t\to +\infty} \|u(t, \cdot + x(t)\|_{L^2(x> t/2)} <\alpha
$$
that contradicts \eqref{deftalpha}. Therefore, $ \|Q_{\gamma,c}\|_{L^2}= \alpha $ that forces $c=c_* $ by the uniqueness of such $ c$ in the vicinity of $ 1$. Now combining \eqref{tt}, \eqref{bound1} and \eqref{bound10}, we infer that $\|Q_{\gamma,c^*}-Q_{\gamma,c^*}(\cdot+y)\|_{H^1} \ll 1 $ which forces $ |y| \ll 1 $.  But then \eqref{ortho1} together with the uniqueness given by the Implicit Function Theorem in the proof of the modulation lemma (Lemma \ref{parameter}) directly force $ y=0 $. This proves that  the only possible value for $ \tilde{u}_0  $ in Proposition \ref{propro} is $ \tilde{u}_0=Q_{\gamma,c_*}$. This ensures that actually 
\begin{equation}\label{kj11}
u(t, \cdot + x(t)) \rightharpoonup Q_{\gamma,c_*} \quad \text{in} \; H^1(\R) 
\end{equation}
 and for any $ R>0 $, 
\begin{equation}\label{kj1}
\lim_{t\to \infty} \|u(t, \cdot + x(t)) - Q_{\gamma,c_*}\|_{L^2([-R,+\infty[)} =0 \; .
\end{equation}
Now we observe that this last convergence result together with  \eqref{bound2} ensures that 
$$
\Bigl(u(t) \, ,\, Q_{\gamma,c_*}(\cdot -x(t))\Bigr)_{L^2(]t/2,+\infty[)} \tendsto{t\to +\infty} \|Q_{\gamma,c_*}\|_{L^2_x}^2=\alpha^2 \; .
$$
Combining this  limit with \eqref{deftalpha} we obtain that
\begin{equation}\label{end}
\limsup_{t\to+\infty} \|u(t)-Q_{\gamma,c_*}(\cdot-x(t))\|_{L^2(]t/2,+\infty[)}^2\le 2\alpha^2-2\alpha^2=0 \; .
\end{equation}
To prove the convergence in the $H^1$-level, we use again the Kato smoothing effect this time on $ w(t)=u(t,\cdot+x(t))-Q_{\gamma,c^*} $. Proceeding as above for obtaining \eqref{bound7}, making use this time of \eqref{kj1}, we obtain that for any $ (t_n)\nearrow +\infty  $ and any $ \beta>0 $ there exists a subsequence $(t_{n_k}) $ such that 
\begin{equation}\label{bound77}
\lim_{R\to +\infty} \limsup_{k\to +\infty} \int_{t_{n_k}-\delta}^{t_{n_k}} \int_{-R}^R w_x^2 =0  \; .
\end{equation}
By uniqueness of the possible limit, this holds actually for the whole sequence $ (t_n) $ and thus for  any $ (t_n)\nearrow +\infty  $, any $ R>0 $ and any $ \beta>0 $ there exists $(\tau_n) \in [t_n-\beta,t_n] $ such that 
\begin{equation}\label{bound88}
\lim_{n\to\infty}  \int_{-R}^R w_x(\tau_n)^2=0 \; .
\end{equation}
Combining this with \eqref{bound3} and \eqref{kj1},  we obtain that 
$$
\lim_{n\to \infty}  \int_{-2R}^{+\infty} (w(\tau_n)^2+w_x(\tau_n)^2)=0 \Rightarrow \lim_{n\to \infty} H^{R,r}(u(\tau_n, +x(\tau_n))= H^{R,r}(Q_{\gamma,c^*}) \; .
$$
with $H^{R,r}(\cdot) $  defined in \eqref{defHR}. Now, according to \eqref{finalder} and \eqref{final H1} we get 
$$
\sup_{t\in [\tau_n, \tau_n+\beta]} H^{R,r}(u(t, \cdot+x(t))\le H^{R,r}(u(\tau_n, \cdot+x(\tau_n)) + C \beta \; .
$$
Since this holds for any $\beta>0  $  and ($|t_n- \tau_n|\le \beta  $, $\forall n\in \N$),  it follows that
$$
 \limsup_{n\to \infty} H^{R,r}(u(t_n, +x(t_n))\le H^{R,r}(Q_{\gamma,c^*}) \; .
$$
This inequality together with  the convergence results  \eqref{kj11} and \eqref{kj1} force 
$$
 \limsup_{n\to \infty} \int_{R} u_x(t_n, \cdot+x(t_n))^2 \chi(\frac{\cdot+2R}{R^{3/4}})\le \int_{R}  (\partial_x Q_{\gamma,c^*})^2 \chi(\frac{\cdot+2R}{R^{3/4}})
 $$
This last  estimate combined with \eqref{kj11} ensures that 
$$
\lim_{t\to+\infty} \|u_x(t_n, \cdot+x(t_n))-Q_{\gamma,c^*}\|_{H^1(-R,+\infty)} =0 
$$
and thus, by uniqueness of the possible limit together with \eqref{kj1} 
\begin{equation}\label{kj2}
\lim_{t\to \infty} \|u(t) - Q_{\gamma,c_*}(\cdot-x(t))\|_{H^1(]-R+x(t),+\infty[)} =0 \;.
\end{equation}
To prove that the convergence in $ H^1 $ holds actually in $ [t/2,+\infty[ $, we set 
$$
K^{-R}_{t_0}:=\theta_0 I^{-2R}_{t_0} + J^{-2R}_{t_0}
$$
where we take $\vartheta=\frac{3}{8} $ and $ \theta_0 \ge 4 $ as in \eqref{nono1}.
We thus get 
\begin{align*}
K^{-R}_{t_0}(w(t))& = K^{-R}_{t_0}(u(t,\cdot))+ K^{-R}_{t_0}(Q_{\gamma,c_*}(\cdot-x(t)))+\theta_0 \int_{\R} u(t) Q_{\gamma,c_*}(\cdot-x(t))  \tilde{\Psi}(t)\\
& +\int_{\R}  \partial_x Q_{\gamma,c_*}(\cdot-x(t)) u_x(t)   \tilde{\Psi}(t) -\frac{1}{2} \int_{\R} \Bigl(u^2 Q_{\gamma,c_*}(\cdot-x(t))+u Q_{\gamma,c_*}(\cdot-x(t))^2\Bigr) \tilde{\Psi}(t)\\
&-\frac{\gamma}{2} \int_{\R} \Bigl(u(t) \H  \partial_x Q_{\gamma,c_*}(\cdot-x(t))+
Q_{\gamma,c_*}(\cdot-x(t)) \H u_x\Bigr) \tilde{\Psi}(t)\\
&= K^{-R}_{t_0}(u(t,\cdot))+ K^{-R}_{t_0}(Q_{\gamma,c_*}(\cdot-x(t)))+D(t) \ .
\end{align*} 
According to the convergence results \eqref{kj11} and \eqref{kj2}, we have 
$$
\lim_{t\to +\infty} D(t)= \theta_0  M(Q_{\gamma,c^*})+ E(Q_{\gamma,c^*}) \; .
$$
Let $\beta>0 $. For $ R \ge 1 $  such that $ R^{-1/4}\ll \beta$, Lemma \ref{LemdecayH} leads to
$$
K^{-R}_{t_0}(w(t))\le K^{-R}_{t_0}(u(t_0,\cdot))-(\theta_0 M(Q_{\gamma,c^*})+ E(Q_{\gamma,c^*})) +\beta/4 \; .
$$
Therefore taking $(t_0,R) $ large enough such that 
$$
|K^{-R}_{t_0}(u(t_0,\cdot))-\theta_0 M(Q_{\gamma,c^*})+ E(Q_{\gamma,c^*})|<\beta/8 ,
$$
that is possible thanks to \eqref{kj2}, we get 
$$
K^{-R}_{t_0}(w(t))<3\beta/8 \; .
$$
Since for $ t >t_0 $ large enough it holds  $ x(t_0)-2R+ R^\frac{3}{4} +\frac{3}{8} (t-t_0) <t/2 $, it follows from \eqref{Psit2} and \eqref{nono1} that 
$$
\int_{t/2}^{\infty} (w^2(t) +w_x^2(t))\le \int_{\R} (w^2(t) +w_x^2(t)) \chi\Bigl(\frac{\cdot-(x(t_0)-2R + \frac{3}{8} (t-t_0))}{(R+\frac{1}{8}(t-t_0))^{3/4}}\Bigr) <\beta 
$$
which proves that 
\begin{equation}\label{kj22}
\lim_{t\to \infty} \|u(t) - Q_{\gamma,c_*}(\cdot-x(t))\|_{H^1(]t/2,+\infty[)} =0 \;.
\end{equation}
It remains to prove the convergence of $ \dot{x} $ toward $ c_* $ . Since this can be deduced by classical arguments (see for instance \cite{MM2}), we only sketch the proof here. Setting $w(t)=u(t)-Q_{\gamma,c_*}(\cdot-x(t)) $ and proceeding  as in the end of  the proof of the modulation lemma with \eqref{end} in hands,  we end with 
$$
|\dot{x}(t)-c_*|\Bigl(\|Q_{\gamma,c_*}'\|^2_{L^2} + O(\|w(t)\|_{H^1})\Bigr) = g(t)
$$
with $ g(t) \to 0 $ as $ t\to +\infty $, that yields the desired convergence result. This completes the proof of Theorem \ref{Main} in the case $ c=1 $ and $ |\gamma| < \gamma_0 $. 

 Lastly, since $u$ is a solution of \eqref{MainEq}  with $ \gamma=\beta\in\R $  if and only if $v(t,x) = \lambda^{2} \ u(\lambda^{3}\ t, \lambda \ x)$ is a solution to \eqref{MainEq} with $ \gamma=|\lambda| \beta $, there is  an equivalence between the asymptotic stability of $Q_{\beta,1}$ and the one of $Q_{|\lambda|\beta,\lambda^2}$ . This leads to the  asymptotic stability of $ Q_{\gamma,c} $, with $ \gamma \in \R^* $, as soon as 
$ c>\gamma_0^{-2} \gamma^2 $.

\section{Appendix}
\subsection{Proof of the commutator estimate \eqref{besov1}}
We first prove the result by replacing $ f $ by $f_R= f \, \eta(\cdot/R) $ where $ \eta $  is a smooth even bump function with $ 0\le \eta\le 1 $ on $\R $, $\eta =1$ on  $[-1,1] $  and $ R\ge 1$.  Clearly $f_R\in W^{3,\infty}(\R) \cap L^2(\R) $. 
We denote by $ P_-$ and $ P_+ $ the Fourier projectors on the negative and positive $ x$-frequencies respectively.
Using that $ \overline{P_-(v)}=P_+(\overline{v}) $ for any $ v\in L^2(\R) $ and that we deal with real-valued functions, we may reduce the expression as 
\begin{eqnarray*}
 [ \H, f] v_x 
& =& 2 \Re \Bigl(  i  P_+(f_R v_x) -i  f P_+ v_x \Bigr) \\
& = &2  \Re \Bigl(  i  P_+(f_R P_- v_x) -i  P_-(f_R P_+ v_x) \Bigr) \\
& = & -4 \Im \Bigl(  P_+(f_R P_- v_x) \Bigr) \ .
\end{eqnarray*}
Therefore, 
$$ 
\|\partial_x ( [ \H, f_R] v_x )\|_{L^2_x} \le 4 \|\partial_x   P_+(f_R P_- v_x)\|_{L^2_x} \ .
$$
Now, we make use of the classical homogeneous Littlewood-Paley decompositon of $ L^2$-functions, given by  $f=\sum_{N>0} P_N f $,
where $ N$ lives  in $\{2^k, \; k\in \Z \} $ and $ P_N $ is a smooth Fourier projector on frequencies of order $ N$. 
Using this  decompositon of $ f $ and $ v $, and taking into account the frequency projection, we eventually get 
\begin{eqnarray*}
\| \partial_x P_+(f_R P_- v_x) \|_{L^2_x} & \lesssim & \sum_{N_1>0} \  \sum_{0<N_2\lesssim N_1}
\| \partial_x P_+(P_{N_1} f_R P_- P_{N_2} v_x) \|_{L^2_x} \\ 
& \lesssim & \sum_{N_1>0} \  \sum_{0<N_2\lesssim N_1} N_1 N_2 \|P_{N_1} f_R \|_{L^\infty_x} \|P_{N_2} v \|_{L^2_x} \\
& \lesssim & \sum_{0 < N_1 <1} \  \sum_{0<N_2 \le 1} N_1^{2-\varepsilon/2} N_2^{\varepsilon/2}  \|P_{N_1} f_R \|_{L^\infty_x} \|P_{N_2} v \|_{L^2_x}\\
& & + \sum_{N_1 \geq 1} \ \sum_{0<N_2\lesssim N_1 } N_1^{2+\varepsilon/2} N_2^{-\varepsilon/2}  \|P_{N_1} f_R \|_{L^\infty_x} \|P_{N_2} v \|_{L^2_x}\\
& \lesssim &  \Bigl(  \|D_x^{2-\varepsilon}f_R\|_{L^\infty_x}+\|D_x^{2+\varepsilon}f_R\|_{L^\infty_x}\Bigr) \|v\|_{L^2_x}
\end{eqnarray*}
which proves  \eqref{besov1} for $ f_R $. The result for   $ f\in W^{3,\infty}(\R) $ follows by passing to the limit as $ R\to +\infty $. Indeed, for $ R\ge 1 $ large enough and $ 0\le s\le 3$, it holds that
$ \|f_R\|_{W^{s,\infty}} \lesssim \|f_R\|_{W^{s,\infty}} ,$ and for all $ v\in H^2(\R)$, 
$$
[\H,f_R]v_x=f_R \H v_x -\H(f_R v_x) \tendsto{R\to +\infty}  f \H v_x -\H(f v_x) =[\H,f]v_x \quad \text{in} \quad H^1(\R) \;  $$
which gives the result for $ f\in W^{3,\infty}(\R) $ and $  v\in H^2(\R) $. The result for $v\in L^2(\R) $ follows by density arguments.
\subsection{Proof of Lemma  \ref{weakcontinuity}}
We give a simple proof based on the approach given in \cite{GoubetMolinet}. From classical LWP results, see for instance \cite{linares} or \cite{MV}, the Benjamin equation is globally well-posed in $ H^1(\R) $ with uniqueness in $ L^\infty_{loc} H^1(\R)) $ and even in $ L^\infty_{loc} L^2(\R)) $. Moreover, for any $u_0\in H^1(\R) $, the associated solution $ u $ is bounded in $ H^1(\R) $ with a bound that depends on $ \|u_0\|_{H^1} $. So let $\{u_{0,n}\}_{n\ge 0}\subset H^1(\R) $ such that 
$ u_{0,n} \rightharpoonup u_0 $ in $ H^1(\R) $. From Banach-Steinhaus Theorem, $\{u_{0,n}\} $ is bounded in $ H^1(\R) $ and thus the sequence of emanating solutions $\{u_n\} $ is bounded in $C(\R; H^1(\R))$. In view of the equation \eqref{MainEq} it follows that the sequence $\{\partial_t u_n\} $ is bounded in $L^\infty(\R; H^{-2}(\R)) $. Aubin-Lions' compactness theorem then implies  that, up to a subsequence extraction, $\{u_n\} $ converges strongly in $ L^2_{loc}(\R;L^2_{loc}(\R)) $ to  some function $ v\in L^\infty(\R; H^1(\R))$. This allows us to pass to the limit on $u_n^2$  and ensures that $ v$ is a solution to \eqref{MainEq} belonging to the uniqueness class. Now let $ \phi\in C^\infty_c(\R) $. In view of the  above bound on $\{\partial_t u_n\} $, the family $\{t\mapsto (u_n,\phi)_{H^1}\} $ is bounded and uniformly equi-continuous on $ [-T,T] $ for any $T>0 $. It then follows from Ascoli's Theorem that $\{(u_n,\phi)_{H^1}\}$ converges to $(v,\phi)_{H^1} $ uniformly on $[-T,T] $. In particular, $v(0)=u_0 $ and thus  by the uniqueness result $ v\equiv u $ the unique solution to \eqref{MainEq} emanating from $ u_0 $ that belongs to $ L^\infty_{loc} H^1(\R) $. The uniqueness of the possible limit ensures  that the  above convergence holds actually for the sequence $\{u_n\} $ and not only for a subsequence.
Finally, since $ C^\infty_c(\R) $ is densely embedded in $ H^1(\R)$, this proves that for any $ \phi\in H^1(\R) $ and any $ T>0 $,  
$(u_n,\phi)_{H^1} \to (u,\phi)_{H^1} $ in $ C([-T,T] $ which is the desired convergence result. 
\subsection{Proof of Lemma \ref{LemdecayH}}
According to Proposition \ref{H1Monot} for $ t\ge t_0 $, it holds 
\begin{align*} 
\int_{\R}  \Bigl(\frac{1}{2} u_x^2 +\frac{\delta_0}{2} u^2- \frac{\gamma}{2} u \H u_x & - \frac{1}{6} \ u^3  \Bigr)(t) \tilde{\Psi}(t) \\ 
&\le \underbrace{\int_{\R} \Bigl(\frac{1}{2} u_x^2 +\frac{\delta_0}{2} u^2- \frac{\gamma}{2} u \H u_x \  - \frac{1}{6} \ u^3  \Bigr)(t_0) \tilde{\Psi}(t_0)}_{=H^{R,r}(u(t_0, \cdot+x(t_0))}  + C \, R^{-1/4} 
\end{align*}
where $ \tilde{\Psi} $ is defined in \eqref{defPsi2}. It thus suffices to prove that 
\begin{align}
\int_{\R}  \Bigl(\frac{1}{2} u_x^2  +&\frac{\delta_0}{2} u^2- \frac{\gamma}{2} u \H u_x  - \frac{1}{6} \ u^3  \Bigr)(t) \tilde{\Psi}(t)\nonumber \\
& \ge \int_{\R}  \Bigl(\frac{1}{2} u_x^2  +\frac{\delta_0}{2} u^2- \frac{\gamma}{2} u \H u_x  - \frac{1}{6} \ u^3  \Bigr)(t) \chi\Bigl(\frac{\cdot-(x(t)-2R)}{R^{3/4}}\Bigr)-C R^{-3/4} \; . \label{frG1}
 \end{align}
Let us start by proving that for $ R\ge 1$ and $t\ge t_0 $,  
\begin{equation} \label{frG2}
  \tilde{\Psi}(t,\cdot)= \chi \Bigl(\frac{\cdot -(x(t_0)-2R+\vartheta (t-t_0))}{(R+\frac{1}{8}(t-t_0))^{3/4}} \Bigr)  \ge \chi\Bigl(\frac{\cdot-(x(t)-2R)}{R^{3/4}}\Bigr) \quad \text{on} \; \mathbb{R} \; .
  \end{equation}
  We recall that $ \chi $ is  a non increasing function   with $\chi =0 $ on $\mathbb{R}_- $ and $ \chi=1 $ on$ [1,+\infty[ $. In particular, $ \chi(\frac{x-(x(t)-2R)}{R^{3/4}})=0 $ for $ x\le x(t) -2R $.  In addition, we have $ \tilde{\Psi}(t,x)=1 $  for $ x\ge x(t)-2R+R^{3/4} $ since, for $ R\ge 1 $, 
  \begin{align*}
  x-(x(t_0)-2R+\vartheta (t-t_0))&= x-(x(t)-2R+\vartheta (t-t_0)+(x(t_0)-x(t)) \\
   &\ge R^{3/4} +\frac{1}{8} (t-t_0)\ge  (R +\frac{1}{8} (t-t_0))^{3/4} \; ,
 \end{align*}
 where in the next to the last step we used the fact that  
 $ \theta (t_0-t)+(x(t_0)-x(t))\le -\frac{1}{8} (t-t_0)$ since $ \vartheta\le 5/8 $ and $ \dot{x}\ge 5/6 $. Therefore, it suffices to check that for $ x\in [x(t)-2R, x(t)-2R+R^{3/4}] $, 
 $$
 \frac{x-(x(t_0)-2R+\vartheta (t-t_0))}{(R +\frac{1}{8} (t-t_0))^{3/4}} \ge \frac{x-(x(t)-2R)}{R^{3/4}} \ .
 $$
 \begin{equation}\label{fr}
 \Leftrightarrow 
\Bigl(  x-(x(t_0)-2R+\vartheta (t-t_0))\Bigr) R^{3/4} \ge (x-(x(t)-2R))(R +\frac{1}{8} (t-t_0))^{3/4}
 \end{equation}
For this we notice that on one hand, proceeding as above, 
$$
\Bigl(  x-(x(t_0)-2R+\vartheta (t-t_0))\Bigr) R^{3/4}  \ge \Bigl( x-x(t)+2R +\frac{1}{8} (t-t_0) \Bigr) R^{3/4} 
$$
whereas on  the other hand, for $ x\le x(t)-2R+R^{3/4}$, 
 \begin{align*}
 (x-(x(t)-2R))(R +& \frac{1}{8} (t-t_0))^{3/4}\le  (x-(x(t)-2R))\Bigl(R^{3/4}  +\frac{1}{8} (t-t_0) \Bigr)\\
 & \le  (x-x(t)+2R)R^{3/4} +  \frac{R^{3/4}}{8} (t-t_0) \:.
 \end{align*}
Combining these last two estimates, \eqref{fr} follows which thus completes the proof of \eqref{frG2}. Now, we notice that \eqref{frG1} will follow directly from \eqref{frG2} if we could check that 
$$
\frac{1}{2} u_x^2 +\frac{\delta_0}{2} u^2- \frac{\gamma}{2} u \H u_x - \frac{1}{6} \ u^3  \ge 0 \quad \text{on} \quad \R \; .
$$
However, this inequality is not clear because of the presence of the non-local operator $ \H $. To overcome this difficulty, we first use the same trick as in \eqref{estgg} to rewrite  $\int_{\R} \tilde{\Psi} u_x^2 $ in the following way :
\begin{align}
\int_{\R}  \tilde{\Psi} u_x^2  & = \int_{\R} (\sqrt{ \tilde{\Psi}} u_x)(\sqrt{ \tilde{\Psi}}  u_x)= \int_{\R} \H(\sqrt{ \tilde{\Psi}} u_x ) \H(\sqrt{ \tilde{\Psi}} u_x)  \nonumber \\
& =\int_{\R}  \tilde{\Psi} (\H u_x)^2+2 \int_{\R} \sqrt{ \tilde{\Psi}} \H u_x [\H, \sqrt{ \tilde{\Psi}}]u_x +  \int_{\R} ( [\H, \sqrt{ \tilde{\Psi}}]u_x)^2 \; .
\label{coma}
\end{align} 
Noticing that  \eqref{comut} together with \eqref{deriv2} ensure that   $\|[\sqrt{\tilde{\Psi}}, \H] u_x\|_{L^2}\lesssim R^{-3/4}\|u_x\|_{L^2}  $, we thus get \begin{align}
&\int_{\R}  \Bigl(\frac{1}{2} u_x^2  +\frac{\delta_0}{2} u^2- \frac{\gamma}{2} u \H u_x  - \frac{1}{6} \ u^3  \Bigr)(t)  \tilde{\Psi}(t)\nonumber \\
&=  \int_{\R}  \Bigl(\frac{1}{2} (\H u_x)^2  +\frac{\delta_0}{2} u^2- \frac{\gamma}{2} u \H u_x  - \frac{1}{6} \ u^3  \Bigr)(t)\tilde{\Psi}(t)+ \|u\|_{H^1}^2 O (R^{-3/4}) \;\label{coco} .\end{align}
Now since $\|u\|_{L^\infty}\le  \|u\|_{H^1} \lesssim 1 $, by Young inequality it is direct to see  that for $ \delta_0 \ge 4 $ large enough, 
$$
\frac{1}{2} (\H u_x)^2  +\frac{\delta_0}{2} u^2- \frac{\gamma}{2} u \H u_x  - \frac{1}{6} \ u^3\ge 0 \quad \text{on} \quad \R
$$
and thus according to  \eqref{frG2}, \eqref{coma} and \eqref{coco},  we have the following chain of inequalities 
\begin{align*}
&\int_{\R}  \Bigl(\frac{1}{2} u_x^2  +\frac{\delta_0}{2} u^2- \frac{\gamma}{2} u \H u_x  - \frac{1}{6} \ u^3  \Bigr)(t)  \tilde{\Psi}(t)\\
&=\int_{\R}  \Bigl(\frac{1}{2} (\H u_x)^2  +\frac{\delta_0}{2} u^2- \frac{\gamma}{2} u \H u_x  - \frac{1}{6} \ u^3  \Bigr)(t)  \tilde{\Psi}(t) + O (R^{-3/4})\\
&\ge   \int_{\R}  \Bigl(\frac{1}{2} (\H u_x)^2  +\frac{\delta_0}{2} u^2- \frac{\gamma}{2} u \H u_x  - \frac{1}{6} \ u^3  \Bigr)(t)
 \chi\Bigl(\frac{\cdot-(x(t)-2R)}{R^{3/4}}\Bigr) +  O (R^{-3/4}) \\
 &\ge   \int_{\R}  \Bigl(\frac{1}{2} u_x^2  +\frac{\delta_0}{2} u^2- \frac{\gamma}{2} u \H u_x  - \frac{1}{6} \ u^3  \Bigr)(t)
 \chi\Bigl(\frac{\cdot-(x(t)-2R)}{R^{3/4}}\Bigr) +  O (R^{-3/4})  \end{align*}
 that proves \eqref{frG1} and completes the proof of the lemma.


\begin{thebibliography}{10}
\bibitem {ADM1} {M. Abdallah, M. Darwich and L. Molinet,}
{\it On the Uniqueness and Orbital Stability of Slow and Fast Solitary Wave Solutions of the Benjamin Equation}, arXiv e-prints (2024): 2404.04711.


\bibitem {Albert 1999} {J. P. Albert,
{\it{Concentration compactness and the stability of solitary-wave solutions to nonlocal equations,}}   Contemporary Mathematics {\bf{221}} (1999), 1--30.}

\bibitem {ABR}{ J. P. Albert, J. L. Bona, and J.M. Restrepo}, {\it Solitary-wave solutions of the Benjamin equation,} SIAM J. Appl. Math.,
{\bf{59}} (1999), 2139--2161.

\bibitem {Albert Bona Saut 1961} {J. P. Albert, J. L. Bona, and J. C. Saut,
{\it{Model equations for waves in stratified fluids,}}
Proc. Royal Soc. London A, {\bf{453}} (1997), 1233--1260.}


\bibitem{Amick Toland 1991} {C.J. Amick, and J.F. Toland, 
{\it{Uniqueness and related analytic properties for the Benjamin-Ono equation—a nonlinear Neumann problem in the plane}.}
 Acta Math. {\bf{167}} (1991), 107--126.}




\bibitem {Angulo} {J. Angulo,
{\it{Existence and Stability of Solitary Wave Solutions of the Benjamin Equation,}}
J. Differ. Equ. {\bf{52}} (1999), 136--159.}


\bibitem {Benjamin 1992} T.B Benjamin, {\it A new kind of solitary wave}, J. Fluid Mech. {\bf 245} (1992), 401--411.


\bibitem {Benjamin 1996} T.B Benjamin, {\it Solitary and periodic waves of a new kind,}, Phil. Trans. Royal Soc. London
A {\bf 354} (1996), 1775-1806.


\bibitem{Benett}{ D.P. Bennett, R.W.  Brown,S.E. Stansfield, J.D. Stroughair,J.D 
and J.L. Bona, {\it{The stability of internal solitary waves.}} Math. Proc. Cambridge Philos. Soc. {\bf 94} (1983), 351--379.}


\bibitem{Berestycki Lions 1983} {H. Berestycki and P.-L. Lions, 
{\it{Nonlinear scalar field equations. I. Existence of a ground state,}}
 Arch. Rational Mech. Anal. {\bf{82}} (1983), 313--345.}

\bibitem{Bona et al 1987} {L. Bona, P. E. Souganidis, and W. A. Strauss, 
{\it{Stability and instability of solitary waves of Korteweg-de Vries type,}} 
 Proc. R. Soc. London A, {\bf{411}} (1987), 395--412.}
 
\bibitem{bonali} {L. Bona and Y.  Li, 
{\it{Decay and analyticity  of solitary waves,}} 
 J. Math. Pures Appl. , {\bf{76}} (1997), 377--430.}

 \bibitem{bonachen} {L. Bona and H. Chen, 
{\it Existence and asymptotic properties  of solitary wave solutions of Benjamin-type equations,}} 
Adv. Diff. Equ. {\bf{3}} (1998), 51--84.

\bibitem{Cote et al 2016} {R. Côte, C. Muñoz, D. Pilod, G. Simpson,
{\it{Asymptotic stability of high-dimensional Zakharov–Kuznetsov solitons},}
 Arch. Ration. Mech. Anal. {\bf{220 (2)}} (2016) 639--710.}
\bibitem{GoubetMolinet}
{O. Goubet and L. Molinet,
{\it{Global attractor for weakly damped nonlinear Schr\"odinger equations
in $L^2(\R) $,}}
Nonlinear Analysis, {\bf{71}} (2009) 317--320.}
\bibitem{Gustafson et al. 2009} {S. Gustafson, H. Takaoka, and T.P. Tsai., 
{\it{Stability in $H^{\frac{1}{2}}$ of the Sum of K Solitons for the Benjamin–Ono Equation,}}
 Journal of Mathematical Physics {\bf{50}} (2009).}
\bibitem {HSS24} {S. Hakkaev, M. Stanislavova and A. Stefanov,
 {\it{Existence and  stability for the travelling waves of Benjamin equation,}}
arXiv e-prints (2024): 2408.16715.}


\bibitem {Kabakouala Molinet 2018} {A. Kabakouala, and L. Molinet,
 {\it{On the stability of the solitary waves to the (generalized) Kawahara equation,}}
 J. Math. Anal. Appl. {\bf{457}} (2018), 478--497.}

\bibitem {Kenig Ponce Vega 1991} {C. E. Kenig, G. Ponce, and L. Vega, 
{\it{Well-posedness of the initial value problem for the KdV equation,}}
 J. American Math. Soc. {\textbf 4} (1991), 323--347.}
 

\bibitem{Kenig Martel 2009} {C. E. Kenig  and Y. Martel,
{\it{Asymptotic stability of solitons for the Benjamin-Ono equation}}.
 Revista Matematica Iberoamericana {\bf{25}} (2009), 909--970. }


\bibitem {Kenig Martel Robbiano 2011} {C. E. Kenig, Y. Martel, and L. Robbiano, 
{\it{Local well-posedness and blow-up in the energy space for a class of ${L}^{2} $ critical dispersion generalized Benjamin–Ono equations},}
 Ann. Inst. H. Poincar\'e Anal. Non Lin\'eaire, {\bf{28}} (2011), 853--887.}

\bibitem {Klein et al 2023} {
C. Klein, F. Linares, D. Pilod, and J-C. Saut,
{\it{On the Benjamin and related equations,}}
 arXiv e-prints (2023): 2312.00152 }
 


\bibitem{linares} F. Linares, {\it $L^2$-Global well-posedness of the initial value problem associated to the Benjamin equation,} JDE {\bf 152} (1999), 377--393.

\bibitem{Maris}
M. Mari{\c{s}}.
{\it Global branches of travelling-waves to a
  {G}ross-{P}itaevskii-{S}chr\"odinger system in one dimension}, 
 SIAM J. Math. Anal., {\bf 37} (2006), 1535--1559.



\bibitem{Martel 2006}{Y. Martel,
{\it  Linear problems related to asymptotic stability of solitons of the generalized KdV equations},
 SIAM journal on mathematical analysis {\bf{38}} (2006), 759--781.}

\bibitem{MM1}{ Y. Martel and F. Merle}
 Asymptotic stability of solitons for subcritical generalized KdV equations. 
 {\em Arch. Ration. Mech. Anal. {\bf 157} (2001), no. 3, 219-254.}
 
\bibitem{MM2}{ Y. Martel and F. Merle}
{\it Asymptotic stability of solitons of the gKdV equations with general nonlinearity },
 Math. Ann. {\bf 341} (2008), no. 2, 391-427.


\bibitem{MMT}{Y. Martel, F. Merle and T.-P. Tsai, {\it Stability in $H^1$ of the sum of K solitary waves for some nonlinear Schr\"odinger equations},  Duke Math. J.,  {\bf133} (2006), 405-466.}

\bibitem {MV} {L. Molinet and S. Vento, 
{\it{Improvement of the energy method for strongly non resonant dispersive equations and applications,}}
 Anal. PDE {\bf 8} (2015), no. 6, 1455--1495.}

\bibitem{L}{L. Molinet} {\it  A Liouville  property with application to   asymptotic stability for the Camassa-Holm equation}, 
 Arch. Ration. Mech. Anal. {\bf 230} (2018), 185-230.


\bibitem {Weinstein87}{ M.I. Weinstein, {\it Existence and dynamic stability of solitary wave solutions of equations arising in long wave propagation.} Comm. Partial Differ. Equ.  {\bf 12} (1987), 1133--1173.}

\end{thebibliography}
\end{document}